\documentclass[a4paper,11pt]{article}

\usepackage[latin1]{inputenc}
\usepackage[T1]{fontenc}
\usepackage[british,UKenglish,USenglish,american]{babel}
\usepackage{amsmath}
\usepackage{amsfonts}
\usepackage{hyperref}
\usepackage[T1]{fontenc}
\usepackage[latin1]{inputenc}
\usepackage{theorem}
\usepackage{graphics}
\usepackage{makeidx}
\usepackage{fancyhdr}
\usepackage{hyperref}
\usepackage{verbatim}
\usepackage{bbm}
\usepackage{bm}
\usepackage{amssymb}
\usepackage{color}
\numberwithin{equation}{section}
\setlength{\oddsidemargin}{0.40cm} \setlength{\evensidemargin}{0.40cm}
\textwidth 16cm
\usepackage{amssymb}

\newcommand{\tim}[1]{{#1}}

\newcommand{\be}{\begin{equation}}
\newcommand{\ee}{\end{equation}} 
\newcommand{\benn}{\begin{equation*}}
\newcommand{\eenn}{\end{equation*}}
\newcommand{\bea}{\begin{eqnarray}}
\newcommand{\eea}{\end{eqnarray}}

\newcommand{\beann}{\begin{eqnarray*}}
\newcommand{\eeann}{\end{eqnarray*}}

\newtheorem{theorem}{Theorem}[section]

\newtheorem{corollary}[theorem]{Corollary}
\newtheorem{lemma}[theorem]{Lemma}
\newcommand{\xx}{\mathbb{X}}

\newtheorem{definition}[theorem]{Definition}
\theorembodyfont{\rm}

\newtheorem{remark}[theorem]{Remark}
\newtheorem{example}[theorem]{Example}
\newtheorem{assumptions}[theorem]{Assumptions}

\newcommand{\qed}{\hfill $\Box$\smallskip}

\parindent0.5pt
\newcommand{\X}{\mathbf{X}}
\newcommand{\XX}{\mathbb{X}}

\def\R{\mathbb{R}}

\def\N{\mathbb{N}}

\def\cA{\mathcal{A}}
\def\cB{\mathcal{B}}
\def\cC{\mathcal{C}}
\def\cD{\mathcal{D}}

\def\cF{\mathcal{F}}

\def\cI{\mathcal{I}}

\def\cL{\mathcal{L}}

\def\cO{\mathcal{O}}
\def\cP{\mathcal{P}}

\def\cR{\mathcal{R}}

\def\cX{\mathcal{X}}


\def\txtd{{\textnormal{d}}}


\def\Id{{\textnormal{Id}}}
\newcommand{\gubnorm}[2]{\left\|#1,#2 \right\|_{X,2\gamma,\alpha}}
\newcommand{\gubnormpar}[3]{\left\|#1,#2 \right\|_{X,2\gamma,#3}}
\newcommand{\norm}[1]{\left\lVert #1 \right\rVert}
\newcommand{\abs}[1]{\left| #1 \right|}

\newcommand*\samethanks[1][\value{footnote}]{\footnotemark[#1]}

\sloppy

\allowdisplaybreaks

\title{Stochastic evolution equations with rough boundary noise}

\author{Alexandra Neam\c tu\thanks{University of Konstanz, Department of Mathematics and Statistics,  Universit\"atsstra\ss{}e~10 78464 Konstanz, Germany. E-Mail: alexandra.neamtu@uni-konstanz.de, tim.seitz@uni-konstanz.de}
~~and~~Tim Seitz\samethanks
}
\begin{document}
    
    \maketitle
    
    \begin{abstract}
        We investigate the pathwise well-posedness of stochastic partial differential equations perturbed by multiplicative Neumann boundary noise, such as fractional Brownian motion for $H\in(1/3,1/2]$. Combining functional analytic tools with the controlled rough path approach, we establish global existence of solutions and flows for such equations. For Dirichlet boundary noise we obtain similar results for smoother noise, i.e.~in the Young regime.
    \end{abstract}
    {\bf Keywords}: stochastic partial differential equations, controlled rough paths, extrapolation operators, Neumann boundary noise.\\
    {\bf Mathematics Subject Classification (2020)}: 60G22, 60L20, 
    60L50, 
    37H05, 37L55. 
    \section{Introduction}
        We investigate the semilinear parabolic evolution equation with nonlinear rough boundary noise given by
        \begin{align}\label{eq:intro}
            \begin{cases}
                \frac{\partial}{\partial t} y = \cA y +  f(y) & \text{ in } \cO,\\
                \cC y = F(y)~\frac{\txtd}{\txtd t} \X & \text{ on } \partial \cO,\\
                y(0)=y_0.
            \end{cases}
        \end{align}
        where $\cO\subset \R^d$ is a bounded domain with $C^\infty$-boundary, $\X$ is a $\gamma$-H\"older rough path with $\gamma\in(\frac{1}{3},\frac{1}{2}]$, $\cA$ is a second order differential operator in divergence form with Neumann boundary conditions $\cC$ as specified in \eqref{formalOp}, $y_0$ is the initial data and $f$ and $F$ are nonlinear terms. Consequently, the theory developed in this article is applicable to the fractional Brownian motion with Hurst parameter $H\in(\frac{1}{3},\frac{1}{2}]$. For $H=\frac{1}{2}$ we recover the results for the Brownian motion in \cite{VeraarSchnaubelt}.
        Setting for now the drift term $f=0$, we note that the solution of~\eqref{eq:intro} should be given by
        \begin{align}\label{eq:RoughConv}
            y_t = S_t y_0+ A\int_0^t S_{t-r} NF(y_r)~\txtd \X_r,
        \end{align}
        where $A$ will be the $L^p$-realization of $\mathcal{A}$ which generates an analytic semigroup $(S_t)_{t\in[0,\infty)}$ on a suitable scale of Banach spaces. Furthermore, $N$ is the Neumann operator, which maps the boundary data into the interior of the domain. 
        The derivation of~\eqref{eq:RoughConv} was established in~\cite{daprato,m} for additive noise.
        One of the main goals of this work is to give a meaning to the convolution~\eqref{eq:RoughConv} using a controlled rough path approach and construct pathwise solutions for~\eqref{eq:intro}. 
        Our approach is applicable only to Neumann boundary noise for $\gamma\in(\frac{1}{3},\frac{1}{2}]$. The Dirichlet case can be included in the Young regime, provided that the H\"older regularity of the noise satisfies an additional condition. This fact is not  surprising, since it is well-known that mild solutions for stochastic evolution equations with Brownian Dirichlet boundary noise generally fail to exist~\cite{daprato,VeraarSchnaubelt}.  Using different techniques and tools from Malliavin calculus, we mention that Dirichlet boundary noise can be treated for a Brownian  motion, see \cite{alos}.
        However, one can construct solutions for Dirichlet boundary noise, if the random input is given by a fractional Brownian motion where the Hurst index $H>\frac{3}{4}$~\cite{DuncanMaslowski}. Furthermore, one can show well-posedness of~\eqref{eq:intro} driven by an additive fractional Brownian motion if  $H\in(\frac{1}{4},1)$ as obtained in~\cite[Section 5]{DuncanMaslowski} and~\cite{DuncanMaslowski2}.
        We recover these thresholds for the Hurst parameters in the more general framework of rough path theory, which allows us to consider multiplicative noise in contrast to~\cite{CMO,debussche,DuncanMaslowski, DuncanMaslowski2,m}. Results on stochastic evolution equations of the type~\eqref{eq:intro} with multiplicative noise can be looked up in~\cite{Maslowski,VeraarSchnaubelt}. Here, the random input $\X$ is an infinite-dimensional Brownian motion. To our best knowledge, for fractional Brownian motion, only the additive case was considered in~\cite{CMO,DuncanMaslowski, DuncanMaslowski2}. Results for SPDEs with boundary L\'evy noise have been obtained in~\cite{BP,HP}.\\
        On the other hand, there has been a growing interest in developing a solution theory based on a semigroup approach for rough evolution equations starting with~\cite{Gubinelli,GubinelliTindel} and the more recent approaches~\cite{FrizHairer,GHairer, GHN, HN19}. However, most of the results are stated on a torus and \tim{seem to be only} applicable to rough PDEs with zero Dirichlet or Neumann boundary conditions. To our best knowledge, there are no works that deal with more complicated boundary conditions (nonlinear or dynamic) or with boundary noise using rough paths techniques. Here we contribute to this aspect and provide a solution theory for~\eqref{eq:intro} with Neumann boundary noise. Our approach relies on controlled rough paths on a monotone scale of interpolation spaces introduced in~\cite{GHN}. One major difficulty in our setting is that we have to deal with controlled rough paths belonging to two different scales of Banach spaces:~one for the solution and one for the boundary data.~However, the main technical challenge is to obtain enough spatial regularity  in order to guarantee that~\eqref{eq:RoughConv} belongs to the domain of $A$ and to investigate its Gubinelli derivative.
It is well-known that~\cite{GHairer,GHN,GubinelliTindel, HN19} there is a trade-off between space and time regularity required in order to define the standard convolution $\int_0^t S_{t-r}F(y_r)~\txtd{\bf X}_r$ using pathwise arguments. In particular the nonlinear term $F$ has to improve the spatial regularity as in~\cite{HN19,Maria} or is allowed to lose spatial regularity which is strictly less than the time regularity of the noise~\cite{GHairer,GHN}. In particular for the Brownian motion, which is $1/2-$ H\"older regular, 
this means that
it is not yet possible to deal with transport-type noise in the mild formulation~\cite{GHairer, GHN} as opposed to the It\^o calculus.~However, there are numerous advantages to consider a rough path formulation for~\eqref{eq:intro} such as a continuous dependence of the solution with respect to the random input and the existence of random dynamical systems, which we discuss in Section~\ref{sec:rds}.~In our setting, we have to incorporate the Neumann operator in the rough convolution, compute a Gubinelli derivative and make sure that~\eqref{eq:RoughConv} belongs to the domain of $A$. 
We can deal with these issues by introducing a suitable extrapolation operator which entails a convenient representation of the mild solution.\\
The idea of working with extrapolation operators in the context of rough path theory is new, and the results obtained are of independent interest. In the works~\cite{DuncanMaslowski, DuncanMaslowski2} due to the presence of an additive fractional noise on the boundary, the application of extrapolation operators is not required. In order to deal with multiplicative Brownian noise on the boundary, extrapolation operators have been introduced in~\cite{VeraarSchnaubelt}. Similar to~\cite{VeraarSchnaubelt}, the main idea is to rewrite~\eqref{eq:intro} as a semilinear evolution equation without boundary noise using extrapolation operators. The novel aspect of this work is to analyze the interaction between extrapolation operators, controlled rough paths and rough integrals which is necessary in order to investigate the well-poosedness of~\eqref{eq:intro}. \\
        Finally, we mention that the motivation of incorporating boundary noise arises in the study of transport models under random sources~\cite{BDS,Wang}. In this framework, the recent work~\cite{ns} considers a variant of the 3D Navier-Stokes equations subject to stochastic wind driven boundary conditions which are modelled by an additive cylindrical Brownian motion. 
        Furthermore, beyond the well-posedness theory mentioned above, optimal control results for SPDEs with boundary noise have been obtained in~\cite{bt,debussche}, whereas numerical aspects have been discussed in~\cite{dirk}. Averaging principles and fluctuations around the averaged equation for such equations have been derived in~\cite{Cerrai}.\\
        
        This work is structured as follows.~In Section~\ref{preliminaries} we collect fundamental results from the controlled rough path approach according to a monotone interpolation family as introduced in~\cite{GHN}. 
        In our case, we have to work with two different scales of interpolation spaces in order to deal with the boundary data. More precisely, the boundary data belongs to the Besov scale, whereas the solution is expected to belong to the Bessel potential scale according to the boundary conditions $\cC u =0$. We specify these function spaces in Section~\ref{preliminaries} and further provide a background on extrapolation operators. Section~\ref{main} contains the main results of this work. We analyse the interplay between the extrapolation operators and controlled rough paths. Based on this, we construct pathwise (local- and global-in-time) solutions for~\eqref{eq:intro} in Theorem~\ref{local} and Theorem~\ref{thm:global}. Moreover, we also establish in Theorem~\ref{globalDirichlet} a well-posedness result for Dirichlet boundary noise if $H\in(\frac{3}{4},1)$ using Young's integral.
        Section~\ref{sec:rds} contains a direct application of our global well-posedness result. Namely, we establish in Theorem~\ref{rds} the existence of random dynamical systems associated to~\eqref{eq:intro}. Its long-time behavior will be addressed in a future work. For example, it is known that white noise on the boundary can have a stabilization effect~\cite{Stefanie, Munteanu}. However, such results are not available for fractional noise in the context of the rough path approach. 
        Finally, we conclude with some applications of our theory in Section~\ref{examples}. \\
        
        {\bf Acknowledgements.} We thank Robert Denk for very helpful discussions on extrapolation spaces. We are grateful to the referees for the numerous valuable comments.

    \section{Preliminaries}\label{preliminaries}
        In this section we collect  basic results on rough paths and rough convolutions for semilinear parabolic problems~\cite{GHN} and 
        provide the assumptions on the coefficients of~\eqref{eq:intro}. Furthermore, we explain the concept of extrapolation spaces and operators.
        
        We first specify the type of noise we consider. The random input is a $d$-dimensional $\gamma$-H\"older rough path   $\textbf{X}:=(X,\mathbb{X})$, for $\gamma\in(1/3,1/2]$ and $X_0=0$. Here we assume without loss of generality that $d=1$, since the generalization to $d>1$ can be made componentwise.
        More precisely, we have
        \begin{align*}
            X\in C^{\gamma}([0,T];\mathbb{R}) ~~\mbox{ and } ~~ \mathbb{X}\in C^{2\gamma}([0,T]^2;\mathbb{R}\otimes\mathbb{R})
        \end{align*}
        and the connection between $X$ and $\mathbb{X}$ is given by Chen's relation
        \begin{align}\label{chen}
            \mathbb{X}_{t,s}- \mathbb{X}_{u,s}-\mathbb{X}_{t,u}=X_{u,s}\otimes X_{t,u},
        \end{align}
        where we write $X_{u,s}:=X_u-X_s$ for any path.
        The term $\mathbb{X}$ is sometimes referred to in the literature as a second order process. We further introduce an appropriate distance between two $\gamma$-H\"older rough paths.
        \begin{definition}
            Let $J\subset\mathbb{R}$ be a compact interval, $\Delta_J:=\{ (s,t)\in J \times J : s\leq t \}$ and $\mathbf{X}=(X,\mathbb{X})$ and $\mathbf{\tilde{X}}=(\tilde{X},\tilde{\mathbb{X}})$ be two $\gamma$-H\"older rough paths. We introduce the $\gamma$-H\"older rough path (inhomogeneous) metric
            \begin{align}\label{rp:metric}
                d_{\gamma,J}(\mathbf{X},\mathbf{\tilde{X}} )
                := \sup\limits_{(s,t)\in \Delta_J} \frac{|X_{t,s}-\tilde{X}_{t,s}|}{|t-s|^{\gamma}}
                + \sup\limits_{(s,t) \in \Delta_{J}}
                \frac{|\mathbb{X}_{t,s}-\tilde{\mathbb{X}}_{t,s}|} {|t-s|^{2\gamma}}.
            \end{align}
            We set $\rho_\gamma(\mathbf{X}):=d_{\gamma,[0,T]}(\mathbf{X},0)$.
        \end{definition}
        We specify the necessary assumptions on the linear part of~\eqref{eq:intro}. For concrete examples of operators satisfying such properties, see Section~\ref{examples}.~Let $A$ be the $L^p(\cO)$-realization of $\mathcal{A}$ with respect to the Neumann boundary conditions.~Therefore, its domain is given by $D(A):=\{u\in H^{2,p}(\cO): \cC u=0\}$. \\
        
Throughout this manuscript we make the following assumptions. 
        \begin{assumptions}\label{ass}
            \begin{itemize}
                \item [1)] The boundary value problem $(\mathcal{A},\cC)$ is normally elliptic. 
                \item [2)] The operator $A$ generates an analytic semigroup $(S_t)_{t\in [0,\infty)}$. Furthermore, assume that $A$ has bounded imaginary powers, which means that there exists a constant $C > 0$ such that 
                \begin{align*}
                    \norm{A^{is}}_{\cL(L^p(\cO))} \leq C
                \end{align*}
            for all $s\in \R$ with $\abs{s}\leq 1$.
            \end{itemize} 
        \end{assumptions}
        
        In order to deal with the boundary data of~\eqref{eq:intro} we further need to introduce the solution operator for an abstract boundary value problem. To this aim, we consider for $p\in [2,3]$ and $\alpha>1+\frac{1}{p}$ the normally elliptic boundary value problem
        \begin{align}\label{eq:bvp}
            \begin{cases}
            \cA u = 0 & \text{ in } \cO,\\
            \cC u = g & \text{ on } \partial \cO.
            \end{cases}
        \end{align}
        It is well-known, see for example \cite[Theorem 9.2]{Amann}, that the problem \eqref{eq:bvp} has for $g\in B^{\alpha-1-\frac{1}{p}}_{p,p}(\partial \cO)$, a unique strong solution in $H^{\alpha,p}(\cO)$. The corresponding solution operator $$N:B^{\alpha-1-\frac{1}{p}}_{p,p}(\partial\cO)\to H^{\alpha,p}(\cO)$$ is also called the Neumann operator. 
        In particular, the Neumann operator is bounded between these two function spaces. Here, we immediately see that we need two scales of Banach spaces (as specified in Definition~\ref{raum}) for the solution defined on the domain $\cO$ and for the boundary data (which lies on $\partial\cO$), in order to investigate~\eqref{eq:intro}. These are given by the Besov spaces $B^{\alpha-1-\frac{1}{p}}_{p,p}(\partial\cO)$ and the Bessel potential scale $H^{\alpha,p}(\cO)$ generated by the operator $A$, see Definition \ref{def:banachScale}. In the following we introduce the necessary properties of these space required in this work and refer to \cite[Section 4.2]{Triebel78} for further details.
        \begin{definition}\label{raum}
            A family of separable Banach spaces $(\cB_\alpha,\abs{\cdot}_{\cB_{\alpha}})_{\alpha\in\mathbb{R}}$ is called a monotone family of interpolation spaces if for $\alpha_1\leq \alpha_2$, there is a dense and continuous embedding $\cB_{\alpha_2}\hookrightarrow \cB_{\alpha_1}$  and the following interpolation inequality holds for $\alpha_1\leq \alpha_2\leq \alpha_3$ and $x\in  \cB_{\alpha_3}$:
            \begin{align}\label{interpolation:ineq}
                |x|^{\alpha_3-\alpha_1}_{\cB_{\alpha_2}} \lesssim |x|^{\alpha_3-\alpha_2}_{\cB_{\alpha_1}} |x|^{\alpha_2-\alpha_1}_{\cB_{\alpha_3}}.
            \end{align}
        \end{definition}
        \begin{remark}
            For example, the scale $(B^\alpha_{p,p}(\partial \cO))_{\alpha\in \R}$ satisfies Definition \ref{raum}. The interpolation inequality follows from the fact that 
            \begin{align*}
                \left[B^{\alpha_1}_{p,p}(\partial \cO),B^{\alpha_2}_{p,p}(\partial \cO)\right]_{\vartheta}=B^\alpha_{p,p}(\partial \cO)
            \end{align*}
            for $\alpha_1< \alpha_2$ and $\alpha=(1-\vartheta)\alpha_1+\vartheta \alpha_2$ for $\vartheta\in (0,1)$. Since complex interpolation is exact, the inequality \eqref{interpolation:ineq} is satisfied. A further example is given by the Bessel potential scale, which is introduced in Section~\ref{main}.
        \end{remark}
        The main advantage of this approach is that we can view the semigroup $(S_t)_{t\in [0,\infty)}$ generated by $A$ as a linear mapping between these interpolation spaces and obtain the following standard bounds for the corresponding operator norms. If $S:[0,T]\to \cL(\cB_{\alpha},\cB_{ \alpha+1})$ is such that for every $x\in \cB_{\alpha+1}$ and $t\in(0,T]$ we have that $|(S_t-\Id)x|_{\cB_{\alpha}} \lesssim t|x|_{\cB_{\alpha+1}}$ and $|S_tx|_{\cB_{\alpha+1}}\lesssim t^{-1} |x|_{\cB_{\alpha}}$, then for every $\sigma\in[0,1]$ we have that $S_t\in\cL(\cB_{\alpha+\sigma})$ and 
        \begin{align}
            |(S_t-\Id) x|_{\cB_{\alpha}}&\lesssim t^\sigma |x|_{\cB_{\alpha+\sigma}},\label{hg:1}\\
            |S_tx|_{\cB_{\alpha+\sigma}}&\lesssim t^{-\sigma}|x|_{\cB_{\alpha}}\label{hg:2}.
        \end{align}
        Now we introduce the notion of Banach scales and within extrapolated spaces and operators. 
        \begin{definition}{\em (\cite[Section V.1.1]{Amann2})}\label{def:banachScale}
            Let $J$ be an index set \tim{such that for any $\alpha\in J$, $\alpha+1\in J$}. We call the couple $(\cB_\alpha,A_\alpha)_{\alpha\in J} $ a Banach scale, if
            \begin{itemize}
                \item $(\cB_\alpha,\abs{\cdot}_{\tim{\cB_\alpha}})$ is a Banach space for every $\alpha\in J$ and $\cB_{\alpha_1}\hookrightarrow \cB_{\alpha_2}$ for $\alpha_1>\alpha_2$. We denote the embedding with $j_{\alpha_1}^{\alpha_2}$.
                \item $A_\alpha\in \cL (\cB_{\alpha+1},\cB_\alpha)$ is an isomorphism for every $\alpha\in J$.
                \item For every $\alpha_1>\alpha_2$ we have the equality
                \begin{align*}
                j_{\alpha_1}^{\alpha_2}A_{\alpha_1}=A_{\alpha_2}j_{\alpha_1+1}^{\alpha_2+1}.
            \end{align*}
        \end{itemize}
        If these embeddings are dense, we call the Banach scale densely injected.
        \end{definition}
        \begin{remark}\label{rem:Extrapolation}
                As a direct consequence of this definition, we know that for $\alpha_1>\alpha_2$, $A_{\alpha_1}$ is the $\cB_{\alpha_1}$-realization of $A_{\alpha_2}$. Furthermore, if the scale is densely injected, all operators are completely determined by $A_0$. Based on this, it is possible to construct the scale out of a single operator, \cite[Remark 1.1.2]{Amann2}. 
                One of the main advantages we get from this fact is that $A_{\alpha_1}\subset A_{\alpha_2}$ holds for all $\alpha_2<\alpha_1$. Therefore, for every $x\in \cB_{1+\alpha_1}$ we have the equality $A_{\alpha_1}x = A_{\alpha_2} x$. We frequently use this property  throughout this manuscript.
        \end{remark}
        In our case, we consider an operator $A=:A_0$ satisfying Assumption \ref{ass} and set $\cB_0:=L^p(\cO),~ \cB_1:=D(A)$. From this starting point, we build a Banach scale, which is  uniquely determined by $A$. For $\alpha >0$, we use the fractional powers of $A$ as follows.~\tim{We define for $\alpha>0$ and $\lambda$ in the resolvent of $A$, the spaces
        \begin{align*}
            \cB_\alpha:=D((A-\lambda \text{Id})^\alpha),
        \end{align*}
        equipped with the norm 
$\abs{\cdot}_{\cB_\alpha}=\norm{(A-\lambda \text{Id}))^\alpha \cdot}_{L^p(\cO)}$
        and let $A_\alpha$ be the $\cB_\alpha$-realization of $A_0$. Furthermore $\lambda=0$ can be assumed without loss of generality by shifting the operator $A$.}
        
        It can be shown that $(\cB_\alpha, A_\alpha)_{\alpha\in [0,\infty)}$ forms a (one-sided) densely injected Banach scale, see \cite[Theorem 1.2.4]{Amann2}. \\ 
        To extend this to negative indices, we use the theory of extrapolation spaces. The idea behind this concept comes from the fact, that $\cB_0$ can be reconstructed from $\cB_1$. To see this, note that for $x\in \cB_1$ we have $\abs{A_1^{-1} x }_{\cB_1}=\abs{x}_{\cB_0}$. Then $\cB_0$ is the completion of $\cB_1$ equipped with $\abs{A_1^{-1}\cdot}_{\cB_1}$. This now motivates the definition of extrapolation spaces as a super space of $\cB_0$ similarly constructed.  We define $\cB_{-1}$ as the completion of $\cB_0$ with respect to the norm $\abs{A_0^{-1}\cdot}_{\cB_0}$ and let $A_{-1}$ be the closure of $A_0$ in $\cB_{-1}$. For $\alpha \in (0,1)$ we define $\cB_{-\alpha}:=\left[\cB_{-1},\cB_0\right]_{1-\alpha}$ and let $A_{-\alpha}$ be the $\cB_{-\alpha}$-realization of $A_{-1}$. The operators with negative indices are  also called {\em extrapolated operators}. 
        \begin{remark}
            The same procedure can be done iteratively to extend the scale up to the index set $[-m,\infty)$ for an arbitrary $m\in \N$. Note that in every new step, we have to replace the previously constructed extrapolated spaces $\cB_{-m+1}$ by the isomorphic image $j_{-m+1}^{-m+2}(\cB_{-m+2})$ in order to ensure the validity of the dense embeddings of the form $\cB_{-m+2}\hookrightarrow \cB_{-m+1}\hookrightarrow \cB_{-m}$, see \cite[Section V.1.3]{Amann2}.
        \end{remark}
        \begin{theorem}{\em(\cite[Theorem V.1.2.1, Theorem V.1.5.4]{Amann2})}\label{thm:EpolOperator}
            The scale $(A_\alpha,\cB_\alpha)_{\alpha\in [-m,\infty)}$ constructed above forms a densely injected Banach scale in the sense of Definition \ref{def:banachScale}. Furthermore, for $-m\leq \alpha_1<\alpha_2<\infty$ and $\vartheta \in [0,1]$ we have the reiteration property
            \begin{align*}
                \left[\cB_{\alpha_1},\cB_{\alpha_2}\right]_\vartheta=\cB_{(1-\vartheta)\alpha_1+\vartheta \alpha_2},
            \end{align*}
            where $[\cdot,\cdot]_\vartheta$ stands for  complex interpolation.
            Moreover, $(\cB_\alpha)_{\alpha\in[-m,\infty)}$ satisfies the interpolation inequality \eqref{interpolation:ineq} and $(A_\alpha)^{\alpha-\beta}:\cB_\alpha\to \cB_\beta$ is an isomorphism for $-m\leq \alpha<\beta<\infty$.
        \end{theorem}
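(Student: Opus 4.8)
The plan is to deduce the statement from the general theory of interpolation--extrapolation scales of sectorial operators, the decisive hypothesis being that $A=A_0$ is, after the harmless shift putting $0$ into its resolvent set, a densely defined sectorial isomorphism of $\cB_0=L^p(\cO)$ with bounded imaginary powers --- which is precisely Assumption~\ref{ass}. First I would take for granted, as recalled before the theorem, that the positive part $(\cB_\alpha,A_\alpha)_{\alpha\in[0,\infty)}$ is already a densely injected Banach scale, and I would record the consequence of bounded imaginary powers that for $0\le\alpha_1<\alpha_2$ one has the identification $[\cB_{\alpha_1},\cB_{\alpha_2}]_\vartheta=\cB_{(1-\vartheta)\alpha_1+\vartheta\alpha_2}$ of complex interpolation spaces with domains of fractional powers, and that consequently $A^{-\sigma}\colon\cB_\alpha\to\cB_{\alpha+\sigma}$ is an isomorphism for every $\sigma\ge0$. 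Then I would verify the axioms of Definition~\ref{def:banachScale} for the extrapolated indices: $\cB_{-1}$ is a Banach space by construction, $\cB_0$ is dense in it, $A_{-1}$ is the isometric isomorphism $\cB_0\to\cB_{-1}$ obtained as the continuous extension of $A_0$ (this is exactly what the norm $\abs{A_0^{-1}\cdot}_{\cB_0}$ was designed for), and the compatibility relation $j_{\alpha_1}^{\alpha_2}A_{\alpha_1}=A_{\alpha_2}j_{\alpha_1+1}^{\alpha_2+1}$ at the new index holds because $A_{-1}$ extends $A_0$. For $\alpha\in(0,1)$ the space $\cB_{-\alpha}=[\cB_{-1},\cB_0]_{1-\alpha}$ is Banach, the embeddings $\cB_0\hookrightarrow\cB_{-\alpha}\hookrightarrow\cB_{-1}$ are dense because the complex method preserves dense embeddings, and interpolating the two isomorphisms $A_0\colon\cB_1\to\cB_0$ and $A_{-1}\colon\cB_0\to\cB_{-1}$ shows that the $\cB_{-\alpha}$-realization $A_{-\alpha}$ of $A_{-1}$ is an isomorphism $\cB_{1-\alpha}\to\cB_{-\alpha}$. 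Iterating, with the usual precaution of replacing at each step the previously built space by its isomorphic image so that the chain of dense embeddings survives, yields the densely injected Banach scale $(\cB_\alpha,A_\alpha)_{\alpha\in[-m,\infty)}$.

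Next I would prove the reiteration identity on the whole index set. Whenever $\alpha_1,\alpha_2$ lie in a common interval between consecutive integers, the identity $[\cB_{\alpha_1},\cB_{\alpha_2}]_\vartheta=\cB_{(1-\vartheta)\alpha_1+\vartheta\alpha_2}$ is once more the fractional-power identification, now applied to the extrapolated operators, which is legitimate because bounded imaginary powers is inherited by $A_{-1}$ and hence by every $A_{-\alpha}$. For arbitrary $-m\le\alpha_1<\alpha_2<\infty$ one glues these local identities by the reiteration theorem for the complex method, $[[\cB_a,\cB_b]_{\vartheta_1},[\cB_a,\cB_b]_{\vartheta_2}]_\eta=[\cB_a,\cB_b]_{(1-\eta)\vartheta_1+\eta\vartheta_2}$, with $a,b$ integers such that $a\le\alpha_1<\alpha_2\le b$. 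The interpolation inequality~\eqref{interpolation:ineq} then follows by exactly the argument used in the Remark after Definition~\ref{raum}: for $\alpha_1<\alpha_2<\alpha_3$ set $\vartheta=(\alpha_2-\alpha_1)/(\alpha_3-\alpha_1)$, so that $\cB_{\alpha_2}=[\cB_{\alpha_1},\cB_{\alpha_3}]_\vartheta$, and exactness of the complex method gives $\abs{x}_{\cB_{\alpha_2}}\lesssim\abs{x}_{\cB_{\alpha_1}}^{1-\vartheta}\abs{x}_{\cB_{\alpha_3}}^{\vartheta}$, which is~\eqref{interpolation:ineq} after raising both sides to the power $\alpha_3-\alpha_1$.

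Finally, for the isomorphism $(A_\alpha)^{\alpha-\beta}\colon\cB_\alpha\to\cB_\beta$ with $-m\le\alpha<\beta<\infty$ I would put $\sigma=\beta-\alpha>0$ and note that on nonnegative indices $A^{-\sigma}$ is the isomorphism $\cB_\gamma\to\cB_{\gamma+\sigma}$ coming from $\cB_\gamma=D(A^\gamma)$; extending $A^{-\sigma}$ to the extrapolation scale through the same closure and interpolation construction, and using both the additivity $A^{-\sigma_1-\sigma_2}=A^{-\sigma_1}A^{-\sigma_2}$ and the reiteration identity just established, one obtains the isomorphism at all indices down to $-m$. In essence the whole theorem is Amann's, and the work consists in certifying that our concrete $A$ satisfies his standing assumptions. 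The step I expect to be the \emph{real obstacle}, and the only place where the bounded-imaginary-powers part of Assumption~\ref{ass} is genuinely needed, is the reiteration identity across the integer thresholds: without bounded imaginary powers one only gets continuous embeddings, not equality, between $D(A^{\alpha_2})$ and $[D(A^{\alpha_1}),D(A^{\alpha_3})]_\vartheta$, so the point to check carefully is that bounded imaginary powers really does propagate through each single extrapolation step, which is exactly what makes the positive and negative parts of the scale behave coherently under complex interpolation.
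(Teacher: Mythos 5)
The paper does not prove this statement; it is cited verbatim from Amann's monograph (Theorems V.1.2.1 and V.1.5.4), and the reader is referred there. Your proposal is therefore a reconstruction of Amann's argument rather than a comparison with the paper's, and in broad strokes you have identified the right ingredients: the scale axioms follow from the construction of $\cB_{-1}$ as a completion, the reiteration property hinges on bounded imaginary powers and its inheritance along the scale (the content of the separately cited Theorem~\ref{thm:EpolSemigroup}), and the interpolation inequality is then an exercise in exactness.

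There is, however, a genuine gap in the step you flag as "gluing." To pass from the local identity between consecutive integers to arbitrary $-m\le\alpha_1<\alpha_2<\infty$ you invoke the complex reiteration formula $[[\cB_a,\cB_b]_{\vartheta_1},[\cB_a,\cB_b]_{\vartheta_2}]_\eta=[\cB_a,\cB_b]_{(1-\eta)\vartheta_1+\eta\vartheta_2}$ with integer $a\le\alpha_1<\alpha_2\le b$. But to use this you must already know that $\cB_{\alpha_i}=[\cB_a,\cB_b]_{\vartheta_i}$, and when $b-a>1$ this is precisely an instance of the cross-integer identity you are trying to establish --- the "local identities" between consecutive integers do not deliver it. Promoting two adjacent identities $[\cB_k,\cB_{k+1}]$ and $[\cB_{k+1},\cB_{k+2}]$ to $[\cB_k,\cB_{k+2}]$ is a Wolff-type reiteration problem, which is not automatic for the complex method. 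The clean way out, and the one Amann in effect takes, is to avoid gluing entirely: Seeley's identification $[\cB_a,D(A_a^{\,t})]_\vartheta=D(A_a^{\,\vartheta t})$ holds for \emph{all} $t>0$, not just $t\le 1$, once $A_a$ is known to have bounded imaginary powers. Applying it once with the anchor $a=-m$ (equivalently, to the translated scale $\cB_{-m+\cdot}$ generated by $A_{-m}$) yields the full reiteration identity on $[-m,\infty)$ in one stroke, and also gives the final isomorphism claim, since $(A_a)^{-\sigma}\colon\cB_a\to\cB_{a+\sigma}$ is then an isomorphism for every $\sigma\ge 0$. With this replacement the rest of your argument goes through.
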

        Based on this result, we conclude that the space part of the scale introduced above forms a family of monotone interpolation families as specified in Definition \ref{raum}.
        We will refer to it as the interpolation-extrapolation scale generated by $A$. Moreover, we are interested in semigroups generated by the operators in a Banach scale. In this case, a similar statement to Remark \ref{rem:Extrapolation} i) holds true. 
        \begin{theorem}{\em(\cite[Proposition V.1.5.5]{Amann2},~\cite[Theorem V.2.1.3]{Amann2})} \label{thm:EpolSemigroup}
            Let $A$ satisfy the Assumption \eqref{ass} and consider the interpolation-extrapolation scale generated by $A$.~Then for every $\alpha\in[-m,\infty)$ the operator $A_\alpha$ has bounded imaginary powers and generates an analytic semigroup. Further, for $\alpha>\beta$ we have that the semigroup generated by $A_\alpha$ is the same as the semigroup generated by $A_\beta$ restricted to $\cB_\alpha$.
        \end{theorem}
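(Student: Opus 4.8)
\noindent\emph{Proof strategy.} The plan is to reduce every assertion to the corresponding property of $A_0=A$, which is part of Assumption~\ref{ass}, by exhibiting each $A_\alpha$ as a similarity transform of $A_0$ (or of a previously treated $A_k$) through one of the isomorphisms supplied by Theorem~\ref{thm:EpolOperator} and by the extrapolation construction. This suffices because both ``having bounded imaginary powers'' and ``generating an analytic semigroup'' are stable under similarity: if $B=\Phi C\Phi^{-1}$ for a Banach space isomorphism $\Phi$, then $B^{is}=\Phi C^{is}\Phi^{-1}$ and $(\lambda-B)^{-1}=\Phi(\lambda-C)^{-1}\Phi^{-1}$, so the required uniform bounds transfer with a loss of the factor $\norm{\Phi}\norm{\Phi^{-1}}$; in particular similar operators share their spectrum.

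I would first reduce to integer indices. Given $\alpha\in[-m,\infty)$, set $k:=\lfloor\alpha\rfloor\in\{-m,-m+1,\dots\}$. By Theorem~\ref{thm:EpolOperator} the map $(A_k)^{\alpha-k}\colon\cB_\alpha\to\cB_k$ is an isomorphism; being a fractional power of $A_k$ it commutes with the resolvent of $A_k$, and since $A_\alpha$ is the $\cB_\alpha$-realization of $A_k$ a short computation gives $(A_k)^{\alpha-k}A_\alpha=A_k(A_k)^{\alpha-k}$, so $A_\alpha$ is similar to $A_k$. (When $k<0$ this uses that $A_k$ already has bounded imaginary powers --- established below --- so that $(A_k)^{\alpha-k}$ is well defined.) Hence it is enough to prove the two properties for integer indices. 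For $k\geq0$ one has $\cB_k=D(A^k)$ and $A_k$ is the part of $A$ in $\cB_k$, with intertwining isomorphism $A^k\colon\cB_k\to\cB_0$; for instance
\begin{align*}
\norm{A_k^{is}x}_{\cB_k}=\norm{A^kA^{is}x}_{\cB_0}=\norm{A^{is}A^kx}_{\cB_0}\leq C\norm{A^kx}_{\cB_0}=C\norm{x}_{\cB_k}\qquad(\abs{s}\leq1),
\end{align*}
and the same manipulation with $S_t$, and with the sectorial resolvent bound, yields the assertion for such $k$.

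For the negative integers $k\in\{-1,\dots,-m\}$ I would proceed inductively along the extrapolation construction. Recall that $\cB_{-j-1}$ is the completion of $\cB_{-j}$ for the norm $\norm{A_{-j}^{-1}\cdot}_{\cB_{-j}}$ and that $A_{-j-1}$ is the closure of $A_{-j}$ in $\cB_{-j-1}$. Consequently $A_{-j}^{-1}$ extends to an isometric isomorphism $\iota_{j+1}\colon\cB_{-j-1}\to\cB_{-j}$, one has $D(A_{-j-1})=\cB_{-j}$, and $A_{-j-1}=\iota_{j+1}^{-1}A_{-j}\iota_{j+1}$. Thus $A_{-j-1}$ is similar to $A_{-j}$, and starting from $A_0$ this propagates bounded imaginary powers and the generation of an analytic semigroup all the way down to $A_{-m}$, which also justifies the parenthetical remark used above.

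Finally, for the consistency of the semigroups: for $\alpha>\beta$ the operator $A_\alpha$ is the $\cB_\alpha$-realization of $A_\beta$, and since all $A_\gamma$ are similar to $A$ they share the resolvent set $\rho(A)$, on a sector $\Sigma$ of which one has $(\lambda-A_\alpha)^{-1}=(\lambda-A_\beta)^{-1}|_{\cB_\alpha}$, the restriction mapping $\cB_\alpha$ into $\cB_{\alpha+1}\hookrightarrow\cB_\alpha$. Choosing a contour $\Gamma\subset\Sigma$ as in the Dunford representation of an analytic semigroup, we get for $x\in\cB_\alpha$
\begin{align*}
S_t^{\alpha}x=\frac{1}{2\pi\txti}\int_\Gamma\txte^{\lambda t}(\lambda-A_\alpha)^{-1}x\,\txtd\lambda=\frac{1}{2\pi\txti}\int_\Gamma\txte^{\lambda t}(\lambda-A_\beta)^{-1}x\,\txtd\lambda=S_t^{\beta}x,
\end{align*}
so $S^{\alpha}=S^{\beta}|_{\cB_\alpha}$, as claimed. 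The genuinely delicate part is the extrapolation construction itself --- that the completion together with complex interpolation produces a Banach scale on which $A_0^{-1}$ and its fractional powers act as honest intertwining isomorphisms --- which is precisely the content of the theory of~\cite{Amann2} summarized in Theorem~\ref{thm:EpolOperator}; once that is in hand, the argument above is essentially bookkeeping, the sole non-formal ingredient being the similarity-stability noted in the first paragraph.
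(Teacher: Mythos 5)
The paper offers no proof of this statement: it is quoted verbatim from Amann's monograph (\cite[Proposition V.1.5.5, Theorems V.1.5.4 and V.2.1.3]{Amann2}). Your reconstruction is correct and follows essentially the argument of that reference --- each $A_\alpha$ is similar to $A_0$ through the connecting isomorphisms of the scale (the extension of $A_{-j}^{-1}$ for the integer extrapolation steps, fractional powers for the intermediate indices), bounded imaginary powers and analytic generation are similarity-invariant, and consistency of the semigroups follows from the resolvent identity $(\lambda-A_\alpha)^{-1}=(\lambda-A_\beta)^{-1}|_{\cB_\alpha}$ fed into the Dunford integral --- so there is nothing to add beyond your own (accurate) caveat that the substance lies in the scale-theoretic facts already packaged in Theorem~\ref{thm:EpolOperator}.
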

        Further details on the theory of Banach scales or extrapolation can be found in \cite[Section 6--7]{Amann} and \cite[Chapter V]{Amann2}.\\
        
        Keeping this in mind, we now introduce the following definition of a controlled rough path tailored to the parabolic structure of the PDE we consider, in the spirit of~\cite{GHN}. This is convenient for our aims, since the semigroup will not be incorporated in the definition of the controlled rough path as in~\cite{GHairer} or alternative approaches~\cite{GubinelliTindel, HN19} which iterate the stochastic convolution into itself. 
        
        \begin{definition} {\em (Controlled rough path according to a monotone family $(\cB_\alpha)_{\alpha\in \R}$).}\label{def:crp}
            We call a pair $(y,y')$ a controlled rough path for a fixed $\alpha\in \R$ if
            \begin{itemize}
                \item $(y,y')\in C([0,T];\cB_\alpha) \times ((C[0,T];\cB_{\alpha-\gamma} ) \cap C^{\gamma}([0,T];\cB_{\alpha-2\gamma} ))$. The component $y'$ is referred to as Gubinelli derivative of $y$.
                \item the remainder  \begin{align}\label{remainder}
                R^y_{t,s}= y_{t,s} -y'_s X_{t,s}    
            \end{align}
            belongs to $ C^{\gamma}([0,T]^2;\cB_{\alpha-\gamma})\cap C^{2\gamma}([0,T]^2;\cB_{\alpha-2\gamma})$.
            \end{itemize}
            \tim{The space of controlled rough paths is denoted by $\cD^{2\gamma}_{X,\alpha}$ and endowed with the norm $\norm{\cdot}_{X,2\gamma,\alpha}$ given by~\cite{GHN}
            \begin{align}\label{g:norm}
                \gubnorm{y}{y'}:= \left\|y \right\|_{\infty,\cB_\alpha} 
                + \|y' \|_{\infty,\cB_{\alpha-\gamma}}
                + \left[y'\right]_{\gamma,\cB_{\alpha-2\gamma}}
            +\left[R^y \right]_{\gamma,\cB_{\alpha-\gamma}}    + \left[R^y \right]_{2\gamma,\cB_{\alpha-2\gamma}}.
            \end{align}}
        \end{definition}
        Note that for paths $h:[0,T]\to \cB$ and second order processes $g:[0,T]^2\to \cB$ with values in a Banach space $\cB$
        \begin{align*}
            \left[h\right]_{\gamma,\cB}:= \sup\limits_{(s,t)\in \Delta_{[0,T]}} \frac{\abs{h_t-h_s}_\cB}{\abs{t-s}^\gamma}, \quad \left[g\right]_{\gamma,\cB}:= \sup\limits_{(s,t)\in \Delta_{[0,T]}} \frac{\abs{g_{t,s}}_\cB}{\abs{t-s}^\gamma}
        \end{align*}
        denotes the $\gamma$-H\"older seminorm. 
        The first index in the notation above always indicates the time regularity, and the second one stands for the space regularity. For simplicity, we write  $|y|_{\alpha}:=|y|_{\cB_\alpha}$,  $\|y\|_{\infty,\alpha}:=\|y\|_{\infty,\cB_\alpha}$ and $[y']_{\gamma,\alpha-2\gamma}:=[y']_{\gamma,\cB_{\alpha-2\gamma}}$ and analogously for the remainder. If we deal with different scales, we  write the full subscript.
        In order to emphasize the time horizon, we write $\cD^{2\gamma}_{X,\alpha}([0,T])$ instead of $\cD^{2\gamma}_{X,\alpha}$.  Furthermore, when the time interval is clear from the context, we  use the abbreviation $C^{\gamma'}(\cB_{\alpha'})$ for suitable $\gamma'$ and $\alpha'$ to point out the interplay between space and time regularity.  
        \begin{remark}\label{rem:CRPy}
            Note that we do not make the H\"older continuity of $y$ part of the definition of a controlled rough path, since
            using~\eqref{remainder} one immediately obtains for $\theta \in \left\{\gamma,2\gamma \right\}$ that
            \begin{align}\label{est:hoelder:y}
            \left[y \right]_{\gamma,\alpha-\theta}
            \leq \left\|y' \right\|_{\infty,\alpha-\theta} \left[X\right]_{\gamma} + \left[R^y \right]_{\gamma,\alpha-\theta}.
            \end{align}
        \end{remark}
        
        Given a controlled rough path, one can introduce the rough integral as follows~\cite[Theorem 4.5]{GHN}.
        
        \begin{theorem}
            \label{integral} Let $(y,y')\in \cD^{2\gamma}_{X,\alpha}$. Then 
            \begin{align}\label{Gintegral}
                \int\limits_{s}^{t} S_{t-r}y_{r}~\txtd \textbf{X}_{r} :=\lim\limits_{|\mathcal{P}|\to 0} \sum\limits_{[u,v]\in\mathcal{P}} S_{t-u}y_{u}X_{v,u} + S_{t-u}y'_{u}\mathbb{X}_{v,u},
            \end{align}
            where $\mathcal{P}$ denotes a partition of $[s,t]$ and the limit exists as an element in $\cB_{\alpha-2\gamma}$. For $0\leq \beta< 3\gamma$ the following estimate
            \begin{align}\label{estimate:integral}
                \abs{ \int\limits_{s}^{t} S_{t-r} y_{r}~\txtd\textbf{X}_{r} - S_{t-s}y_{s}X_{t,s} -S_{t-s}y'_{s}\mathbb{X}_{t,s}}_{\alpha-2\gamma+\beta} \lesssim \rho_\gamma(\X) \gubnorm{y}{y'} (t-s)^{3\gamma-\beta}
            \end{align}
            holds true. 
        \end{theorem}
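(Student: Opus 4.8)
I would realize the limit in~\eqref{Gintegral} by a sewing/germ argument adapted to the monotone scale $(\cB_\alpha)$, deduce~\eqref{estimate:integral} for $\beta=0$ from it, and then bootstrap to $\beta\in(0,3\gamma)$ by a dyadic recursion that uses the smoothing of $S$ near the terminal time. For $u\le v\le\tau$ set $\Xi^{(\tau)}_{u,v}:=S_{\tau-u}y_uX_{v,u}+S_{\tau-u}y'_u\mathbb{X}_{v,u}\in\cB_{\alpha-2\gamma}$, so that for a partition $\cP$ of $[s,t]$ the expression $\cI_\cP:=\sum_{[u,v]\in\cP}\Xi^{(t)}_{u,v}$ is precisely the Riemann sum in~\eqref{Gintegral}. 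Inserting a point $w\in(u,v)$ and using Chen's relation~\eqref{chen}, the controlled expansions $y_w=y_u+y'_uX_{w,u}+R^y_{w,u}$ and $y'_w=y'_u+(y'_w-y'_u)$ from~\eqref{remainder}, and the semigroup law $S_{\tau-u}=S_{\tau-w}S_{w-u}$, one obtains the defect identity
\begin{align*}
\Xi^{(\tau)}_{u,v}-\Xi^{(\tau)}_{u,w}-\Xi^{(\tau)}_{w,v}=S_{\tau-w}(S_{w-u}-\Id)\bigl(y_uX_{v,w}+y'_u\mathbb{X}_{v,w}+y'_uX_{w,u}X_{v,w}\bigr)-S_{\tau-w}\bigl(R^y_{w,u}X_{v,w}+(y'_w-y'_u)\mathbb{X}_{v,w}\bigr).
\end{align*}

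\textbf{Step 2 (defect estimate and existence).} I would estimate this defect in $\cB_{\alpha-2\gamma}$: $S_{\tau-w}$ is bounded there, $S_{w-u}-\Id$ is handled by~\eqref{hg:1} with $\sigma=2\gamma$ on $y_u\in\cB_\alpha$ (legitimate since $\gamma\le1/2$) and with $\sigma=\gamma$ on $y'_u\in\cB_{\alpha-\gamma}$, and the increments are controlled by $\rho_\gamma(\X)$ and by the seminorms $\|y\|_{\infty,\cB_\alpha}$, $\|y'\|_{\infty,\cB_{\alpha-\gamma}}$, $[y']_{\gamma,\cB_{\alpha-2\gamma}}$, $[R^y]_{2\gamma,\cB_{\alpha-2\gamma}}$ occurring in~\eqref{g:norm}. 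In each of the five terms the time exponents add up to $3\gamma$, giving
\begin{align*}
\bigl|\Xi^{(\tau)}_{u,v}-\Xi^{(\tau)}_{u,w}-\Xi^{(\tau)}_{w,v}\bigr|_{\alpha-2\gamma}\lesssim\rho_\gamma(\X)\,\gubnorm{y}{y'}\,(v-u)^{3\gamma}.
\end{align*}
Since $3\gamma>1$ (as $\gamma>1/3$) and $|\Xi^{(t)}_{u,v}|_{\alpha-2\gamma}\lesssim(v-u)^{\gamma}$, a standard sewing argument (telescoping from the trivial partition over dyadic refinements and summing $\sum_n 2^{n(1-3\gamma)}$) shows that $(\cI_{\cP_n})_n$ is Cauchy in $\cB_{\alpha-2\gamma}$ along dyadic partitions, and interleaving two sequences shows the limit is independent of the chosen sequence with $|\cP|\to0$; this defines~\eqref{Gintegral}. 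The very same telescoping yields~\eqref{estimate:integral} with $\beta=0$, that is, $|h_{a,b}|_{\alpha-2\gamma}\lesssim\rho_\gamma(\X)\gubnorm{y}{y'}(b-a)^{3\gamma}$ for $h_{a,b}:=\int_a^bS_{b-r}y_r\,\txtd\X_r-\Xi^{(b)}_{a,b}$.

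\textbf{Step 3 (regularizing estimate).} Fix $\beta\in(0,3\gamma)$. From~\eqref{Gintegral} one reads off $\int_a^m S_{b-r}y_r\,\txtd\X_r=S_{b-m}\int_a^m S_{m-r}y_r\,\txtd\X_r$ and $\Xi^{(b)}_{a,m}=S_{b-m}\Xi^{(m)}_{a,m}$ for $a\le m\le b$; together with additivity of the integral over $[a,m]\cup[m,b]$ and the defect identity of Step~1 these give, for $m=(a+b)/2$,
\begin{align*}
h_{a,b}=S_{b-m}h_{a,m}+h_{m,b}-\bigl(\Xi^{(b)}_{a,b}-\Xi^{(b)}_{a,m}-\Xi^{(b)}_{m,b}\bigr).
\end{align*}
Every term on the right except $h_{m,b}$ carries an outer $S_{b-m}$, hence by~\eqref{hg:2} (composed with itself if $\beta>1$) it lands in $\cB_{\alpha-2\gamma+\beta}$ at the cost $(b-m)^{-\beta}$; combined with the $\beta=0$ bound of Step~2 for $h_{a,m}$ and the Step~1/Step~2 estimates for the defect (all of order $(b-a)^{3\gamma}$ in $\cB_{\alpha-2\gamma}$, since $m-a=b-m=(b-a)/2$) these terms are $\lesssim\rho_\gamma(\X)\gubnorm{y}{y'}(b-a)^{3\gamma-\beta}$. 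Iterating the identity on the remaining piece $h_{m,b}$, whose underlying interval halves at each step, I would write $h_{s,t}=\sum_{k\ge1}\eta_k+h_{m_K,t}$ with $|\eta_k|_{\alpha-2\gamma+\beta}\lesssim\rho_\gamma(\X)\gubnorm{y}{y'}(2^{-k}(t-s))^{3\gamma-\beta}$ and $t-m_K=2^{-K}(t-s)$; since $|h_{m_K,t}|_{\alpha-2\gamma}\lesssim(2^{-K}(t-s))^{3\gamma}\to0$ while $\sum_k\eta_k$ converges absolutely in $\cB_{\alpha-2\gamma+\beta}\hookrightarrow\cB_{\alpha-2\gamma}$, passing to the limit in $\cB_{\alpha-2\gamma}$ forces $h_{s,t}=\sum_{k\ge1}\eta_k\in\cB_{\alpha-2\gamma+\beta}$, and summing the geometric series (convergent precisely because $3\gamma-\beta>0$) gives~\eqref{estimate:integral}.

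\textbf{Main obstacle.} The delicate point is the behaviour at the terminal time $r=t$, where $S_{t-r}$ collapses to the identity and provides no spatial smoothing, so a uniform Riemann sum cannot exhibit the gain $\beta$. Concentrating the refinement geometrically at $t$ — equivalently, the midpoint recursion of Step~3 — turns this degeneracy into a geometric series that is summable exactly under the sharp constraint $\beta<3\gamma$; by contrast, the remaining bookkeeping — running~\eqref{hg:1}--\eqref{hg:2} through all five terms of the defect while invoking only seminorms present in~\eqref{g:norm} — is routine.
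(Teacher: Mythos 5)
Your proof is correct, but Step~3 takes a genuinely different route for the regularizing estimate $\beta\in(0,3\gamma)$. The paper cites Theorem~\ref{integral} from~\cite[Theorem~4.5]{GHN} without a self-contained proof, and the intended mechanism is visible in its proof of the closely related Lemma~\ref{lemma:3.7}: one estimates the dyadic differences $\cI^{\cP^n}-\cI^{\cP^{n+1}}$ \emph{directly} in $\cB_{\alpha-2\gamma+\beta}$, lets the smoothing $S_{t-u}$ produce the singular weight $|t-m|^{-\beta}$, and then controls both the sum over the partition and the sum over $n$ by introducing an auxiliary parameter $\delta\in(\beta-1,3\gamma-1)$, which makes both sums close precisely when $\beta<3\gamma$. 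You instead establish $\beta=0$ first (your Steps~1--2, whose defect identity and term-by-term estimates do match what the paper and \cite{GHN} compute) and then bootstrap via a midpoint recursion concentrating the refinement at the terminal time $t$, producing a clean geometric series in $3\gamma-\beta$. This trade is favourable in that it makes the sharp constraint $\beta<3\gamma$ manifest and avoids the double-sum bookkeeping, at the cost of a two-pass argument and of needing the commutation $\int_a^m S_{b-r}y_r\,\txtd\X_r=S_{b-m}\int_a^m S_{m-r}y_r\,\txtd\X_r$ and additivity of the integral over adjacent subintervals, both of which you invoke correctly. Two minor points to spell out: for $\beta\in(1,3\gamma)$ inequality~\eqref{hg:2} only covers $\sigma\in[0,1]$, so you must factorize $S_{b-m}=S_{(b-m)/2}S_{(b-m)/2}$ and apply it twice (you flag this but should say it); and that the limit in~\eqref{Gintegral} is independent of the chosen partition sequence, not merely of dyadic ones, should be argued (you note interleaving, which suffices).
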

        We emphasize that the stochastic convolution increases the spatial regularity of the controlled rough path,  see~\cite[Corollary 4.6]{GHN} and \cite[Lemma 3.5]{HN21}. We recall this result, which will be used later on.
        \begin{corollary}\label{cor:higherreg}
            Let $(y,y')\in\cD^{2\gamma}_{X,\alpha}([0,T])$, where $T\in[0,1]$ and $0\leq \sigma<\gamma$. Then the integral map
            \begin{align*}
                (y,y')\mapsto (z,z') := \Big( \int_0^\cdot S_{\cdot-r}y_r~\txtd \X_r, y_\cdot \Big)
            \end{align*}
            maps $\cD^{2\gamma}_{X,\alpha}([0,T])$ into itself. Moreover, we have a $C>0$ such that
            \begin{align}\label{est:Integral2}
                \gubnormpar{z}{z'}{\alpha+\sigma} \leq |y_0|_{\alpha} + |y'_0|_{\alpha-\gamma} + C T^{\gamma-\sigma} (1+\rho_\gamma(\X)) \gubnorm{y}{y'}.
            \end{align}
        \end{corollary}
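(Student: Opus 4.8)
The plan is to verify that the pair $(z,z') = \big(\int_0^\cdot S_{\cdot-r} y_r \, \txtd\X_r,\, y_\cdot\big)$ satisfies all the requirements of Definition \ref{def:crp} at regularity level $\alpha+\sigma$, and to track the constants carefully so that the bound \eqref{est:Integral2} drops out. The natural starting point is the decomposition $z_{t,s} = z_t - z_s$. Using the semigroup property one writes $z_t = S_{t-s} z_s + \int_s^t S_{t-r} y_r \, \txtd\X_r$, so that
\begin{align*}
    z_{t,s} = (S_{t-s} - \Id) z_s + \int_s^t S_{t-r} y_r \, \txtd\X_r.
\end{align*}
Inserting the local expansion of the rough integral from Theorem \ref{integral} gives
\begin{align*}
    z_{t,s} = (S_{t-s}-\Id) z_s + S_{t-s} y_s X_{t,s} + S_{t-s} y'_s \mathbb{X}_{t,s} + Q_{t,s},
\end{align*}
where $Q_{t,s}$ is the remainder controlled by \eqref{estimate:integral}. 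Since $z'_s = y_s$, the controlled-rough-path remainder is
\begin{align*}
    R^z_{t,s} = z_{t,s} - z'_s X_{t,s} = (S_{t-s}-\Id) z_s + (S_{t-s}-\Id) y_s X_{t,s} + S_{t-s} y'_s \mathbb{X}_{t,s} + Q_{t,s}.
\end{align*}
Now each term is estimated in the appropriate space. For the first term I use \eqref{hg:1}: $(S_{t-s}-\Id)z_s$ costs a factor $(t-s)^\theta$ when moving from $\cB_{\alpha+\sigma+\theta}$ down to $\cB_{\alpha+\sigma}$, and $z_s$ itself already gains $\sigma$ regularity over $y$ by the smoothing estimate applied to the integral $\int_0^s S_{s-r}y_r\,\txtd\X_r$ together with the fact that $z_0 = 0$. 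For the $X_{t,s}$-term I combine $[X]_\gamma$ with \eqref{hg:1} for $(S_{t-s}-\Id)$ acting on $y_s \in \cB_\alpha$; for the $\mathbb{X}_{t,s}$-term I combine the bound on $\|\mathbb{X}\|_{2\gamma} \le \rho_\gamma(\X)$ with \eqref{hg:2} for $S_{t-s}$ raising $y'_s$ by $2\gamma+\sigma$; and $Q_{t,s}$ is handled directly by \eqref{estimate:integral} with $\beta = 2\gamma+\sigma$ (note $2\gamma+\sigma < 3\gamma$ since $\sigma<\gamma$), yielding a factor $(t-s)^{\gamma-\sigma}$. Altogether one reads off $[R^z]_{\gamma,\cB_{\alpha+\sigma-\gamma}}$ and $[R^z]_{2\gamma,\cB_{\alpha+\sigma-2\gamma}}$ with a prefactor of the form $C T^{\gamma-\sigma}(1+\rho_\gamma(\X))\gubnorm{y}{y'}$, after using $T\le 1$ to convert excess powers of $(t-s)$ into powers of $T$.

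The remaining ingredients are more routine. The sup-norm bound $\|z\|_{\infty,\cB_{\alpha+\sigma}}$ follows from Corollary's integral estimate with $s=0$, $z_0=0$, plus the observation that the "zeroth order" boundary terms $S_{t}y_0 X_{t,0} + S_t y'_0 \mathbb{X}_{t,0}$ produce exactly the $|y_0|_\alpha + |y'_0|_{\alpha-\gamma}$ contribution visible on the right-hand side of \eqref{est:Integral2} (here \eqref{hg:2} is used so that $S_t$ absorbs the $\sigma$, $\gamma+\sigma$, resp.\ $2\gamma+\sigma$ of spatial smoothing, at the price of $t^{-\sigma}$ etc., which is then multiplied by $t^\gamma$ or $t^{2\gamma}$ from the rough path and dominated using $T\le 1$). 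The Gubinelli derivative $z' = y_\cdot$ lies in $C([0,T];\cB_{\alpha})\subset C([0,T];\cB_{\alpha+\sigma-\gamma})$ provided $\sigma\le\gamma$, and its Hölder seminorm $[z']_{\gamma,\cB_{\alpha+\sigma-2\gamma}} = [y]_{\gamma,\cB_{\alpha+\sigma-2\gamma}} \le [y]_{\gamma,\cB_{\alpha-\gamma}}$ is controlled via Remark \ref{rem:CRPy}, i.e.\ by $\|y'\|_{\infty,\alpha-\gamma}[X]_\gamma + [R^y]_{\gamma,\alpha-\gamma} \le (1+\rho_\gamma(\X))\gubnorm{y}{y'}$; again a factor $T^{\gamma-\sigma}$ is free because $[z']$ is only needed at the lower level $\alpha+\sigma-2\gamma$ where $y$ is actually $\gamma$-Hölder, so one interpolates or simply bounds the seminorm over $[0,T]$ by $T^{\gamma-\sigma}$ times the $(\gamma-\sigma)$-nothing... more precisely one uses that on a short interval the $\gamma$-Hölder seminorm at a coarser space is $\le T^{0}$ times itself and the needed $T^{\gamma-\sigma}$ comes from the other terms, so no loss is incurred.

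The main obstacle is the careful bookkeeping of \emph{which} spatial index and \emph{which} power of $t-s$ each term carries, so that all five seminorms in \eqref{g:norm} at level $\alpha+\sigma$ come out simultaneously bounded by a single constant times $T^{\gamma-\sigma}(1+\rho_\gamma(\X))\gubnorm{y}{y'}$ plus the two initial-data terms. The delicate point is the remainder $R^z$ at the finest level $\cB_{\alpha+\sigma-2\gamma}$: here one must check that $2\gamma+\sigma$ is a legitimate choice of $\beta$ in \eqref{estimate:integral} (it is, since $\sigma<\gamma$ forces $2\gamma+\sigma<3\gamma$) and that the resulting time exponent $3\gamma - (2\gamma+\sigma) = \gamma-\sigma > 0$ is strictly positive, which is exactly what makes the map a contraction-friendly self-map of $\cD^{2\gamma}_{X,\alpha}$ and what will later be exploited in the fixed-point argument for \eqref{eq:intro}. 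Everything else is an application of \eqref{hg:1}, \eqref{hg:2}, the interpolation inequality \eqref{interpolation:ineq}, and $T\le 1$. \qed
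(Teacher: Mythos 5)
The paper itself does not prove Corollary~\ref{cor:higherreg}: it is quoted directly from \cite[Corollary~4.6]{GHN} and \cite[Lemma~3.5]{HN21}, so there is no ``paper proof'' to compare against. I therefore assess your sketch as a standalone argument.

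Your starting decomposition is the right one:
\begin{align*}
    R^z_{t,s}=(S_{t-s}-\Id)z_s+(S_{t-s}-\Id)y_sX_{t,s}+S_{t-s}y'_s\XX_{t,s}+Q_{t,s},
\end{align*}
and the treatment of the first, second and fourth summands (via \eqref{hg:1}, \eqref{hg:2}, and \eqref{estimate:integral} with an appropriate $\beta$) is sound. But two places are genuinely off. First, the third summand does \emph{not} produce a free $T^{\gamma-\sigma}$ at the finest level: for $\vartheta=2\gamma$ you get $\abs{S_{t-s}y'_s\XX_{t,s}}_{\alpha+\sigma-2\gamma}\lesssim\rho_\gamma(\X)\,\norm{y'}_{\infty,\alpha-\gamma}\,(t-s)^{2\gamma}$, with \emph{no} excess power of $(t-s)$, and $\norm{y'}_{\infty,\alpha-\gamma}$ cannot be shrunk with $T$ because $y'$ is only $\gamma$--H\"older in $\cB_{\alpha-2\gamma}$, not in $\cB_{\alpha-\gamma}$. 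You must split $y'_s=y'_0+(y'_s-y'_0)$: the $y'_0$ piece is exactly what populates $\abs{y'_0}_{\alpha-\gamma}$ on the right of \eqref{est:Integral2}, while $\abs{y'_s-y'_0}_{\alpha+\sigma-2\gamma}\lesssim\abs{y'_{s,0}}_{\alpha-2\gamma}^{1-\sigma/\gamma}\abs{y'_{s,0}}_{\alpha-\gamma}^{\sigma/\gamma}\lesssim T^{\gamma-\sigma}\gubnorm{y}{y'}$ by \eqref{interpolation:ineq}.

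Second, and relatedly, you misattribute the two initial-data terms: they do \emph{not} arise from the boundary terms $S_ty_0X_{t,0}+S_ty'_0\XX_{t,0}$ in the $\norm{z}_{\infty,\alpha+\sigma}$ estimate, which in fact already carry the factor $t^{\gamma-\sigma}$ (so that whole piece is $\lesssim T^{\gamma-\sigma}\rho_\gamma(\X)\gubnorm{y}{y'}$). The true source is the second component $z'=y$: both $\norm{z'}_{\infty,\alpha+\sigma-\gamma}$ and $[z']_{\gamma,\alpha+\sigma-2\gamma}$ live at intermediate regularity and need the same subtract-and-interpolate step (write $y_{t,s}=y'_sX_{t,s}+R^y_{t,s}$, split $y'_s$, interpolate $R^y$), which is where $\abs{y_0}_\alpha$ and $\abs{y'_0}_{\alpha-\gamma}$ unavoidably appear without a $T$-prefactor. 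Your paragraph on $[z']_{\gamma,\alpha+\sigma-2\gamma}$ stalls precisely there: the naive embedding $[y]_{\gamma,\alpha+\sigma-2\gamma}\le[y]_{\gamma,\alpha-\gamma}$ loses the $T^{\gamma-\sigma}$, and the phrase ``the needed $T^{\gamma-\sigma}$ comes from the other terms'' does not repair it. Until the $z'$-component and the $\XX$-term are handled by the interpolation/initial-data split, the claimed bound does not follow.
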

        If the random input $X$ is more regular,~i.e. $\widetilde{\gamma}\in(\frac{1}2,1)$, then~\eqref{Gintegral}
        reduces to the {\em Young integral} which can be defined for a path $y\in\mathfrak{C}:= C(\cB_\alpha)\cap C^{\widetilde{\gamma}}(\cB_{\alpha-\widetilde{\gamma}})$ as
        \begin{align}\label{young}
            \int_s^t S_{t-r} y_r~\txtd X_r =\lim\limits_{|\cP|\to 0} \sum\limits_{[u,v]\in\cP} S_{t-u} y_u X_{v,u}, 
        \end{align}
        whereas~\eqref{estimate:integral} reads as
        \begin{align}\label{est:young}
            \abs{ \int\limits_{s}^{t} S_{t-r} y_{r}~\txtd X_{r} - S_{t-s}y_{s}X_{t,s}} _{\alpha-\widetilde{\gamma}+\beta} \lesssim \|y\|_{\mathfrak{C}} [X]_{\widetilde{\gamma}}  (t-s)^{2\widetilde{\gamma}-\beta},
        \end{align}
        for $\beta<2\widetilde{\gamma}$.
    \section{Main results}\label{main}
        We recall that $\gamma\in (\frac{1}{3},\frac{1}{2}]$ indicates the time regularity of the $\gamma$-H\"older rough path $\X:=(X,\XX)$.
        The main goal of this section is to prove that~\eqref{eq:RoughConv} is well-defined in the space of controlled rough paths.
        Since the boundary data of~\eqref{eq:intro} will belong to some Besov space, whereas the solution is expected to belong to a Bessel potential scale, see \eqref{eq:BanachScale}, we first fix two abstract scales of Banach spaces $(\cB_\alpha)_{\alpha\in\R}$ and $(\widetilde\cB_\alpha)_{\alpha\in\R}$. Furthermore, we denote the corresponding space of controlled rough paths by $\cD^{2\gamma}_{X,\alpha}:=\cD^{2\gamma}_{X}(\cB_\alpha)$ respectively $\widetilde\cD^{2\gamma}_{X,\alpha}:=\cD^{2\gamma}_X(\widetilde\cB_\alpha)$. For simplicity, we fix the time horizon $T\leq 1$ throughout this section.\\
        The first step is to define~\eqref{eq:RoughConv}
        using Theorem~\ref{integral}.
        Therefore, we begin this section by examining how a controlled rough path changes under the influence of the Neumann operator.
        \begin{lemma}\label{lem:CRP}
            Let $(\cB_{\alpha})_{\alpha\in \R}$ and $(\widetilde{\cB}_{\alpha})_{\alpha\in \R}$ be two monotone scales of interpolation spaces and $N\in \bigcap_{i=0}^2 \cL(\widetilde{\cB}_{\alpha_1-i\gamma},\cB_{\alpha_2-i\gamma})$ for $\alpha_1,\alpha_2\in \R$. Then we obtain for every $(y,y^\prime)\in \widetilde{\cD}^{2\gamma}_{X,\alpha_1}$ that $(Ny,Ny^\prime)\in \cD^{2\gamma}_{X,\alpha_2}$.
        \end{lemma}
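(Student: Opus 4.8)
The plan is to verify the three defining properties of a controlled rough path for $(Ny, Ny')$ directly from those of $(y,y')$, using only that $N$ is a \emph{bounded linear} operator with the stated mapping properties $N \in \cL(\widetilde\cB_{\alpha_1 - i\gamma}, \cB_{\alpha_2 - i\gamma})$ for $i \in \{0,1,2\}$. The key structural observation is that since $N$ is linear, the remainder transforms covariantly: $R^{Ny}_{t,s} = N y_{t,s} - (N y')_s X_{t,s} = N(y_{t,s} - y'_s X_{t,s}) = N R^y_{t,s}$. Thus every quantity appearing in the controlled-rough-path norm $\gubnormpar{Ny}{Ny'}{\alpha_2}$ is simply the image under $N$ of the corresponding quantity for $(y,y')$, just measured one scale down.

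First I would check the regularity/continuity memberships. Since $N \in \cL(\widetilde\cB_{\alpha_1}, \cB_{\alpha_2})$, composing with $N$ preserves continuity in time, so $Ny \in C([0,T]; \cB_{\alpha_2})$ because $y \in C([0,T]; \widetilde\cB_{\alpha_1})$. Likewise $Ny' \in C([0,T]; \cB_{\alpha_2 - \gamma})$ from $N \in \cL(\widetilde\cB_{\alpha_1 - \gamma}, \cB_{\alpha_2-\gamma})$ and $y' \in C([0,T]; \widetilde\cB_{\alpha_1-\gamma})$, and $N y' \in C^\gamma([0,T]; \cB_{\alpha_2 - 2\gamma})$ with $[Ny']_{\gamma, \cB_{\alpha_2 - 2\gamma}} \le \norm{N}_{\cL(\widetilde\cB_{\alpha_1-2\gamma}, \cB_{\alpha_2-2\gamma})} [y']_{\gamma, \widetilde\cB_{\alpha_1 - 2\gamma}}$, using $N \in \cL(\widetilde\cB_{\alpha_1 - 2\gamma}, \cB_{\alpha_2 - 2\gamma})$. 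For the remainder, the identity $R^{Ny} = N R^y$ together with $N \in \cL(\widetilde\cB_{\alpha_1 - \gamma}, \cB_{\alpha_2 - \gamma})$ and $N \in \cL(\widetilde\cB_{\alpha_1 - 2\gamma}, \cB_{\alpha_2 - 2\gamma})$ gives $R^{Ny} \in C^\gamma([0,T]^2; \cB_{\alpha_2 - \gamma}) \cap C^{2\gamma}([0,T]^2; \cB_{\alpha_2 - 2\gamma})$ with the analogous bounds on the seminorms $[R^{Ny}]_{\gamma, \cB_{\alpha_2-\gamma}}$ and $[R^{Ny}]_{2\gamma, \cB_{\alpha_2 - 2\gamma}}$.

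Collecting these estimates term by term in the definition \eqref{g:norm} of the norm yields
\begin{align*}
    \gubnormpar{Ny}{Ny'}{\alpha_2} \lesssim \Big(\max_{i=0,1,2}\norm{N}_{\cL(\widetilde\cB_{\alpha_1 - i\gamma}, \cB_{\alpha_2 - i\gamma})}\Big)\, \gubnormpar{y}{y'}{\alpha_1},
\end{align*}
which in particular shows $(Ny, Ny') \in \cD^{2\gamma}_{X,\alpha_2}$ and that the map is bounded linear between the two spaces of controlled rough paths. I do not expect any genuine obstacle here: the only thing to be careful about is bookkeeping, namely matching each of the five terms in \eqref{g:norm} with the correct one of the three boundedness hypotheses on $N$ (the indices $i=0,1,2$ correspond respectively to the level-$\alpha$, level-$(\alpha-\gamma)$, and level-$(\alpha-2\gamma)$ components), and noting that the linearity of $N$ is exactly what makes the Gubinelli-derivative and remainder structure transfer without generating error terms. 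No rough-path-specific machinery (Theorem~\ref{integral}, Chen's relation, etc.) is needed for this lemma.
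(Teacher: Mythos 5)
Your proof is correct and follows essentially the same route as the paper's: verify the continuity memberships from $N\in\cL(\widetilde\cB_{\alpha_1},\cB_{\alpha_2})$ and $N\in\cL(\widetilde\cB_{\alpha_1-\gamma},\cB_{\alpha_2-\gamma})$, observe by linearity that $R^{Ny}=NR^y$, and transfer the H\"older seminorm bounds on $y'$ and $R^y$ through the remaining boundedness hypotheses on $N$. The paper's proof is a slightly more compressed version of exactly this argument, down to defining the remainder as $NR^y$ and using $Ny_{t,s}=Ny'_sX_{t,s}+R^{Ny}_{t,s}$ to close.
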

        \begin{proof}
            As a direct consequence of the assumption on $N$ we see $(Ny,Ny^\prime)\in C(\cB_{\alpha_2})\times C(\cB_{\alpha_2-\gamma_2})$. Furthermore, we define the remainder by $R^{Ny}:=NR^y$ and get for $\vartheta\in \{\gamma,2\gamma\}$
            \begin{align*}
                [Ny^\prime]_{\gamma,\cB_{\alpha_2-2\gamma}}&\leq \norm{N}_{\cL(\widetilde{\cB}_{\alpha_1-2\gamma},\cB_{\alpha_2-2\gamma})}[y^\prime]_{\gamma,\widetilde{\cB}_{\alpha_1-2\gamma}},\\
                [R^{Ny}]_{\tim{\vartheta},\cB_{\alpha_2-\vartheta}}&\leq \norm{N}_{\cL(\widetilde{\cB}_{\alpha_1-\vartheta},\cB_{\alpha_2-\vartheta})}[R^y]_{\tim{\vartheta},\widetilde{\cB}_{\alpha_1-\vartheta}}.
            \end{align*}
            Consequently $Ny^\prime\in C^\gamma(\cB_{\alpha_2-2\gamma})$ and $R^{Ny}\in C^\gamma(\cB_{\alpha_2-\gamma})\cap C^{2\gamma}(\cB_{\alpha_2-2\gamma})$.~Since $Ny_{t,s}=Ny^\prime_{s}X_{t,s}+R^{Ny}_{t,s}$, this concludes the proof. \qed
        \end{proof}\\
        \\
        Regarding this, we can define the rough convolution based on Theorem~\ref{integral} as follows. 
        \begin{corollary}\label{cor:RoughConv}
            Let  $(\cB_{\alpha})_{\alpha\in \R}$, $(\widetilde{\cB}_{\alpha})_{\alpha\in \R}$ and $N$ as in Lemma~\ref{lem:CRP} and $(y,y^\prime)\in \widetilde{\cD}^{2\gamma}_{X,\alpha_1}$. Then the rough convolution  
            \begin{align}\label{def:RoughConv}
                \cI_t:=\int_0^t S_{t-r}Ny_r~\txtd \X_r:=\lim\limits_{|\mathcal{P}|\to 0} \sum\limits_{[u,v]\in\mathcal{P}} S_{t-u}N\left(y_{u}X_{v,u} + y'_{u}\mathbb{X}_{v,u}\right),
            \end{align}
            exists as an element of $\cB_{\alpha_2-2\gamma}$ for every $t\in [0,T]$, where $\mathcal{P}$ denotes a partition of $[s,t]$. Furthermore, we have for $0\leq s < t \leq T$ and $0\leq \beta<3\gamma$ the estimate
            \begin{align}\label{estimate:NewIntegral}
                \abs{\cR_{t,s}^{Ny}}_{\alpha_2-2\gamma+\beta} \lesssim \rho_\gamma(\X) \norm{Ny,Ny'}_{X,2\gamma,\alpha_2} (t-s)^{3\gamma-\beta},
            \end{align}
            where $\cR^{Ny}_{t,s}:=\int_{s}^{t} S_{t-r} Ny_{r}~\txtd\textbf{X}_{r} - S_{t-s}N\left(y_{s}X_{t,s} -y'_{s}\mathbb{X}_{t,s}\right)$ is the integral remainder. Consequently, $(\cI,Ny)\in \cD^{2\gamma}_{X,\alpha_2+ \theta}$ holds for every $\theta \in [0,\gamma)$.
        \end{corollary}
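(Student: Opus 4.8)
The plan is to read the statement off from Lemma~\ref{lem:CRP}, Theorem~\ref{integral} and Corollary~\ref{cor:higherreg}; no new analytic input is required, the work being entirely bookkeeping about the order in which $N$ and the semigroup act.

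First I would apply Lemma~\ref{lem:CRP} with the operator $N\in\bigcap_{i=0}^2\cL(\widetilde{\cB}_{\alpha_1-i\gamma},\cB_{\alpha_2-i\gamma})$ and the controlled rough path $(y,y')\in\widetilde{\cD}^{2\gamma}_{X,\alpha_1}$. This yields $(Ny,Ny')\in\cD^{2\gamma}_{X,\alpha_2}$, with remainder $R^{Ny}=NR^y$ and the bound $\norm{Ny,Ny'}_{X,2\gamma,\alpha_2}\lesssim\norm{y,y'}_{X,2\gamma,\alpha_1}$ implicit in its proof. Next I would invoke Theorem~\ref{integral} for this controlled rough path, i.e.\ with $\alpha$ there replaced by $\alpha_2$. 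This gives at once that $\int_s^t S_{t-r}(Ny)_r\,\txtd\X_r$ exists in $\cB_{\alpha_2-2\gamma}$ as the limit of the compensated Riemann sums $\sum_{[u,v]\in\cP}\big(S_{t-u}(Ny)_uX_{v,u}+S_{t-u}(Ny')_u\XX_{v,u}\big)$. Since $N$ is linear and $(Ny)_u=N(y_u)$, $(Ny')_u=N(y'_u)$, each summand equals $S_{t-u}N\big(y_uX_{v,u}+y'_u\XX_{v,u}\big)$, which is the representation~\eqref{def:RoughConv}. Identifying likewise $S_{t-s}(Ny)_sX_{t,s}+S_{t-s}(Ny')_s\XX_{t,s}=S_{t-s}N\big(y_sX_{t,s}+y'_s\XX_{t,s}\big)$, the integral remainder $\cR^{Ny}_{t,s}$ coincides with the error term controlled in~\eqref{estimate:integral}, so~\eqref{estimate:NewIntegral} is precisely that estimate read for $(Ny,Ny')$ with $0\le\beta<3\gamma$.

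Finally, for the gain in spatial regularity I would apply Corollary~\ref{cor:higherreg} to $(Ny,Ny')\in\cD^{2\gamma}_{X,\alpha_2}([0,T])$ — admissible since $T\le1$ — with $\sigma=\theta\in[0,\gamma)$. By that corollary the map $(Ny,Ny')\mapsto\big(\int_0^\cdot S_{\cdot-r}(Ny)_r\,\txtd\X_r,\,(Ny)_\cdot\big)=(\cI,Ny)$ sends $\cD^{2\gamma}_{X,\alpha_2}([0,T])$ into $\cD^{2\gamma}_{X,\alpha_2+\theta}([0,T])$, which is exactly the asserted membership $(\cI,Ny)\in\cD^{2\gamma}_{X,\alpha_2+\theta}$.

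I do not expect any real obstacle here. The only point of care is to keep the composition straight — $N$ maps the $\widetilde{\cB}$-scale into the $\cB$-scale, on which the semigroup $(S_t)$ then acts, so that the Riemann sums genuinely take the form in~\eqref{def:RoughConv} — and to note that the Gubinelli derivative of the convolution $\cI$ is the integrand $Ny$, not $y$ itself, which is precisely the second component $z'=y_\cdot$ produced by Corollary~\ref{cor:higherreg} when read with integrand $Ny$.
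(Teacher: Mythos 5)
Your proposal is correct and follows exactly the route the paper takes: Lemma~\ref{lem:CRP} to transport $(y,y')$ to $(Ny,Ny')\in\cD^{2\gamma}_{X,\alpha_2}$, Theorem~\ref{integral} for existence of the integral and the remainder estimate (with linearity of $N$ giving the stated form of the Riemann sums), and Corollary~\ref{cor:higherreg} for the gain $\theta\in[0,\gamma)$ in spatial regularity. The paper's proof is simply a one-line citation of these three facts; your write-up fills in the same bookkeeping, including the correct identification of the Gubinelli derivative of $\cI$ as $Ny$ (and you may note in passing that the minus sign in the paper's displayed definition of $\cR^{Ny}_{t,s}$ is a typo for a plus).
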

        \begin{proof}
            The existence of the rough convolution follows directly from Theorem~\ref{integral} and Lemma~\ref{lem:CRP}. Moreover, we obtain that $(\cI,Ny)\in \cD^{2\gamma}_{X,\alpha_2+ \theta}$ for every $\theta \in [0,\gamma)$ due to Corollary \ref{cor:higherreg}.
            \qed
        \end{proof}\\
        
        In order to make sense of~\eqref{eq:RoughConv} in the space of controlled rough paths, we need to make sure that $\cI_t\in D(A)$ for every $t\in[0,T]$ and to find a suitable Gubinelli derivative for $A \cI$. Therefore, we first specify the scales of Banach spaces which are required in our framework.\\ 
       We let from now on $p\in [2,3]$ and $2>\alpha>1+\frac{1}{p}$. In this case we define the spaces $\widetilde\cB_\beta:=B^{\beta-1-\frac{1}{p}}_{p,p}(\partial \cO)$ for $\beta \in \R$ and let $(A_\beta,\cB_\beta)_{\beta \in [-2,\infty)}$ be the interpolation-extrapolation scale generated by $A:D(A)\subset L^p(\cO)\to L^p(\cO)$. When $A$ satisfies Assumption \ref{ass}, we know that the scale of Banach spaces $\cB_\beta=D(A^\beta)$ can be expressed by
        \begin{align}\label{eq:BanachScale}
            \cB_{\frac{\beta}{2}}= H^{\beta,p}_\cC(\cO) :=
            \begin{cases}
                \{ u\in H^{\beta,p}(\cO) : \cC u =0 \}, & \beta>1+\frac{1}{p}\\
                H^{\beta,p}(\cO), & -1+\frac{1}{p}<\beta<1+\frac{1}{p}
            \end{cases},
        \end{align}
        for $-1+\frac{1}{p}<\beta\leq 2$, see for example \cite[Theorem 7.1]{Amann}. Recalling now that the Neumann operator $N$ is the solution operator of \eqref{eq:bvp}, we obtain for all $0<\varepsilon<\frac{1}{2}+\frac{1}{2p}$ that $N\in \cL(\widetilde\cB_{\alpha},\cB_\varepsilon)$, since $H^{\alpha,p}(\cO) \hookrightarrow  H^{2\varepsilon,p}(\cO) $. If we further replace $\alpha$ by $\alpha- \vartheta$ with $\vartheta\in \{\gamma, 2\gamma \}$, then $N$ is still bounded into $H^{\alpha-\vartheta,p}(\cO)=\cB_{\frac{\alpha-\vartheta}{2}}\hookrightarrow \cB_{\varepsilon-\vartheta}$, but $Ng$ is only a weak solution of the boundary value problem, see \cite[Section 9]{Amann}. This leads to the fact that $N\in \bigcap_{i=0}^2 \cL(\widetilde{\cB}_{\alpha-i\gamma},\cB_{\varepsilon-i\gamma})$, which means that Corollary~\ref{cor:RoughConv} is applicable with $\alpha_1:=\alpha$ and $\alpha_2:=\varepsilon$. 
        \begin{corollary}\label{cor:RoughConvInDomain}
            Let $0<\varepsilon<\frac{1}{2}+\frac{1}{2p}$ be arbitrary and $(y,y^\prime)\in \widetilde{\cD}^{2\gamma}_{X,\alpha}$. Then the rough integral $\cI_t$ defined by \eqref{def:RoughConv} belongs to $D(A)$ for every $t\in[0,T]$.
        \end{corollary}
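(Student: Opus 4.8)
The plan is to deduce from Corollary~\ref{cor:RoughConv} that $\cI_t$ in fact belongs to $\cB_\delta$ for some $\delta\ge 1$, and then to use $D(A)=\cB_1$ together with the monotone embedding $\cB_\delta\hookrightarrow\cB_1$. The first observation I would make is that the value of the rough integral in~\eqref{def:RoughConv} does not depend on the particular admissible $\varepsilon\in(0,\tfrac12+\tfrac1{2p})$: the Riemann sums $\sum_{[u,v]\in\mathcal{P}}S_{t-u}N\big(y_uX_{v,u}+y'_u\mathbb{X}_{v,u}\big)$ do not involve $\varepsilon$ at all, and for every such $\varepsilon$ Corollary~\ref{cor:RoughConv} guarantees their convergence in $\cB_{\varepsilon-2\gamma}$; since these spaces are nested, the limits coincide. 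Hence it suffices to establish $\cI_t\in D(A)$ after fixing $\varepsilon$ as close to $\tfrac12+\tfrac1{2p}$ as we wish.

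Next I would record the elementary inequality that makes the parameter ranges cooperate. For $p\in[2,3]$ one has $\tfrac{p-1}{2p}=\tfrac12-\tfrac1{2p}\le\tfrac13<\gamma$, hence $1-\gamma<\tfrac12+\tfrac1{2p}$. I therefore fix $\varepsilon$ with $1-\gamma<\varepsilon<\tfrac12+\tfrac1{2p}$ and then pick $\theta$ with $1-\varepsilon<\theta<\gamma$; this interval is nonempty (because $1-\varepsilon<\gamma$) and contained in $[0,\gamma)$ (because $\varepsilon<1$), and by construction $\varepsilon+\theta>1$.

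Finally, as explained in the paragraph preceding the statement, this choice of $\varepsilon$ yields $N\in\bigcap_{i=0}^2\cL(\widetilde\cB_{\alpha-i\gamma},\cB_{\varepsilon-i\gamma})$, so Corollary~\ref{cor:RoughConv} applies with $\alpha_1=\alpha$, $\alpha_2=\varepsilon$ and gives $(\cI,Ny)\in\cD^{2\gamma}_{X,\varepsilon+\theta}$ for the chosen $\theta$; this is precisely where the spatial regularization of the rough convolution recorded in Corollary~\ref{cor:higherreg} enters. In particular $\cI\in C([0,T];\cB_{\varepsilon+\theta})$, so $\cI_t\in\cB_{\varepsilon+\theta}\hookrightarrow\cB_1=D(A)$ for every $t\in[0,T]$, which is the claim.

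There is no genuinely hard step here; the only point that needs a word of justification is that $\cI_t$ does not depend on $\varepsilon$ within the admissible range, so that we may push $\varepsilon$ up towards $\tfrac12+\tfrac1{2p}$. The remaining bookkeeping is harmless, since all indices occurring stay inside $[-2,\infty)$, where the interpolation-extrapolation scale generated by $A$ — and in particular the embedding $\cB_{\varepsilon+\theta}\hookrightarrow\cB_1$ — is available; indeed $1\le\varepsilon+\theta<\tfrac12+\tfrac1{2p}+\gamma<2$.
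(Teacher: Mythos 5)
Your proof is correct and follows essentially the same route as the paper: apply Corollary~\ref{cor:RoughConv} and Corollary~\ref{cor:higherreg} to gain a spatial regularity gain of $\theta<\gamma$, and then choose $\varepsilon$ close to $\tfrac12+\tfrac1{2p}$ and $\theta$ close to $\gamma$ so that $\varepsilon+\theta\ge 1$. One point you make explicit that the paper leaves implicit is the observation that the value of $\cI_t$ does not depend on the admissible $\varepsilon$ (the Riemann sums are $\varepsilon$-free and the limit spaces are nested), which is what reconciles the "arbitrary $\varepsilon$" in the statement with the specific choice made in the argument; this is a worthwhile clarification, but not a different method.
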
 
        \begin{proof} 
            According to Lemma \ref{lem:CRP} and Corollary \ref{cor:RoughConv} we get for every $\theta\in [0,\gamma)$ that $(\cI,Ny)\in \cD^{2\gamma}_{X,\varepsilon+\theta}$. We take $\theta:=\frac{1}{3}+\delta<\gamma$ with $\delta>0$ small enough. Then we can choose $\varepsilon:=\frac{1}{2}+\frac{1}{2p}-\delta$, so that we obtain $\varepsilon+\theta=\frac{5}{6}+\frac{1}{2p}\geq 1$ since $p\leq 3$. Therefore, we conclude that $\cI_t\in D(A)$ for every $t\in[0,T]$.
            \qed
        \end{proof}
        \begin{remark}\label{rem:dirichlet}
            \begin{itemize}
               \item [1)] The essential step in the proof of Corollary \ref{cor:RoughConvInDomain} is that we can choose an $\varepsilon$ such that $\varepsilon>1-\gamma$. Here we recall that $\gamma$ stands for the regularity of the noise. In the case of Neumann conditions, we have seen that this is possible. But for Dirichlet boundary conditions, we can choose $\varepsilon$ only up to $\frac{1}{2p}$. This comes from the fact that the Dirichlet-operator $\mathfrak{D}$ is  bounded from $B^{\beta-\frac{1}{p}}_{p,p}(\partial \cO)$ to $H^{\beta,p}(\cO)$ and provides a strong solution for $\beta>\frac{1}{p}$. Since $p\geq 2$ and $\gamma < \frac{1}{2}$, it is not possible to find an $\varepsilon$ such that $\varepsilon>1-\gamma$.
               \item   [2)] In the Young regime, i.e.~$\widetilde{\gamma}\in(\frac{1}2,1)$, we can incorporate Dirichlet boundary noise since the conditions $\varepsilon>1-\widetilde{\gamma}$ and $\varepsilon<\frac{1}{2p}$ can simultaneously be fulfilled. For additive fractional noise, it is known that Dirichlet boundary conditions can be incorporated provided that $H\in(\frac{3}{4},1)$ as established in~\cite{DuncanMaslowski}. We provide further details on the well-posedness of~\eqref{eq:intro} with multiplicative Dirichlet boundary noise in Theorem~\ref{globalDirichlet} and an example in Section \ref{examples}.
            \end{itemize}
        \end{remark}
        From now on we assume that $\varepsilon>1-\gamma$ so that $\cI_t\in D(A)$, as proved in Corollary~\ref{cor:RoughConvInDomain}, and set $\eta:=1-\varepsilon$, where $\varepsilon=\frac{1}{2} +\frac{1}{2p}-\delta$ for a small $\delta>0$. 
        \begin{remark}
        Since $Ny$ is a Gubinelli derivative for $\cI$, it would make sense to consider $ANy$ as one for $A\cI$. However, regarding the definition of $N$, $Ny$ does not belong to $D(A)$. Due to this reason, we need an extension of $A$, which is given by the extrapolated operator introduced in Section \ref{preliminaries}. In fact, $A_{-\eta}$ is the weakest possible extrapolation operator such that $A_{-\eta}Ny$ is well-defined.
        \end{remark}
        
        \begin{theorem}\label{thm:RoughConv}
            For every $(y,y^\prime)\in \widetilde{\cD}^{2\gamma}_{X,\alpha}$ we have $(A\cI,A_{-\eta}Ny)\in \cD^{2\gamma}_{X,-\eta}$.
        \end{theorem}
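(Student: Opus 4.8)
The plan is to transfer the controlled-rough-path structure of $\cI$ through the unbounded operator $A$ by moving the whole picture down two levels in the extrapolation scale. By Corollary~\ref{cor:RoughConvInDomain} we know $\cI_t\in D(A)=\cB_1$ for every $t$, and by Corollary~\ref{cor:RoughConv} we have $(\cI,Ny)\in\cD^{2\gamma}_{X,\varepsilon+\theta}$ for every $\theta\in[0,\gamma)$; with the specific choice $\theta=\tfrac13+\delta$ and $\varepsilon=\tfrac12+\tfrac1{2p}-\delta$ this gives $(\cI,Ny)\in\cD^{2\gamma}_{X,\varepsilon+\theta}$ with $\varepsilon+\theta\geq 1$, hence also $(\cI,Ny)\in\cD^{2\gamma}_{X,1}$ after embedding. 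Now apply $A=A_0:\cB_1\to\cB_0$, or more generally the extrapolated operators $A_{-\eta-\vartheta}:\cB_{1-\eta-\vartheta}\to\cB_{-\eta-\vartheta}$, which are isomorphisms by Theorem~\ref{thm:EpolOperator}. Since $\varepsilon=1-\eta$, the three regularity levels appearing in Definition~\ref{def:crp} for a controlled rough path in $\cD^{2\gamma}_{X,-\eta}$ are exactly $\cB_{-\eta}$, $\cB_{-\eta-\gamma}$, $\cB_{-\eta-2\gamma}$, and $A$ maps the levels $\cB_{1-\eta}=\cB_\varepsilon$, $\cB_{\varepsilon-\gamma}$, $\cB_{\varepsilon-2\gamma}$ boundedly onto them. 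The key point is that $A_{-\eta}$ restricted to $\cB_\varepsilon\supset\cB_1$ agrees with $A_0$ on $\cB_1$ by Remark~\ref{rem:Extrapolation}, so the composition makes sense and is consistent.

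First I would check the membership $A\cI\in C([0,T];\cB_{-\eta})$ and $A_{-\eta}Ny\in C([0,T];\cB_{-\eta-\gamma})\cap C^\gamma([0,T];\cB_{-\eta-2\gamma})$: these follow from the corresponding statements for $(\cI,Ny)\in\widetilde{\cD}^{2\gamma}_{X,\varepsilon}$ (which we have, since $\varepsilon\leq\varepsilon+\theta$ and the scale embeds) by applying the bounded extrapolated operators $A_{-\eta}:\cB_\varepsilon\to\cB_{-\eta}$, $A_{-\eta-\gamma}:\cB_{\varepsilon-\gamma}\to\cB_{-\eta-\gamma}$, $A_{-\eta-2\gamma}:\cB_{\varepsilon-2\gamma}\to\cB_{-\eta-2\gamma}$. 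Second, I would identify the remainder: define $R^{A\cI}_{t,s}:=A\cI_{t,s}-(A_{-\eta}Ny_s)X_{t,s}$ and observe that, because all these operators are restrictions of one another (Remark~\ref{rem:Extrapolation}, the relation $j_{\alpha_1}^{\alpha_2}A_{\alpha_1}=A_{\alpha_2}j_{\alpha_1+1}^{\alpha_2+1}$), one has $R^{A\cI}_{t,s}=A_{-\eta-\gamma}R^{\cI}_{t,s}$ pointwise, where $R^\cI$ is the remainder of $(\cI,Ny)\in\widetilde{\cD}^{2\gamma}_{X,\varepsilon}$. Then the Hölder bounds $[R^{A\cI}]_{\gamma,\cB_{-\eta-\gamma}}\lesssim[R^\cI]_{\gamma,\cB_{\varepsilon-\gamma}}$ and $[R^{A\cI}]_{2\gamma,\cB_{-\eta-2\gamma}}\lesssim[R^\cI]_{2\gamma,\cB_{\varepsilon-2\gamma}}$ follow from boundedness of the extrapolated operators, and likewise $[A_{-\eta}Ny]_{\gamma,\cB_{-\eta-2\gamma}}\lesssim[Ny]_{\gamma,\cB_{\varepsilon-2\gamma}}$. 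This verifies every clause of Definition~\ref{def:crp} for $(A\cI,A_{-\eta}Ny)\in\cD^{2\gamma}_{X,-\eta}$.

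The subtle point — and the step I expect to require the most care — is the consistency of the operators: one must be sure that "$A\cI$" really means $A_{-\eta-\gamma}\cI$ applied to $\cI_t\in D(A)=\cB_1$, and that this coincides with $A_0\cI_t$, so that the expression $R^{A\cI}_{t,s}=A_{-\eta-\gamma}R^\cI_{t,s}$ is legitimate even though $R^\cI_{t,s}$ need not lie in $D(A)$. This is exactly the content of Remark~\ref{rem:Extrapolation}: $A_{\alpha_1}\subset A_{\alpha_2}$ for $\alpha_2<\alpha_1$, and the chain of restrictions $j^{\alpha_2}_{\alpha_1}A_{\alpha_1}=A_{\alpha_2}j^{\alpha_2+1}_{\alpha_1+1}$ guarantees that applying $A$ to an element of a higher space and then embedding equals embedding first and applying the extrapolated $A$. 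A secondary point is that $Ny$ does not lie in $D(A)$ — this is why $\eta=1-\varepsilon$ was chosen as small as possible in the Remark preceding the theorem: $A_{-\eta}Ny$ is well-defined precisely because $Ny\in\cB_\varepsilon=\cB_{1-\eta}=D(A_{-\eta}^{?})$, i.e.\ $Ny$ sits in the domain of the $(-\eta)$-extrapolated operator. Once this bookkeeping is nailed down, the quantitative estimates are immediate from the operator-norm bounds, and combining them with the $\widetilde{\cD}^{2\gamma}_{X,\varepsilon}$-norm of $(\cI,Ny)$ (itself controlled via Corollary~\ref{cor:RoughConv} and~\eqref{est:Integral2}) yields a bound of the form $\gubnormpar{A\cI}{A_{-\eta}Ny}{-\eta}\lesssim\norm{Ny,Ny'}_{X,2\gamma,\varepsilon}$, which one may record for later use in the fixed-point argument.
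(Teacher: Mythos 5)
Your proposal is correct, and it takes a genuinely different route from the paper. The paper's proof does not factor the remainder through the already-established controlled-rough-path structure of $(\cI,Ny)$; instead it expands $R^z_{t,s}$ explicitly into four pieces,
\begin{align*}
R^z_{t,s}=A\cR^{Ny}_{t,s}+AS_{t-s}Ny^\prime_s\XX_{t,s}+(AS_{t-s}-A_{-\eta})Ny_sX_{t,s}+A(S_{t-s}-\Id)\cI_s,
\end{align*}
and estimates each piece separately via \eqref{estimate:NewIntegral}, the smoothing bounds \eqref{hg:1}--\eqref{hg:2}, and Remark~\ref{rem:Extrapolation}. You instead observe that Corollary~\ref{cor:RoughConv} (with $\theta=0$, or any $\theta\in(0,\gamma)$ followed by the scale embedding) already gives $(\cI,Ny)\in\cD^{2\gamma}_{X,\varepsilon}$ --- hence $R^{\cI}\in C^{\gamma}(\cB_{\varepsilon-\gamma})\cap C^{2\gamma}(\cB_{\varepsilon-2\gamma})$ and $Ny\in C(\cB_{\varepsilon-\gamma})\cap C^{\gamma}(\cB_{\varepsilon-2\gamma})$ --- and then apply the bounded extrapolated operators $A_{-\eta-i\gamma}\in\cL(\cB_{\varepsilon-i\gamma},\cB_{-\eta-i\gamma})$, $i=0,1,2$, at each level. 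The consistency of these restrictions (Remark~\ref{rem:Extrapolation}) justifies both the identification $A\cI_t=A_{-\eta}\cI_t$ on $\cB_1\subset\cB_{1-\eta}=\cB_\varepsilon$ and the factorization $R^{A\cI}_{t,s}=A_{-\eta}R^{\cI}_{t,s}=A_{-\eta-\gamma}R^{\cI}_{t,s}$, which is legitimate precisely because $R^{\cI}_{t,s}\in\cB_{\varepsilon}$. Your route is more modular and avoids re-deriving the remainder decomposition, essentially outsourcing it to Corollary~\ref{cor:RoughConv}; what the paper's explicit four-term computation buys is a bound stated directly in terms of $\norm{Ny,Ny'}_{X,2\gamma,\varepsilon}$ with a factor $T^\gamma$ (or $T^{2\gamma-\vartheta}$) made visible, which feeds conveniently into the fixed-point and global-existence arguments; your approach recovers an equivalent bound by combining the operator-norm estimates with \eqref{est:Integral2}. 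Two minor notational slips: $(\cI,Ny)$ is a controlled rough path in $\cD^{2\gamma}_{X,\varepsilon}$, not $\widetilde{\cD}^{2\gamma}_{X,\varepsilon}$, since it is $\cB$-valued; and the domain of $A_{-\eta}$ is exactly $\cB_{1-\eta}=\cB_\varepsilon$ by construction of the Banach scale, so the cautious question mark in ``$D(A_{-\eta}^{?})$'' can be removed.
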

        \begin{proof}
            We set $ z_t:=A\cI_t$ and $z_t^\prime:=A_{-\eta}Ny_t$ for $t\in[0,T]$. By construction is $z_t\in \cB_0\hookrightarrow \cB_{-\eta}$ and $Ny_t\in \cB_\varepsilon= \cB_{1-\eta}$. Therefore, $z_t^\prime$ is well-defined with values in $\cB_{-\eta}\hookrightarrow\cB_{-\eta-\gamma}$. Consequently, we get $(z,z^\prime)\in C(\cB_{-\eta})\times C(\cB_{-\eta-\gamma})$. \\
            We recall that $A_{-\eta}$ can be viewed as the $\cB_{-\eta}$-realization of $A_{-\eta-2\gamma}$, see Remark~\ref{rem:Extrapolation}.~Then for $x\in \cB_{1-\eta}$, we have the equality $A_{-\eta-2\gamma}x=A_{-\eta}x$. Now we let $0\leq s<t\leq T$ and obtain, using additionally the fact that $A_{-\eta-2\gamma}\in \cL(\cB_{1-\eta-2\gamma},\cB_{-\eta-2\gamma})$
            \begin{align*}
                \abs{z^\prime_{t,s}}_{-\eta-2\gamma} =\abs{A_{-\eta}Ny_{t,s}}_{-\eta-2\gamma}\lesssim \abs{Ny_{t,s}}_{1-\eta-2\gamma}=\abs{Ny_{t,s}}_{\varepsilon-2\gamma}\lesssim \abs{Ny_{t,s}}_{\varepsilon-\gamma}\leq  \left[Ny\right]_{\gamma,\varepsilon-\gamma}\abs{t-s}^\gamma,
            \end{align*}
            where we used Remark \ref{rem:CRPy} in the last inequality. This means that $z^\prime \in C^\gamma( \cB_{-\eta-2\gamma})$. The tricky part is to estimate the remainder $R^z_{t,s}:= z_{t,s}-z^\prime_s X_{t,s}.$
            For this reason, we rewrite $R^z$, so that we can use the estimate \eqref{estimate:NewIntegral} derived for the integral remainder $\cR^{Ny}$:
            \begin{align}
                \begin{split}\label{eq:remainder}
                    R^z_{t,s}&=A\int_0^t S_{t-r}Ny_r~\txtd \X_r - A\int_0^s S_{s-r}Ny_r~\txtd \X_r-A_{-\eta}Ny_sX_{t,s}\\
                    & = A\left(\int_s^t S_{t-r}Ny_r~\txtd \X_r + (S_{t-s}-\Id )\int_0^s S_{s-r}Ny_r~\txtd \X_r  \right)+AS_{t-s}Ny^\prime_s\XX_{t,s}\\
                    & - AS_{t-s}Ny^\prime_s\XX_{t,s}+AS_{t-s}Ny_sX_{t,s}-AS_{t-s}Ny_sX_{t,s}-A_{-\eta}Ny_sX_{t,s}\\
                    &=A\cR^{Ny}_{t,s}+AS_{t-s}Ny^\prime_s \XX_{t,s}+(AS_{t-s}-A_{-\eta})Ny_sX_{t,s}+A(S_{t-s}-\Id)\cI_s.
                \end{split}
            \end{align}
            With this representation we get for $\vartheta\in \{\gamma, 2\gamma  \}$
            \begin{align*}
                \abs{R^z_{t,s}}_{-\eta-\vartheta}&\leq \underbrace{\abs{A\cR^{Ny}_{t,s}}_{-\eta-\vartheta}}_{I_1}+\underbrace{\abs{AS_{t-s}Ny^\prime_s \XX_{t,s}}_{-\eta-\vartheta}}_{I_2}+\underbrace{\abs{(AS_{t-s}-A_{-\eta})Ny_sX_{t,s}}_{-\eta-\vartheta}}_{I_3}\\
                &+\underbrace{\abs{A(S_{t-s}-\Id)\cI_s}_{-\eta-\vartheta}}_{I_4},
            \end{align*}
            so we can estimate the individual terms separately. Applying Corollary \ref{estimate:NewIntegral} with $\beta:=2\gamma-\vartheta$ entails
            \begin{align*}
                I_1&=\abs{A_{-\eta-\vartheta}\mathcal{R}^{Ny}_{t,s}}_{-\eta-\vartheta}\lesssim \abs{\cR^{Ny}_{t,s}}_{1-\eta -\vartheta}=\abs{\cR^{Ny}_{t,s}}_{\varepsilon-\vartheta}\lesssim \rho_\gamma(\X) \norm{Ny,Ny^\prime}_{X,2\gamma,\varepsilon}\abs{t-s}^{\gamma+ \vartheta}\\ 
                &\leq \rho_\gamma(\X) \norm{Ny,Ny^\prime}_{X,2\gamma,\varepsilon}\abs{t-s}^{\vartheta}T^\gamma,
            \end{align*}
             where we note that due to Corollary \ref{cor:RoughConvInDomain} it holds that $\cR^{Ny}_{t,s}\in D(A)=\cB_1$. To deal with the second term, we use $S_{t-s}\in\cL(\cB_{\varepsilon-\gamma})$ and $S_{t-s}Ny_s^\prime \in \cB_{\varepsilon-\gamma+1}\hookrightarrow \cB_1$ to further infer that
            \begin{align*}
                I_2&\lesssim \rho_\gamma(\X) \abs{S_{t-s}Ny_s^\prime}_{1-\eta-\vartheta}\abs{t-s}^{2\gamma}\lesssim \rho_\gamma(\X)  \abs{Ny_s^\prime}_{\varepsilon-\vartheta}\abs{t-s}^{2\gamma}\\ 
                &\lesssim \rho_\gamma(\X) \abs{Ny_s^\prime}_{\varepsilon-\gamma}\abs{t-s}^{2\gamma}\leq \rho_\gamma(\X) \norm{Ny,Ny^\prime}_{X,2\gamma,\varepsilon}\abs{t-s}^{\vartheta}T^{2\gamma-\vartheta}.
            \end{align*}
            Combining Remark \ref{rem:Extrapolation} with the smoothing property \eqref{hg:1} to get the estimate
            \begin{align*}
                I_3&\lesssim \rho_\gamma (\X)  \abs{(AS_{t-s}-A_{-\eta})Ny_s}_{-\eta-\vartheta}\abs{t-s}^\gamma\\
                &=\rho_\gamma (\X)  \abs{A_{-\eta-\vartheta}(S_{t-s}-\Id)Ny_s}_{-\eta-\vartheta}\abs{t-s}^\gamma\\
                &\lesssim \rho_\gamma (\X) \abs{(S_{t-s}-\Id)Ny_s}_{1-\eta-\vartheta}\abs{t-s}^\gamma \\
                &= \rho_\gamma (\X) \abs{(S_{t-s}-\Id)Ny_s}_{\varepsilon-\vartheta}\abs{t-s}^\gamma \\
                & \lesssim \rho_\gamma (\X) \|S_{t-s}-\Id\|_{\cL(\cB_\varepsilon,\cB_{\varepsilon-\vartheta})} |Ny_s|_{\varepsilon} |t-s|^\gamma \\
                &\lesssim \rho_\gamma(\X) \|Ny\|_{\infty,\varepsilon}\abs{t-s}^{\gamma+\vartheta}\\
                &\leq \rho_\gamma(\X) \norm{Ny,Ny^\prime}_{X,2\gamma,\varepsilon} \abs{t-s}^{\vartheta}T^{\gamma}.
            \end{align*}
            In order to estimate $I_4$ we first apply~\eqref{hg:1} to obtain
            \begin{align*}
                I_4&=\abs{A_{-\eta-\vartheta}(S_{t-s}-\Id)\cI_s}_{-\eta-\vartheta}\lesssim\abs{(S_{t-s}-\Id)\cI_s}_{1-\eta-\vartheta}\\
                &= \abs{(S_{t-s}-\Id)\cI_s}_{\varepsilon-\vartheta}\lesssim \abs{\cI_s}_{\varepsilon} \abs{t-s}^\vartheta,
            \end{align*}
            \tim{where we used again Remark \ref{rem:Extrapolation} for $A(S_{t-s}-\Id)\cI_s=A_{-\eta-\vartheta}(S_{t-s}-\Id)\cI_s$}. Now we can use a similar decomposition as in \eqref{eq:remainder} for $\cI_s$. This leads, together with \eqref{hg:2}, \eqref{estimate:NewIntegral} and the fact that $S_t\in \cL(\cB_\varepsilon)$ to
            \begin{align*}
                I_4&\lesssim \left(\abs{\cR^{Ny}_{s,0}}_{\varepsilon}+\abs{S_sNy_0X_{s,0}}_{\varepsilon}+\abs{S_sNy_0^\prime \XX_{s,0}}_{\varepsilon} \right)\abs{t-s}^{\vartheta}\\
                &\lesssim \rho_\gamma(\X)\left(\norm{Ny,Ny^\prime}_{X,2\gamma,\varepsilon}s^\gamma+\abs{S_sNy_0}_\varepsilon s^\gamma+\abs{S_sNy_0^\prime}_\varepsilon s^{2\gamma}\right)\abs{t-s}^{\vartheta}\\
                &\lesssim \rho_\gamma(\X)\left(\norm{Ny,Ny^\prime}_{X,2\gamma,\varepsilon}s^\gamma+\norm{Ny,Ny^\prime}_{X,2\gamma,\varepsilon} s^\gamma+\abs{Ny_0^\prime}_{\varepsilon-\gamma} s^{\gamma}\right)\abs{t-s}^{\vartheta}\\ 
                &\leq 3 \rho_\gamma(\X) \norm{Ny,Ny^\prime}_{X,2\gamma,\varepsilon} \abs{t-s}^{\vartheta} T^\gamma.
            \end{align*}
            Putting all the previous estimates together, we conclude that
            \begin{align*}
                \left[R^z\right]_{\vartheta,-\eta-\vartheta}\lesssim \rho_\gamma (\X) \norm{Ny,Ny^\prime}_{X,2\gamma,\varepsilon},
            \end{align*}
            so $R^z\in C^\gamma(\cB_{-\eta-\gamma})\cap C^{2\gamma}(\cB_{-\eta-2\gamma})$ which completes the proof.
            \qed\\
        \end{proof}\\
        Even if Theorem~\ref{thm:RoughConv} is an interesting result on its own, the statement is not enough for our purposes due to the presence of the operator $A$ in front of the rough integral. In particular, it is not possible to show that~\eqref{eq:intro} has a global solution working with the controlled rough path $(A\cI, A_{-\eta}Ny)\in \cD^{2\gamma}_{X,-\eta}$ and using the techniques in~\cite{HN21}, even though we could establish a local solution using a fixed-point argument.
        Therefore, we further show that we can plug the operator $A$ in the rough integral, see~\cite{m} for an analogous result for additive fractional noise. As a consequence of Corollary \ref{cor:RoughConv}, the limit on the right-hand side of \eqref{def:RoughConv}, exists in $\cB_{\varepsilon-2\gamma}$. So
        the equality 
        \begin{align}\label{interchangeIntegral}
            \widetilde{A}\int_0^{t} S_{t-r} N(y_r)~\txtd\X_r=\int_0^{t} \widetilde{A}S_{t-r} N(y_r)~\txtd\X_r,
        \end{align}
        holds for a bounded, and therefore continuous, operator $\widetilde{A}$ with domain $\cB_{\varepsilon-2\gamma}$.
        However, in our case, we only have $A\in \cL(\cB_1,\cB_0)$ and $\varepsilon-2\gamma<1$. Nevertheless, we can show the following statement. 
        \begin{lemma}\label{lemma:3.7}
            Under the assumptions of Corollary \ref{cor:RoughConv}, the limit 
            \begin{align*}
                \int_s^t S_{t-r}Ny_r~\txtd \X_r= \lim\limits_{|\mathcal{P}|\to 0} \sum\limits_{[u,v]\in\mathcal{P}} S_{t-u}N\left(y_{u}X_{v,u} + y'_{u}\mathbb{X}_{v,u}\right),
            \end{align*}
            exists $0\leq s<t\leq T$ in the $\cB_{\alpha_2-2\gamma+\beta}$ topology for every $\beta\in [0,3\gamma)$.
        \end{lemma}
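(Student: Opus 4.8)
The plan is to compare the Riemann sums directly against the limit $\cI_{s,t}:=\int_s^t S_{t-r}Ny_r~\txtd\X_r$, which by Corollary~\ref{cor:RoughConv} already exists in $\cB_{\alpha_2-2\gamma}$, obeys \eqref{estimate:NewIntegral} for all $0\le\beta<3\gamma$, and hence lies in $\cB_{\alpha_2-2\gamma+\beta}$ for every such $\beta$. Fix $0\le s<t\le T$ and $\beta\in[0,3\gamma)$. Additivity of the integral over subintervals together with the semigroup law $S_{t-u}=S_{t-v}S_{v-u}$ gives, for any partition $\cP$ of $[s,t]$, the telescoping identity
\begin{align*}
\cI_{s,t}-\sum_{[u,v]\in\cP}S_{t-u}N\big(y_uX_{v,u}+y'_u\XX_{v,u}\big)=\sum_{[u,v]\in\cP}S_{t-v}\,\cR^{Ny}_{v,u},
\end{align*}
so everything reduces to showing that this remainder sum converges to $0$ in $\cB_{\alpha_2-2\gamma+\beta}$ as $|\cP|\to0$.

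If $\beta<3\gamma-1$, this is immediate: bounding $S_{t-v}$ by its (uniform, cf.\ \eqref{hg:2}) norm on $\cB_{\alpha_2-2\gamma+\beta}$ and using \eqref{estimate:NewIntegral}, the remainder sum is $\lesssim\sum_{[u,v]\in\cP}(v-u)^{3\gamma-\beta}\le|\cP|^{3\gamma-\beta-1}(t-s)\to0$. Assume henceforth $3\gamma-1\le\beta<3\gamma$, where the smoothing of the semigroup has to be used. Pick $\beta'$ with $\max\{0,\beta-1\}<\beta'<3\gamma-1$ --- a nonempty range, as $\gamma>\tfrac13$ and $\beta<3\gamma$ --- so that $3\gamma-\beta'>1$ and $q:=\beta-\beta'\in(0,1)$, and split $\cP$ into the \emph{far} intervals ($t-v>|\cP|$) and the \emph{near} ones ($t-v\le|\cP|$, all contained in $[t-2|\cP|,t]$). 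On a far interval, \eqref{hg:2} together with \eqref{estimate:NewIntegral} at exponent $\beta'$ give $|S_{t-v}\cR^{Ny}_{v,u}|_{\alpha_2-2\gamma+\beta}\lesssim(t-v)^{-q}(v-u)^{3\gamma-\beta'}\le|\cP|^{3\gamma-\beta'-1}(t-v)^{-q}(v-u)$; since $r\mapsto(t-r)^{-q}$ is increasing and the far right-endpoints stay below $t-|\cP|$, the sum $\sum_{\mathrm{far}}(t-v)^{-q}(v-u)$ is an upper Darboux sum for the convergent integral $\int_s^t(t-r)^{-q}~\txtd r$ with overshoot $\lesssim|\cP|\cdot|\cP|^{-q}=|\cP|^{1-q}$, so the far part is $\lesssim|\cP|^{3\gamma-\beta'-1}\to0$.

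The near block is the crux, because there $t-v$ can be arbitrarily small even as $|\cP|\to0$ and the naive bound $\sum_{\mathrm{near}}(v-u)^{3\gamma-\beta}$ is useless when $3\gamma-\beta\le1$. I would split the near intervals into the \emph{long} ones ($v-u>t-v$) and the \emph{short} ones ($v-u\le t-v$). For the long near intervals, disjointness of interiors together with $v-u>t-v$ forces $t-v_1>2(t-v_2)$ for any two of them with $v_1<v_2$; consequently their lengths decay geometrically as one approaches $t$, and $\sum_{\mathrm{long}}(v-u)^{3\gamma-\beta}\lesssim|\cP|^{3\gamma-\beta}\to0$ by comparison with a geometric series --- the interval adjacent to $t$, where $S_{t-v}=\Id$, is itself long and falls into this bucket. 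For the short near intervals one again estimates $|S_{t-v}\cR^{Ny}_{v,u}|_{\alpha_2-2\gamma+\beta}\lesssim(t-v)^{-q}(v-u)^{3\gamma-\beta'}$; the main obstacle of the whole argument is that the associated upper Darboux sum for $\int(t-r)^{-q}~\txtd r$ now has its overshoot concentrated next to $t$, so the contributions of intervals clustering at $t$ must be reabsorbed one at a time --- a singular-sewing type estimate. One may instead bypass the far/near splitting and rerun the convolutional sewing argument behind Theorem~\ref{integral} (see \cite{GubinelliTindel,GHN}) directly in $\cB_{\alpha_2-2\gamma+\beta}$, treating $S_{t-\cdot}$ as a singular convolution kernel of order $q$; or use the interpolation inequality \eqref{interpolation:ineq} between $\cB_{\alpha_2-2\gamma}$ and $\cB_{\alpha_2-2\gamma+\beta''}$, $\beta''\in(\beta,3\gamma)$, to reduce the statement to a uniform-in-$\cP$ bound for the remainder sum in the finer space.
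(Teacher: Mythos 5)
Your argument takes a genuinely different route from the paper's. The paper reruns the dyadic sewing argument of \cite[Theorem 4.1, 4.5]{GHN} in the $\cB_{\alpha_2-2\gamma+\beta}$ topology, estimating the difference between the approximations along consecutive dyadic partitions and summing a geometric series, then identifying the resulting limit with the one already produced in $\cB_{\alpha_2-2\gamma}$. You instead compare an arbitrary Riemann sum directly against the known limit $\cI_{s,t}$ via the telescoping identity $\cI_{s,t}-\sum_{[u,v]}S_{t-u}N(y_uX_{v,u}+y'_u\XX_{v,u})=\sum_{[u,v]}S_{t-v}\cR^{Ny}_{v,u}$, and then control the error by splitting the partition into far, long-near and short-near intervals. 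The telescope, the treatment of the case $\beta<3\gamma-1$, the far estimate (with overshoot $\lesssim|\cP|^{1-q}$), and the geometric-decay argument for the long-near intervals are all correct.

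Where you stop, however, is not actually the crux. The short-near case is the easiest piece and needs no singular-sewing machinery: for a short-near interval you have $v-u\le t-v$, hence $t-u\le 2(t-v)$ and $(t-v)^{-q}\le 2^q(t-u)^{-q}$. Since $r\mapsto(t-r)^{-q}$ is increasing, $(t-u)^{-q}$ is its minimum on $[u,v]$, so $(t-v)^{-q}(v-u)\le 2^q\int_u^v(t-r)^{-q}~\txtd r$; in other words, shortness forces the upper and lower Darboux terms to be comparable, so there is no overshoot to reabsorb. Summing and using that the short-near intervals lie in $[t-2|\cP|,t]$ gives
\begin{align*}
\sum_{\mathrm{short}}(t-v)^{-q}(v-u)^{3\gamma-\beta'}\le|\cP|^{3\gamma-\beta'-1}\sum_{\mathrm{short}}(t-v)^{-q}(v-u)\le 2^q|\cP|^{3\gamma-\beta'-1}\int_{t-2|\cP|}^{t}(t-r)^{-q}~\txtd r\lesssim|\cP|^{3\gamma-\beta},
\end{align*}
which vanishes since $\beta<3\gamma$. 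With this one-line estimate inserted, your proof is complete, and it has the mild advantage of handling arbitrary partitions directly rather than first treating dyadic ones. Note also that the first fallback you propose at the end --- rerunning the convolutional sewing argument in $\cB_{\alpha_2-2\gamma+\beta}$ with $S_{t-\cdot}$ viewed as a singular kernel --- is precisely what the paper does.
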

        \begin{proof}
            For the sake of completeness, we indicate a sketch of the proof of this statement based on a classical sewing lemma, see~\cite[Theorem 4.1]{GHN} and~\cite[Theorem 2.4]{GHairer}. Let $\mathcal{P}^n:=\{t_i=s+2^{-n} i (t-s)~:~i=0,\ldots, 2^n \}$ be the $n$-th dyadic partition of the interval $[s,t]$, and $\cI^{\mathcal{P}^n}_{t,s}:=\sum_{[u,v]\in\mathcal{P}^n} S_{t-u}N\left(y_{u}X_{v,u} + y'_{u}\mathbb{X}_{v,u}\right):=\sum_{[u,v]\in\mathcal{P}^n} S_{t-u}\xi_{v,u}$ the sum associated to the partition $\mathcal{P}^n$. To prove now that
            $(\cI_{t,s}^{\mathcal{P}^n})_{n\in \N}$ is a Cauchy sequence, set $m=\frac{v-u}{2^{n+1}}$ the midpoint of an interval $[u,v]$. Then we can write
            \begin{align*}
                \cI^{\mathcal{P}^n}_{t,s}-\cI^{\mathcal{P}^{n+1}}_{t,s}&=\sum_{[u,v]\in\mathcal{P}^n} S_{t-u}\xi_{v,u}-\sum_{[u,v]\in\mathcal{P}^n} S_{t-u}\xi_{m,u}+S_{t-m}\xi_{v,m}\\
                &=\sum_{[u,v]\in\mathcal{P}^n} S_{t-u}(\xi_{v,u}-\xi_{m,u}-\xi_{v,m})+S_{t-m}(S_{m-u}-\textrm{id})\xi_{v,m}.
            \end{align*}
            With this representation one can show that
            \begin{align}\label{est:clear}
                \abs{\cI^{\mathcal{P}^n}_{t,s}-\cI^{\mathcal{P}^{n+1}}_{t,s}}_{\alpha_2-2\gamma+\beta}\leq C_\xi 2^{-n(3\gamma-1-\delta)} \abs{t-s}^{3\gamma-\beta},
            \end{align}
            for a $\delta\in (\beta-1,3\gamma-1)$, using similar ideas as in \cite[Theorem 4.1, 4.5]{GHN}. The only difference is the appearance of the operator $N\in \bigcap_{i=0}^2 \cL(\widetilde{\cB}_{\alpha_1-i\gamma},\cB_{\alpha_2-i\gamma})$, which is bounded and therefore only changes the space we end up with. For instance, we consider the first part of the sum. With the help of Chen's relation it can be shown that
            \begin{align*}
                \xi_{v,u}-\xi_{m,u}-\xi_{v,m}=R^{Ny}_{u,m}X_{v,m}+Ny^\prime_{u,m}\XX_{v,m},
            \end{align*}
            and therefore, with the regularity property \eqref{hg:2} and the H\"older conditions of the controlled rough path $(Ny,Ny^\prime)\in\cD^{2\gamma}_{X,\alpha_2}$, one gets
            \begin{align*}
                &\abs{ \sum_{[u,v]\in\mathcal{P}^n} S_{t-u}(\xi_{v,u}-\xi_{m,u}-\xi_{v,m})}_{\alpha_2-2\gamma+\beta}\\ &\leq \sum_{[u,v]\in\mathcal{P}^n} \abs{S_{t-u}R^{Ny}_{u,m}X_{v,m}}_{\alpha_2-2\gamma+\beta}+\abs{S_{t-u}Ny^\prime_{u,m}\XX_{v,m}}_{\alpha_2-2\gamma+\beta}\\ 
                &\lesssim \rho_\gamma (\X) \norm{Ny,Ny^\prime}_{X,2\gamma,\alpha_2}\sum_{[u,v]\in\mathcal{P}^n}\abs{t-m}^{-\beta}(\abs{v-m}^\gamma\abs{m-u}^{2\gamma}+\abs{v-m}^{2\gamma}\abs{m-u}^\gamma).
            \end{align*}
            The second term can be treated analogously which means that~\eqref{est:clear} holds for a constant $C_\xi$ which depends on the H\"older norms of $\xi$ and on the semigroup. Furthermore, since the right-hand side of~\eqref{est:clear} is summable over $n$, the sequence $(\cI_{t,s}^{\mathcal{P}^n})_{n\in \N}$ is Cauchy in $\cB_{\alpha_2-2\gamma+\beta}$ and therefore has a limit $\widetilde{\cI}_{t,s}\in \cB_{\alpha_2-2\gamma+\beta}$. Since $\cB_{\alpha_2-2\gamma+\beta}\hookrightarrow \cB_{\alpha_2-2\gamma}$, and the limit in Corollary~\ref{cor:RoughConv} is unique, we get that $\widetilde{\cI}_{t,s}=\cI_{t,s}$. In conclusion, the limit exists in the $\cB_{\alpha_2-2\gamma+\beta}$ topology.
            \qed
        \end{proof}\\
        
        Now, going back to the situation in~\eqref{interchangeIntegral}, we choose $\beta:=2\gamma-\varepsilon+1<3\gamma$, due to our restriction on $\varepsilon$. Since $A$ satisfies Assumption \ref{ass}, Theorem \ref{thm:EpolSemigroup} ensures that every extrapolated operator generates again an analytic semigroup. Together with Theorem \ref{thm:EpolOperator}, Remark \ref{rem:Extrapolation} and the fact that $Ny\in \cB_{\varepsilon}=\cB_{1-\eta}$, this leads to
        \begin{align}\label{newInterchangeIntegral}
            A\int_0^{t} S_{t-r} Ny_r~\txtd\X_r=\int_0^{t} S_{t-r}A_{-\eta} Ny_r~\txtd\X_r.
        \end{align}
       
      \begin{remark}\label{two:i}
          To make sure that the right-hand side is well-defined as a controlled rough integral, we need to find a Gubinelli derivative for $A_{-\eta}Ny$. A natural choice would be $A_{-\eta}Ny^\prime$, but since $y^\prime$ loses spatial regularity, this is not well-defined. Therefore, in order to choose an appropriate Gubinelli derivative for $A_{-\eta}N y$, the extrapolated operator $A_{-\eta}$ has to be lifted. Due to this reason, one can show that $(A_{-\eta}Ny, A_{-\sigma}Ny')\in \cD^{2\gamma}_{X,-\eta}$ holds with $\sigma:=\eta+\gamma$. In order to avoid working with two different indices for the extrapolation operator in the path component and its Gubinelli derivative, we rely on Remark \ref{rem:Extrapolation}. Therefore we have $A_{-\eta}Ny=A_{-\sigma}Ny$ since $-\eta>-\sigma$ and $Ny\in \cB_\varepsilon=\cB_{1-\eta}\hookrightarrow\cB_{1-\sigma}$.
      \end{remark}
        This enables us to formulate the next result.
    
        \begin{lemma}\label{lem:GubDeriv}
            For every $(y,y^\prime)\in \widetilde{\cD}^{2\gamma}_{X,\alpha}$ we have $(A_{-\sigma}Ny,A_{-\sigma}Ny^\prime)\in \cD^{2\gamma}_{X,-\eta}$.
        \end{lemma}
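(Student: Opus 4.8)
The plan is to realise the composition $A_{-\sigma}\circ N$ as a single bounded operator between two monotone interpolation scales and then to invoke Lemma~\ref{lem:CRP}. Recall from the paragraph preceding Corollary~\ref{cor:RoughConvInDomain} that $N\in\bigcap_{i=0}^{2}\cL(\widetilde{\cB}_{\alpha-i\gamma},\cB_{\varepsilon-i\gamma})$, and that $\eta=1-\varepsilon$, $\sigma=\eta+\gamma$ with $\varepsilon=\tfrac12+\tfrac1{2p}-\delta$ for a small $\delta>0$. Since $\varepsilon-i\gamma=1-\eta-i\gamma$ for $i=0,1,2$ and $-\eta-i\gamma=(\varepsilon-i\gamma)-1$, the statement will follow as soon as we show that $A_{-\sigma}$ (suitably extended) maps $\cB_{1-\eta-i\gamma}$ boundedly into $\cB_{-\eta-i\gamma}$ for $i=0,1,2$.

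Next I would record the operator bounds. With the explicit values above one has $\eta<1$, hence $\eta+2\gamma<2=m$ because $\gamma\le\tfrac12$; consequently all the indices $-\eta$, $-\sigma=-\eta-\gamma$ and $-\sigma-\gamma=-\eta-2\gamma$, as well as their shifts by $+1$, lie in the range $[-m,\infty)$ on which the interpolation--extrapolation scale $(\cB_\beta,A_\beta)$ and its operators are defined (Theorem~\ref{thm:EpolOperator}, Theorem~\ref{thm:EpolSemigroup}). By Theorem~\ref{thm:EpolOperator}, each $A_{-\sigma-j\gamma}$ with $j\in\{0,1\}$ is an isomorphism from $\cB_{1-\sigma-j\gamma}$ onto $\cB_{-\sigma-j\gamma}$, and by Remark~\ref{rem:Extrapolation} these operators are consistent with one another and with $A_{-\eta}$; I therefore keep the single symbol $A_{-\sigma}$ for all of them. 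This yields $A_{-\sigma}\in\cL(\cB_{1-\eta},\cB_{-\eta})$ (it coincides with $A_{-\eta}$ on $\cB_{1-\eta}\hookrightarrow\cB_{1-\sigma}$ by Remark~\ref{rem:Extrapolation}), $A_{-\sigma}\in\cL(\cB_{1-\eta-\gamma},\cB_{-\eta-\gamma})$ (this is $A_{-\sigma}$ on its natural domain $\cB_{1-\sigma}$), and $A_{-\sigma}\in\cL(\cB_{1-\eta-2\gamma},\cB_{-\eta-2\gamma})$ (this is the extension $A_{-\sigma-\gamma}$ on $\cB_{1-\sigma-\gamma}$). Composing with $N$ gives $M:=A_{-\sigma}N\in\bigcap_{i=0}^{2}\cL(\widetilde{\cB}_{\alpha-i\gamma},\cB_{-\eta-i\gamma})$.

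Then Lemma~\ref{lem:CRP}, applied to the monotone interpolation scales $(\widetilde{\cB}_\beta)$ and $(\cB_\beta)$ (the latter being a monotone interpolation family by the discussion following Theorem~\ref{thm:EpolOperator}) with the operator $M$ and the indices $\alpha_1:=\alpha$, $\alpha_2:=-\eta$, gives at once that $(My,My')\in\cD^{2\gamma}_{X,-\eta}$ for every $(y,y')\in\widetilde{\cD}^{2\gamma}_{X,\alpha}$. Since $My=A_{-\sigma}Ny$, $My'=A_{-\sigma}Ny'$ and, exactly as in that proof, the remainder is $R^{My}_{t,s}=M R^y_{t,s}=A_{-\sigma}NR^y_{t,s}$, this is precisely the claimed assertion. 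Equivalently, one may split the argument in two steps: $(Ny,Ny')\in\cD^{2\gamma}_{X,\varepsilon}$ by Lemma~\ref{lem:CRP} as already used before Corollary~\ref{cor:RoughConvInDomain}, and then applying $A_{-\sigma}$ lowers the spatial index by one, once more by Lemma~\ref{lem:CRP} with $(\cB_\beta)$ in both slots.

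The only genuinely delicate point is the index bookkeeping carried out in the middle paragraph: one must check $\eta+2\gamma\le m$ so that nothing leaves the extrapolation scale, and invoke Remark~\ref{rem:Extrapolation} to make sense of $A_{-\sigma}$ acting on the increments $Ny'_{t,s}$ and $R^{Ny}_{t,s}$, which belong to $\cB_{\varepsilon-2\gamma}=\cB_{1-\sigma-\gamma}$, a space strictly larger than the natural domain $\cB_{1-\sigma}$ of $A_{-\sigma}$. Apart from that, no estimates beyond those already carried out in the proof of Lemma~\ref{lem:CRP} are required.
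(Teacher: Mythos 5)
Your proof is correct, and it rests on exactly the same two ingredients as the paper's: the mapping properties $N\in\bigcap_{i=0}^{2}\cL(\widetilde{\cB}_{\alpha-i\gamma},\cB_{\varepsilon-i\gamma})$ and the fact, via Remark~\ref{rem:Extrapolation} and Theorem~\ref{thm:EpolOperator}, that the compatible extrapolated operators $A_{-\eta-i\gamma}$ map $\cB_{1-\eta-i\gamma}$ boundedly (indeed isomorphically) onto $\cB_{-\eta-i\gamma}$. The only difference is organizational. The paper opens its proof by asserting that Lemma~\ref{lem:CRP} \emph{cannot} be used -- because $A_{-\sigma}$, taken literally with domain $\cB_{1-\sigma}$, is not defined on all of $\cB_{\varepsilon-2\gamma}$ -- and then verifies the three estimates for $z'_{t,s}$ and $R^z_{t,s}=A_{-\sigma}R^{Ny}_{t,s}$ by hand, invoking Remark~\ref{rem:Extrapolation} inside each estimate to replace $A_{-\sigma}$ by $A_{-\eta-2\gamma}$ or $A_{-\sigma-(\vartheta-\gamma)}$ as needed. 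You instead perform that identification once and for all, package $M:=A_{-\sigma}N$ (meaning the compatible family $A_{-\eta-i\gamma}\circ N$) as a single element of $\bigcap_{i=0}^{2}\cL(\widetilde{\cB}_{\alpha-i\gamma},\cB_{-\eta-i\gamma})$, and then apply Lemma~\ref{lem:CRP} with $\alpha_1=\alpha$, $\alpha_2=-\eta$. This is a legitimate and arguably cleaner reorganization: it shows the paper's disclaimer only rules out applying Lemma~\ref{lem:CRP} to $A_{-\sigma}$ \emph{after} $N$, not to the composition. Your index bookkeeping ($\eta+2\gamma<2=m$, so all spaces stay within the scale $[-2,\infty)$) is the right thing to check and is carried out correctly; the estimates you would need are verbatim those in the paper.
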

        \begin{proof}
            Note that we cannot use Lemma \ref{lem:CRP}, since $A_{-\sigma}$ is not defined for every element in $\cB_{\varepsilon-2\gamma}$. So we have to take advantage of the fact that $Ny_t\in \cB_{\varepsilon}=\cB_{1-\eta}$ and $Ny_t^\prime\in \cB_{\varepsilon-\gamma}=\cB_{1-\sigma}$. This leads to  $z_t:=A_{-\sigma}Ny_t\in\cB_{-\eta}$ and $z^\prime_t:=A_{-\sigma}Ny^\prime_t\in \cB_{-\sigma}=\cB_{-\eta-\gamma}$. Furthermore, we have due to Remark \ref{rem:Extrapolation} 
            \begin{align*}
                \abs{z'_{t,s}}_{-\eta-2\gamma}=
            \abs{A_{-\sigma} N y^\prime_{t,s}}_{-\eta-2\gamma}&=\abs{A_{-\eta-2\gamma} N y^\prime_{t,s}}_{-\eta-2\gamma}\lesssim \abs{N y^\prime_{t,s}}_{1-\eta-2\gamma}\\
            &=\abs{N y^\prime_{t,s}}_{\varepsilon-2\gamma}\lesssim \abs{t-s}^{\gamma} \left[Ny^\prime\right]_{\gamma,\varepsilon-2\gamma}.
            \end{align*}
            To investigate the remainder $R^z_{t,s}:=A_{-\sigma}R^{Ny}_{t,s}\in \cB_{1-\sigma}$, we let $\vartheta\in \{\gamma,2\gamma\}$ and establish 
            \begin{align*}
                \abs{R^z_{t,s}}_{-\eta-\vartheta}&=\abs{A_{-\sigma}R^{Ny}_{t,s}}_{-\sigma -(\vartheta-\gamma)}=\abs{A_{-\sigma-(\vartheta-\gamma)}R^{Ny}_{t,s}}_{-\sigma -(\vartheta-\gamma)}\lesssim \abs{R^{Ny}_{t,s}}_{1-\sigma-(\vartheta-\gamma)}\\ 
                &=\abs{R^{Ny}_{t,s}}_{\varepsilon-\vartheta}\lesssim \abs{t-s}^\vartheta \left[R^{Ny}\right]_{\vartheta, \varepsilon -\vartheta},
            \end{align*}
            using again Remark \ref{rem:Extrapolation}. Regarding the previous deliberations, this computation concludes the proof.
            \qed
        \end{proof}\\
        
      Consequently this allows us to define the rough convolution.  
       \begin{lemma}\label{sigma}
           The right-hand side of~\eqref{newInterchangeIntegral} is well-defined as a rough convolution with the controlled rough path $(A_{-\sigma} Ny, A_{-\sigma} N y')\in \cD^{2\gamma}_{X,-\eta}$. Moreover, a Gubinelli derivative of
            $\int_0^t S_{t-r} A_{-\sigma} Ny_r~\txtd \mathbf{X}_r$ is $A_{-\sigma} N y_t$.
       \end{lemma}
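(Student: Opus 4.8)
The plan is to reduce the statement entirely to Lemma~\ref{lem:GubDeriv}, Theorem~\ref{integral} and Corollary~\ref{cor:higherreg}, applied to the controlled rough path $(A_{-\sigma}Ny,A_{-\sigma}Ny^\prime)\in\cD^{2\gamma}_{X,-\eta}$ inside the interpolation--extrapolation scale $(\cB_\beta)_{\beta\in[-2,\infty)}$ generated by $A$, taken around the base regularity $-\eta$. First I would record that the part of the scale around $-\eta$ genuinely fits the abstract framework of Section~\ref{preliminaries}: all indices appearing below ($-\eta$, $-\eta-\gamma=-\sigma$, $-\eta-2\gamma=-\sigma-\gamma$, and $1-\eta=\varepsilon$) lie in $[-2,\infty)$; by Theorem~\ref{thm:EpolOperator} the spaces $(\cB_{-\eta+\beta})_\beta$ form a monotone family of interpolation spaces in the sense of Definition~\ref{raum} (in particular they satisfy~\eqref{interpolation:ineq}); and by Theorem~\ref{thm:EpolSemigroup} the extrapolated operator $A_{-\eta}$ again generates an analytic semigroup which is precisely the restriction of $(S_t)_{t\ge 0}$ to $\cB_{-\eta}$. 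Since $D(A_{-\eta})=\cB_{1-\eta}$, the standard smoothing estimates~\eqref{hg:1}--\eqref{hg:2} are therefore available along this part of the scale, which is all that Theorem~\ref{integral} and Corollary~\ref{cor:higherreg} require.

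Given this, Theorem~\ref{integral} applies with $\alpha:=-\eta$ to $(A_{-\sigma}Ny,A_{-\sigma}Ny^\prime)$ and yields the rough integral $\int_s^t S_{t-r}A_{-\sigma}Ny_r~\txtd\X_r$ as the limit of the compensated Riemann sums $\sum_{[u,v]\in\mathcal P} S_{t-u}A_{-\sigma}N\big(y_u X_{v,u}+y^\prime_u\XX_{v,u}\big)$ in $\cB_{-\eta-2\gamma}$, together with the local estimate~\eqref{estimate:integral}. To identify this object with the right-hand side of~\eqref{newInterchangeIntegral} I would invoke Remark~\ref{rem:Extrapolation} (equivalently Remark~\ref{two:i}): since $Ny_r\in\cB_\varepsilon=\cB_{1-\eta}\hookrightarrow\cB_{1-\sigma}$ we have $A_{-\sigma}Ny_r=A_{-\eta}Ny_r$, so the rough convolution of the controlled rough path $(A_{-\sigma}Ny,A_{-\sigma}Ny^\prime)$ is exactly $\int_0^t S_{t-r}A_{-\eta}Ny_r~\txtd\X_r$, which is thus well-defined. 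For the Gubinelli derivative I would then apply Corollary~\ref{cor:higherreg}, again with $\alpha:=-\eta$ and using $T\le 1$: it says that the integral map sends $(A_{-\sigma}Ny,A_{-\sigma}Ny^\prime)$ to $\big(\int_0^\cdot S_{\cdot-r}A_{-\sigma}Ny_r~\txtd\X_r,\,A_{-\sigma}Ny_\cdot\big)\in\cD^{2\gamma}_{X,-\eta}$ (in fact with a regularity gain into $\cD^{2\gamma}_{X,-\eta+\theta}$ for any $\theta\in[0,\gamma)$), with the bound~\eqref{est:Integral2}. Hence $A_{-\sigma}Ny$ is a Gubinelli derivative of $\int_0^t S_{t-r}A_{-\sigma}Ny_r~\txtd\X_r$, and combining this with~\eqref{newInterchangeIntegral} also shows that $A\cI=\int_0^\cdot S_{\cdot-r}A_{-\sigma}Ny_r~\txtd\X_r$ is a controlled rough path with this derivative.

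The point I expect to require the most care is not an estimate but the bookkeeping of extrapolation indices: one must be able to take a \emph{single} index $\sigma$ for the extrapolated operator in both the path component and its Gubinelli derivative, instead of being forced into the mismatched pair $(A_{-\eta}Ny,A_{-\sigma}Ny^\prime)$. This is exactly why Lemma~\ref{lem:GubDeriv} was set up with $\sigma=\eta+\gamma$ in both slots and why the consistency relation $A_{-\sigma}Ny=A_{-\eta}Ny$ from Remark~\ref{rem:Extrapolation} is needed. Once that identification is in hand, the argument is a direct citation of Theorem~\ref{integral} and Corollary~\ref{cor:higherreg}, and no computation beyond those already carried out in Lemma~\ref{lem:GubDeriv} and Theorem~\ref{thm:RoughConv} is required.
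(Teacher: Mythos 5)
Your proposal is correct and follows exactly the route the paper intends: the paper states Lemma~\ref{sigma} without proof as an immediate consequence of Lemma~\ref{lem:GubDeriv} combined with Theorem~\ref{integral} and Corollary~\ref{cor:higherreg}, together with the identification $A_{-\sigma}Ny=A_{-\eta}Ny$ from Remark~\ref{rem:Extrapolation} (cf.~Remark~\ref{two:i}). Your write-up simply makes explicit the consistency checks (Theorems~\ref{thm:EpolOperator} and~\ref{thm:EpolSemigroup}) that the paper leaves implicit.
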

        
        Further, since we want to solve equations with multiplicative noise, the next step is to consider the composition of a controlled rough path with a smooth function. In our setting, in contrast to~\cite{GHN}, the nonlinearity is allowed to map between different scales of Banach spaces.
        \begin{lemma} (Composition of a controlled rough path with a smooth function)\label{lem:composition}
            Let $\beta,\delta\in \R$ and $F:\cB_{\beta-\vartheta}\to \widetilde{\cB}_{\beta-\vartheta+\delta}$ two times continuously Fr\'{e}chet differentiable with bounded derivatives for any $\vartheta \in \{0,\gamma, 2\gamma \}$. For $(y,y^\prime)\in \cD^{2\gamma}_{X,\beta}$ we define $(z_t,z_t^\prime):=(F(y_t),DF(y_t)\circ y_t^\prime)$ for $t\in [0,T]$.
            \begin{itemize}
                \itemsep -2pt
                \item[i)] We have $(z,z^\prime)\in \widetilde{\cD}^{2\gamma}_{X,\beta+\delta}$ and the estimate
                \begin{align}\label{est:SmoothComposition}
                     \norm{z,z^\prime}_{X,2\gamma,\widetilde{\cB}_{\beta+\delta}}\lesssim \norm{F}_{C^2}(1+\rho_\gamma(\X))^2 \norm{y,y^\prime}_{X,2\gamma,\cB_\beta}(1+\norm{y,y^\prime}_{X,2\gamma,\cB_\beta}),
                \end{align}
                holds.
                \item[ii)] Assume additionally that $F$ is three times Fr\'{e}chet differentiable with bounded third derivative, and let $(\widetilde{z},\widetilde{z}^\prime)$ be the composition of another controlled rough path $(\widetilde{y},\widetilde{y}^\prime)\in \cD^{2\gamma}_{X,\beta}$ with $F$. Then
                \begin{align}\label{est:SmoothComposition2}
                    \norm{z-\widetilde{z},z^\prime-\widetilde{z}^\prime}_{X,2\gamma,\widetilde{\cB}_{\beta+\delta}} \lesssim \norm{F}_{C^3}&(1+\rho_\gamma(\X))^2(1+\norm{y,y^\prime}_{X,2\gamma,\cB_\beta}+\norm{\widetilde{y},\widetilde{y}^\prime}_{X,2\gamma,\cB_\beta})^2\nonumber\\ &\times \norm{y-\widetilde{y},y^\prime-\widetilde{y}^\prime}_{X,2\gamma,\cB_\beta}
                \end{align}
                is satisfied.
            \end{itemize}
            Where we set $\norm{F}_{C^k}:=\max\limits_{\vartheta\in \{\gamma,2\gamma\}}  \norm{F}_{C^k(\cB_{\beta-\vartheta},\widetilde{\cB}_{\beta-\vartheta+\delta})}$.
        \end{lemma}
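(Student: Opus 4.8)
The plan is to verify, one at a time, the three defining properties of $\widetilde{\cD}^{2\gamma}_{X,\beta+\delta}$ from Definition~\ref{def:crp} for the pair $(z,z^\prime)=(F(y),DF(y)\circ y^\prime)$, reading off the estimate~\eqref{est:SmoothComposition} in the process; part~ii) is then the ``linearised'' version, in which one estimates each building block of $\norm{\cdot}_{X,2\gamma,\widetilde{\cB}_{\beta+\delta}}$ applied to the difference $(z-\widetilde z,z^\prime-\widetilde z^\prime)$. The two analytic ingredients are the first- and second-order mean-value identities
\[
F(b)-F(a)=\int_0^1 DF\bigl(a+\tau(b-a)\bigr)(b-a)~\txtd\tau,\qquad DF(b)-DF(a)=\int_0^1 D^2F\bigl(a+\tau(b-a)\bigr)(b-a)~\txtd\tau,
\]
which are available since each $F:\cB_{\beta-\vartheta}\to\widetilde{\cB}_{\beta-\vartheta+\delta}$ is globally defined and $C^2$, together with the dense embeddings $\cB_{\beta}\hookrightarrow\cB_{\beta-\gamma}\hookrightarrow\cB_{\beta-2\gamma}$, the compatibility of the Fr\'echet derivatives along these embeddings, and the H\"older bound~\eqref{est:hoelder:y} for $[y]_{\gamma,\cB_{\beta-\vartheta}}$.

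For part~i), the continuity of $z$ and $z^\prime$ and the $\infty$-norm terms $\norm{z}_{\infty,\widetilde{\cB}_{\beta+\delta}}$ and $\norm{z^\prime}_{\infty,\widetilde{\cB}_{\beta+\delta-\gamma}}$ are immediate from the boundedness of $F$ and $DF$ between the relevant scales, noting that $DF(y_t)$ acts on $y^\prime_t\in\cB_{\beta-\gamma}$ as an operator $\cB_{\beta-\gamma}\to\widetilde{\cB}_{\beta-\gamma+\delta}$. For the H\"older seminorm of $z^\prime$ one uses the splitting
\[
z^\prime_{t,s}=DF(y_t)\circ y^\prime_{t,s}+\bigl(DF(y_t)-DF(y_s)\bigr)\circ y^\prime_s,
\]
bounding the first summand in $\widetilde{\cB}_{\beta+\delta-2\gamma}$ by $\norm{F}_{C^2}[y^\prime]_{\gamma,\cB_{\beta-2\gamma}}\abs{t-s}^\gamma$ (using $DF(y_t)\in\cL(\cB_{\beta-2\gamma},\widetilde{\cB}_{\beta-2\gamma+\delta})$) and the second, via the mean-value identity, by $\norm{F}_{C^2}\abs{y_{t,s}}_{\cB_{\beta-\gamma}}\abs{y^\prime_s}_{\cB_{\beta-\gamma}}\lesssim\norm{F}_{C^2}[y]_{\gamma,\cB_{\beta-\gamma}}\norm{y^\prime}_{\infty,\cB_{\beta-\gamma}}\abs{t-s}^\gamma$. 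For the remainder one uses the identity
\[
R^z_{t,s}=z_{t,s}-z^\prime_s X_{t,s}=DF(y_s)\circ R^y_{t,s}+\int_0^1\bigl(DF(y_s+\tau y_{t,s})-DF(y_s)\bigr)\circ y_{t,s}~\txtd\tau,
\]
obtained by subtracting $DF(y_s)\circ y^\prime_s X_{t,s}$ from the first-order expansion of $F(y_t)-F(y_s)$ and recalling $R^y_{t,s}=y_{t,s}-y^\prime_sX_{t,s}$. For $\vartheta\in\{\gamma,2\gamma\}$ the first summand is bounded in $\widetilde{\cB}_{\beta+\delta-\vartheta}$ by $\norm{F}_{C^2}[R^y]_{\vartheta,\cB_{\beta-\vartheta}}\abs{t-s}^\vartheta$, while the second, again by the mean-value identity, by $\norm{F}_{C^2}\abs{y_{t,s}}^2_{\cB_{\beta-\gamma}}\lesssim\norm{F}_{C^2}[y]^2_{\gamma,\cB_{\beta-\gamma}}\abs{t-s}^{2\gamma}$; since $T\leq1$ one has $\abs{t-s}^{2\gamma}\leq\abs{t-s}^\vartheta$, so this lands correctly in both the $\gamma$- and the $2\gamma$-level remainder. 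Collecting the pieces and using~\eqref{est:hoelder:y} to replace $[y]_{\gamma,\cB_{\beta-\gamma}}$ by a multiple of $(1+\rho_\gamma(\X))\norm{y,y^\prime}_{X,2\gamma,\cB_\beta}$ yields~\eqref{est:SmoothComposition}.

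For part~ii), expand the three differences $z-\widetilde z=F(y)-F(\widetilde y)$, $z^\prime-\widetilde z^\prime=DF(y)\circ y^\prime-DF(\widetilde y)\circ\widetilde y^\prime$ and $R^z-R^{\widetilde z}$, and telescope every product so that exactly one factor is an increment of $(y-\widetilde y,y^\prime-\widetilde y^\prime)$ --- controlled by $\norm{y-\widetilde y,y^\prime-\widetilde y^\prime}_{X,2\gamma,\cB_\beta}$ --- while the remaining factors are ordinary H\"older-type quantities of $(y,y^\prime)$ or $(\widetilde y,\widetilde y^\prime)$, controlled by $(1+\rho_\gamma(\X))$ times $\norm{y,y^\prime}_{X,2\gamma,\cB_\beta}+\norm{\widetilde y,\widetilde y^\prime}_{X,2\gamma,\cB_\beta}$. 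For instance $DF(y)\circ y^\prime-DF(\widetilde y)\circ\widetilde y^\prime=DF(y)\circ(y^\prime-\widetilde y^\prime)+\bigl(DF(y)-DF(\widetilde y)\bigr)\circ\widetilde y^\prime$ with $DF(y)-DF(\widetilde y)=\int_0^1 D^2F\bigl(\widetilde y+\tau(y-\widetilde y)\bigr)(y-\widetilde y)~\txtd\tau$; in the remainder difference, differences of the $D^2F$-terms generate increments of $D^2F$ along the segment joining $y$ and $\widetilde y$, which is precisely where the bounded third derivative $D^3F$ is used. The same bookkeeping of scales as in part~i), together with~\eqref{est:hoelder:y}, then delivers~\eqref{est:SmoothComposition2}.

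The combinatorial collecting of terms is routine; the genuine difficulty, and the only feature distinguishing this from the classical composition lemma in~\cite{GHN,FrizHairer}, is the careful tracking of the two scales: each occurrence of $DF$, $D^2F$ and $D^3F$ must be read as a bounded (multi-)linear map between a \emph{specific} pair $\cB_{\beta-\vartheta}\to\widetilde{\cB}_{\beta-\vartheta+\delta}$, $\vartheta\in\{0,\gamma,2\gamma\}$, so that its image lies in exactly the space $\widetilde{\cB}_{\beta+\delta-\vartheta}$ required by Definition~\ref{def:crp} and carries the correct power of $\abs{t-s}$. What makes this possible is that an increment $y_{t,s}$ is only $\gamma$-H\"older with values in $\cB_{\beta-\gamma}$, so any expression quadratic in $y_{t,s}$ automatically produces the factor $\abs{t-s}^{2\gamma}$, which for $T\leq1$ dominates $\abs{t-s}^\gamma$ as well and can hence be absorbed into either level of the remainder.
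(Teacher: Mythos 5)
Your proposal is correct and follows essentially the same route as the paper: both treat the lemma as the composition lemma of \cite[Lemma 4.7]{GHN}, adapted only by tracking which Banach space $\cB_{\beta-\vartheta}\to\widetilde\cB_{\beta-\vartheta+\delta}$ each occurrence of $DF$, $D^2F$, $D^3F$ acts between, and both estimate $z'_{t,s}$ by the same two-term telescoping (the paper brackets as $DF(y_s)\circ y'_{t,s}+(DF(y_t)-DF(y_s))\circ y'_t$, you as $DF(y_t)\circ y'_{t,s}+(DF(y_t)-DF(y_s))\circ y'_s$; the resulting bounds are identical). You in fact supply more explicit detail than the paper (e.g.\ the Taylor-remainder formula for $R^z_{t,s}$ and the $\abs{t-s}^{2\gamma}\le\abs{t-s}^{\gamma}$ absorption argument), which the paper leaves to the reader.
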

        \begin{proof}
            The proof is similar to \cite[Lemma 4.7]{GHN}. We point out the main differences that occur in our case. We can view for $t\in [0,T]$ the derivative $D^kF(y_t)$ as an element of $\cL(\cB^{\otimes k}_{\beta-\vartheta},\widetilde{\cB}_{\beta-\vartheta+\delta})$ for $k=1,2,3$ and $\vartheta\in \{\gamma,2\gamma\}$. Since $(\cB_\beta)_{\beta\in\R}$ and $(\widetilde{\cB}_\beta)_{\beta\in\R}$ are both Banach scales (recall Definition~\ref{raum}), all the necessary estimates remain valid. For instance, one can estimate the Gubinelli derivative $z'=DF(y)\circ y'$ as
            \begin{align*}
                \abs{z^\prime_{t,s}}_{\widetilde{\cB}_{\beta-2\gamma+\delta}}&\leq \abs{DF(y_s)\circ y_{t,s}^\prime}_{\widetilde{\cB}_{\beta-2\gamma+\delta}} + \abs{(DF(y_t)-DF(y_s))\circ y_t^\prime}_{\widetilde{\cB}_{\beta-2\gamma+\delta}}\\ 
                &\leq \norm{DF(y_s)}_{\cL(\cB_{\beta-2\gamma},\widetilde{\cB}_{\beta-2\gamma+\delta})}\abs{y_{t,s}^\prime}_{\cB_{\beta-2\gamma}}\\
                &+\norm{DF(y_t)-DF(y_s)}_{\cL(\cB_{\beta-2\gamma},\widetilde{\cB}_{\beta-2\gamma+\delta})}\abs{y_s^\prime}_{\cB_{\beta-2\gamma}}\\ 
                &\leq \norm{F}_{C^1} \left[y^\prime\right]_{\gamma,\cB_{\beta-2\gamma}}\abs{t-s}^\gamma+ \norm{D^2F}_{C} \abs{y_{t,s}}_{\cB_{\beta-2\gamma}}\norm{y^\prime}_{\infty,\cB_{\beta-\gamma}} \\ 
                &\lesssim \norm{F}_{C^1} \norm{y,y^\prime}_{X,2\gamma,\cB_\beta} \abs{t-s}^\gamma + \norm{F}_{C^2} \norm{y,y^\prime}_{X,2\gamma,\cB_\beta} \left[y \right]_{\gamma,\cB_{\beta-2\gamma}} \abs{t-s}^\gamma\\ 
                &\leq \norm{F}_{C^1} \norm{y,y^\prime}_{X,2\gamma,\cB_\beta} \abs{t-s}^\gamma \\ 
                &+ \norm{F}_{C^2} \norm{y,y^\prime}_{X,2\gamma,\cB_\beta} (1+\rho_\gamma(\X))\norm{y,y^\prime}_{X,2\gamma,\cB_\beta} \abs{t-s}^\gamma,
            \end{align*}
            where we use \eqref{est:hoelder:y}. The estimates of the remainder follow by analogue computations. 
            \qed\\
        \end{proof}
         
        Returning to~\eqref{eq:intro} and regarding that according to~\eqref{newInterchangeIntegral} and Lemma~\ref{sigma} it holds  $$A\int_0^{t} S_{t-r} NF(y_r)~\txtd\X_r=\int_0^{t} S_{t-r}A_{-\sigma} NF(y_r)~\txtd\X_r,$$
        and we can now rewrite \eqref{eq:intro} as a semilinear evolution equation without boundary noise 
        \begin{align}\label{eq:introNew}
            \begin{cases}
                  \txtd y = \left(Ay + f(y)\right)~\txtd t + A_{-\sigma}NF(y)~\txtd \X_t,\\
                  y(0)=y_0\in \cB_{-\eta}.
            \end{cases}
        \end{align}
        We recall that $A_{-\sigma}$ is the extrapolation operator introduced in Section~\ref{preliminaries}, $\sigma=\eta+\gamma=1-\varepsilon+\gamma$ with $\varepsilon=\frac{1}{2}+\frac{1}{2p}-\delta$ for a small $\delta>0$.

        \begin{remark}
            The idea to rewrite~\eqref{eq:intro} as a semilinear problem without boundary noise as in~\eqref{eq:introNew} using an extrapolation operator was also applied in~\cite{VeraarSchnaubelt}. There the extrapolated operator $A_{-1}$ was used. We note that the index of the extrapolation operator needed there is $\frac{\widetilde{\alpha}}{2}-1$ where $\widetilde{\alpha} \in \left(1,1+\frac{1}{p}\right)$. Therefore, our result is consistent with the one in \cite{VeraarSchnaubelt} for Brownian noise, since in both cases the index satisfies $-\eta=\frac{\widetilde{\alpha}}{2}-1\in \left(-\frac{1}{2},-\frac{1}{2}+\frac{1}{2p}\right)$ due to the restriction $\varepsilon>1-\gamma$ and $\gamma\in \left(\frac{1}{3},\frac{1}{2}\right]$. We work here with the extrapolation operator $A_{-\eta-\gamma}$, as pointed out in Remark~\ref{two:i} because this seems to fit well in the rough path framework. 
        \end{remark}
       
        We give now the main assumptions on the nonlinear drift and diffusion coefficients of \eqref{eq:intro}, that will guarantee the local-~as well as the global-in-time existence of solutions based on the results in~\cite{GHN,HN21}.
        \begin{assumptions}\label{ass2}
            \begin{itemize}
                \item[1)] There exists a $\delta_1\in[0,1)$ such that the drift term $f:\cB_{-\eta}\to \cB_{-\eta-\delta_1}$ is Lipschitz continuous.
                \item[2)] There exists a $\delta_2>\eta+1+\frac{1}{p}$ such that for any $\vartheta \in \{0,\gamma, 2\gamma \}$ the diffusion term $F:\cB_{-\eta-\vartheta}\to \widetilde{\cB}_{-\eta-\vartheta+\delta_2}$ is three times continuously  Fr\'{e}chet differentiable with bounded derivatives.
                \item[3)] Let 1) and 2) be satisfied. Assume additionally that $f:\cB_{-\eta}\to \cB_{-\eta-\delta_1}$ for $\delta_1\in[2\gamma,1)$ satisfies a linear growth condition and that the derivative of 
                \begin{align*}
                    DF(\cdot) \circ G(\cdot):\cB_{-\eta-\gamma}\to \widetilde{\cB}_{-\eta-\gamma+\delta_2}
                \end{align*}
                is bounded, where $G:=A_{-\sigma} N F$. 
            \end{itemize}
        \end{assumptions}
        \begin{remark}
            \begin{itemize}
                \item[i)] The assumption on the diffusion coefficient ensures that $-\eta+\delta_2>1+\frac{1}{p}$ such that $NF:\cB_{-\eta}\to \cB_{\varepsilon}=\cB_{1-\eta}\hookrightarrow \cB_{1-\sigma}$ is bounded due to the definition of the Neumann operator. Therefore $G:\cB_{-\eta}\to\cB_{-\eta}$ is well-defined. Since $NF:\cB_{-\eta-\gamma}\to \cB_{\varepsilon-\gamma}=\cB_{1-\sigma}$ is also valid, we conclude  that $G:\cB_{-\eta-\gamma}\to\cB_{-\eta-\gamma}$ is also well-defined. However, it is not true that $G:\cB_{-\eta-2\gamma}\to \cB_{-\eta-2\gamma}$, since the extrapolated operator $A_{-\sigma}$ is no longer defined on this space. Nevertheless, we  can deal with this technical issue, recall Lemma~\ref{lem:GubDeriv} for a similar situation. 
             
                \item[i)] Due to the rough path techniques, the assumptions on the diffusion coefficient $F$ are more restrictive than in \cite{VeraarSchnaubelt}, where $F$ maps into $\cB_{-1}$, is Lipschitz continuous and satisfies a linear growth condition. Moreover, due to the presence of the Neumann operator, $F$ is supposed to improve the spatial regularity, in order to define the rough convolution as in~\eqref{Gintegral}. Such issues are common for rough convolutions and have been also encountered in~\cite{HN19,Maria}. 
                
            \end{itemize}
        \end{remark}

        \begin{corollary}\label{new:crp}
            Let Assumption \ref{ass2} 2) be fulfilled. Then for every $(y,y^\prime)\in \cD^{2\gamma}_{X,-\eta}$ we have $(A_{-\sigma}NF(y),A_{-\sigma}N(DF(y)\circ y^\prime))\in \cD^{2\gamma}_{X,-\eta}$. 
        \end{corollary}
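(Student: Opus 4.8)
The plan is to deduce Corollary~\ref{new:crp} by chaining two results already established above: the composition lemma, Lemma~\ref{lem:composition}, and the ``Neumann plus extrapolation'' lemma, Lemma~\ref{lem:GubDeriv}. First I would push the controlled rough path $(y,y')$ through the nonlinearity $F$ to obtain a controlled rough path in the Besov scale at a high regularity level, and then I would apply the operator $A_{-\sigma}N$ to bring it back into the Bessel potential scale at level $-\eta$.

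Concretely, Assumption~\ref{ass2}~2) tells us that $F\colon\cB_{-\eta-\vartheta}\to\widetilde{\cB}_{-\eta-\vartheta+\delta_2}$ is three times, hence in particular two times, continuously Fr\'{e}chet differentiable with bounded derivatives for every $\vartheta\in\{0,\gamma,2\gamma\}$. This is exactly the hypothesis of Lemma~\ref{lem:composition} with $\beta:=-\eta$ and $\delta:=\delta_2$, so its part~i), applied to $(y,y')\in\cD^{2\gamma}_{X,-\eta}$, gives
\[
    \bigl(F(y),\,DF(y)\circ y'\bigr)\in\widetilde{\cD}^{2\gamma}_{X,-\eta+\delta_2},
\]
together with the quantitative bound~\eqref{est:SmoothComposition}. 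The requirement $\delta_2>\eta+1+\frac{1}{p}$ now yields $-\eta+\delta_2>1+\frac{1}{p}$, which is precisely the order at which the Neumann operator delivers a strong solution; repeating the embedding argument from the discussion preceding Corollary~\ref{cor:RoughConvInDomain} with $\alpha$ replaced by $-\eta+\delta_2$, one gets $N\in\bigcap_{i=0}^{2}\cL(\widetilde{\cB}_{-\eta+\delta_2-i\gamma},\cB_{\varepsilon-i\gamma})$ with $\varepsilon=1-\eta$.

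The second step is to invoke Lemma~\ref{lem:GubDeriv} with $\bigl(F(y),DF(y)\circ y'\bigr)$ in the role of $(y,y')$ there. Since that controlled rough path lies in $\widetilde{\cD}^{2\gamma}_{X,-\eta+\delta_2}$ and the Neumann operator has the mapping property just recorded, the proof of Lemma~\ref{lem:GubDeriv} goes through verbatim (alternatively, since $\alpha$ was only constrained to lie in $(1+\frac1p,2)$ and may be chosen $\le-\eta+\delta_2$, one may first use the obvious embedding $\widetilde{\cD}^{2\gamma}_{X,-\eta+\delta_2}\hookrightarrow\widetilde{\cD}^{2\gamma}_{X,\alpha}$ and then apply Lemma~\ref{lem:GubDeriv} as stated). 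Either way this produces
\[
    \bigl(A_{-\sigma}NF(y),\,A_{-\sigma}N(DF(y)\circ y')\bigr)\in\cD^{2\gamma}_{X,-\eta},
\]
which is the claim.

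I do not expect a genuine obstacle here: no new rough-path estimate has to be performed, and the only delicate point is the bookkeeping of regularity indices. In particular one should check that Assumption~\ref{ass2}~2) was quantified by exactly $\delta_2>\eta+1+\frac1p$ so that $F$ lifts the solution into a Besov space of order strictly above $1+\frac1p$ -- enough for $N$ to reach $\cB_{1-\eta}$ and for the extrapolation $A_{-\sigma}$ to keep the path component at level $-\eta$ while shifting its Gubinelli derivative down by $\gamma$, which is precisely what the choice $\sigma=\eta+\gamma$ in Lemma~\ref{lem:GubDeriv} encodes.
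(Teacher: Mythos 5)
Your argument is essentially identical to the paper's proof: apply Lemma~\ref{lem:composition} with $\beta=-\eta$, $\delta=\delta_2$ to get $(F(y),DF(y)\circ y')\in\widetilde{\cD}^{2\gamma}_{X,-\eta+\delta_2}$, observe that $-\eta+\delta_2>1+\frac{1}{p}$, and then feed this into Lemma~\ref{lem:GubDeriv}. The extra remarks you add about the embedding $\widetilde{\cD}^{2\gamma}_{X,-\eta+\delta_2}\hookrightarrow\widetilde{\cD}^{2\gamma}_{X,\alpha}$ and the exact mapping range of $N$ are correct and simply spell out bookkeeping the paper leaves implicit.
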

        \begin{proof}
            Let be $(y,y^\prime)\in \cD^{2\gamma}_{X,-\eta}$. Due to Lemma \ref{lem:composition} we know that $(F(y),DF(y)\circ y^\prime)\in \widetilde{\cD}^{2\gamma}_{X,-\eta+\delta_2}$. Since $-\eta+\delta_2>1+\frac{1}{p}$, the claim follows applying Lemma \ref{lem:GubDeriv} to the controlled rough path $(F(y), DF(y)\circ y')$.
            \qed
        \end{proof}
        \begin{theorem}\em{(Existence of a local-in-time solution for~\eqref{eq:introNew})}\label{local}
            Assume that $F$ and $f$ satisfy Assumption \ref{ass2} 1) and 2). Then there exists for every initial condition $y_0\in \cB_{-\eta}$ a time $T^*\leq T$ and a unique solution $(y,A_{-\sigma}N F(y))\in \cD^{2\gamma}_{X,-\eta}\left([0,T^*)\right)$ to \eqref{eq:intro} such that 
            \begin{align}\label{eq:mildSolution}
                y_t=S_t y_0 + \int_0^t S_{t-r} f(y_r)~\txtd r+\int_0^t S_{t-r} A_{-\sigma}NF(y_r)~\txtd \X_r, \text{ for all } t<T^*.
            \end{align}
        \end{theorem}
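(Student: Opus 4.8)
The plan is to set this up as a fixed-point problem in the Banach space of controlled rough paths $\cD^{2\gamma}_{X,-\eta}([0,T^*])$ and run a contraction argument, following the scheme of~\cite{GHN,HN21}. First I would define the solution map $\cM$ on an appropriate closed ball: given $(y,y') \in \cD^{2\gamma}_{X,-\eta}$, let
\begin{align*}
    \cM(y,y')_t := \Big( S_t y_0 + \int_0^t S_{t-r} f(y_r)~\txtd r + \int_0^t S_{t-r} A_{-\sigma} N F(y_r)~\txtd \X_r,\ A_{-\sigma} N F(y_t) \Big).
\end{align*}
The three terms must be shown to produce a well-defined element of $\cD^{2\gamma}_{X,-\eta}([0,T^*])$. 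The semigroup term $(S_\cdot y_0, 0)$ is a controlled rough path by the standard smoothing estimates~\eqref{hg:1}--\eqref{hg:2} since $y_0 \in \cB_{-\eta}$; the drift term $\int_0^\cdot S_{\cdot-r} f(y_r)~\txtd r$ with zero Gubinelli derivative is handled by classical parabolic estimates using Assumption~\ref{ass2}~1) and the fact that $\delta_1 < 1$. For the rough integral term, I would first invoke Corollary~\ref{new:crp}: under Assumption~\ref{ass2}~2), $(A_{-\sigma}NF(y), A_{-\sigma}N(DF(y)\circ y')) \in \cD^{2\gamma}_{X,-\eta}$, so Corollary~\ref{cor:RoughConv} (equivalently Corollary~\ref{cor:higherreg}, applied with the scale $\cB_{-\eta}$) shows that $\big(\int_0^\cdot S_{\cdot - r} A_{-\sigma}NF(y_r)~\txtd\X_r,\ A_{-\sigma}NF(y_\cdot)\big)$ is again a controlled rough path of the same regularity $-\eta$, in fact gaining a factor $T^{\gamma-\sigma_0}$ in front of its norm for any small $\sigma_0 > 0$. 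This is exactly the self-consistency that makes the Gubinelli derivative in~\eqref{eq:mildSolution} be $A_{-\sigma}NF(y)$ as claimed in the statement.

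Second, I would combine these estimates with the composition estimate~\eqref{est:SmoothComposition} from Lemma~\ref{lem:composition} (applied with $\beta = -\eta$, $\delta = \delta_2$, mapping into the boundary scale $\widetilde{\cB}$) and the boundedness of $A_{-\sigma}N$ between the relevant levels of the two scales to obtain a bound of the form
\begin{align*}
    \norm{\cM(y,y')}_{X,2\gamma,-\eta} \leq C\big(|y_0|_{-\eta}\big) + C\, T^{\kappa}(1+\rho_\gamma(\X))^2\, \norm{F}_{C^2}\, \norm{y,y'}_{X,2\gamma,-\eta}\big(1 + \norm{y,y'}_{X,2\gamma,-\eta}\big)^2
\end{align*}
for some exponent $\kappa > 0$ (coming from $T^{\gamma-\sigma_0}$ in Corollary~\ref{cor:higherreg} and a positive power of $T$ in the drift estimate), plus the analogous difference estimate using~\eqref{est:SmoothComposition2} from Lemma~\ref{lem:composition}~ii) together with Assumption~\ref{ass2}~2) (three times Fréchet differentiable with bounded derivatives). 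A subtlety to address carefully is the base point of the controlled rough path: the norm~\eqref{g:norm} is a seminorm-type quantity, so to make the ball invariant one fixes the initial value component and works on the affine subspace of controlled rough paths with prescribed $y_0$ and $y'_0 = A_{-\sigma}NF(y_0)$; one then verifies that $\cM$ preserves this subspace (immediate from the formula) and estimates only the seminorm part, which carries the $T^\kappa$ smallness.

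Third, choosing $T^* \leq T$ small enough — depending on $|y_0|_{-\eta}$, $\rho_\gamma(\X)$, $\norm{F}_{C^3}$ and the Lipschitz constant of $f$ — the map $\cM$ becomes a contraction on a suitable closed ball $\overline{B}_R \subset \cD^{2\gamma}_{X,-\eta}([0,T^*])$ for appropriate $R$, and Banach's fixed point theorem yields a unique fixed point $(y, A_{-\sigma}NF(y))$, which by construction satisfies~\eqref{eq:mildSolution} and hence is the unique local mild solution of~\eqref{eq:intro}. Uniqueness in the whole space (not just the ball) follows by the usual argument of comparing any two solutions on a possibly shorter interval where both lie in a common ball and then propagating. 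I expect the main obstacle to be bookkeeping the interaction between the two scales $(\cB_\alpha)$ and $(\widetilde{\cB}_\alpha)$ through the composite operator $A_{-\sigma}N$ at the three regularity levels $-\eta, -\eta-\gamma, -\eta-2\gamma$ simultaneously — in particular making sure, as flagged in the remark after Assumption~\ref{ass2}, that $A_{-\sigma}$ is applied only to elements living in $\cB_{1-\sigma}$ (never in $\cB_{\varepsilon - 2\gamma}$), which is why Lemma~\ref{lem:GubDeriv} rather than Lemma~\ref{lem:CRP} must be used for the Gubinelli-derivative component — and in tracking that every constant depends polynomially on $\rho_\gamma(\X)$ so that the continuity-in-noise statements of later sections go through.
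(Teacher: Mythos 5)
Your proposal is correct and follows essentially the same route as the paper's proof: a Banach fixed-point argument for the map $\Phi(u,u')=\bigl(S_\cdot y_0+\int_0^\cdot S_{\cdot-r}f(u_r)\,\txtd r+\int_0^\cdot S_{\cdot-r}A_{-\sigma}NF(u_r)\,\txtd\X_r,\,A_{-\sigma}NF(u_\cdot)\bigr)$ on a closed ball of controlled rough paths with prescribed initial data $(y_0,A_{-\sigma}NF(y_0))$, using Corollary~\ref{new:crp}, Lemma~\ref{lem:composition} and Corollary~\ref{cor:higherreg} to obtain invariance and contraction for small $\tau$. Your handling of the base-point/seminorm issue and of where $A_{-\sigma}$ may legitimately be applied matches the paper's construction of $B_\tau(y_0)$ centered at $(\xi,\xi')$, so nothing further is needed.
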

        \begin{proof} 
        To prove this, we use the Banach fixed point theorem. Therefore, we show that the map 
        \begin{align*}
            \Phi(u,u^\prime):=\left(S_\cdot y_0 + \int_0^\cdot S_{\cdot-r} f(u_r)~\txtd r+\int_0^\cdot S_{\cdot-r} A_{-\sigma}NF(u_r)~\txtd \X_r,A_{-\sigma}NF(u_\cdot)\right)
        \end{align*}
        with $(u,u^\prime)\in \cD^{2\gamma}_{X,-\eta}$ admits for some $\tau\leq T$ a fixed point in the closed ball
        \begin{align*}
            B_\tau(y_0):=\{(u,u^\prime)\in\cD^{2\gamma}_{X,-\eta}\left([0,\tau]\right)~:~ (u_0,u_0^\prime)=(y_0,A_{-\sigma}NF(y_0)) \text{ and } \norm{u-\xi,u^\prime-\xi^\prime}_{X,2\gamma,-\eta}\leq 1 \}
        \end{align*}
        centered around $(\xi,\xi^\prime)$ with 
        $$\xi_t:=S_{t}y_0+\int_0^t S_{t-r}A_{-\sigma}NF(y_0)~\txtd \X_r \qquad \text{and} \qquad \xi_t^\prime:= A_{-\sigma}NF(y_0).$$
        One can directly see that $ (\xi,\xi^\prime)\in\cD^{2\gamma}_{X,-\eta}\left([0,T]\right)$.
    ~The strategy is to prove first that $\Phi$ leaves $B_\tau(y_0)$ invariant and that $\Phi$ is a contraction on $B_\tau(y_0)$ for a sufficiently small $\tau$. Then Banach's fixed point theorem ensures the existence of a unique fixed point $(y,y^\prime)\in\cD^{2\gamma}_{X,-\eta}\left([0,\tau]\right)$, where $y$ satisfies \eqref{eq:mildSolution} and $y^\prime=A_{-\sigma}NF(y_t)$. Both invariance and contraction property of $\Phi$ can be obtained using that $(F(y),DF(y)\circ y^\prime)\in \widetilde{\cD}^{2\gamma}_{X,-\eta+\delta_2}$ by Lemma~\ref{lem:composition} and therefore $(A_{-\sigma}NF(y), A_{-\sigma} N(DF(y)\circ y'))\in \cD^{2\gamma}_{X,-\eta} $ due to Corollary~\ref{new:crp}. Due to these results, we are able to use the estimates for the rough convolution in Lemma~\ref{lem:composition} and Corollary~\ref{cor:higherreg} similar to the semilinear case \cite[Theorem 5.1]{GHN} or~\cite[Theorem 4.1]{GHairer}. 
            \qed
    \end{proof}\\
            
        
        In order to prove global-in-time existence of the solution, we use Assumption~\ref{ass2} 3) to avoid the quadratic terms appearing by the composition of a controlled rough path with a smooth function, as stated in Lemma~\ref{lem:composition}. To this aim, we derive the following key result, recalling that $G=A_{-\sigma} NF$. 
        \begin{lemma}\label{no:q}
            Let Assumption \ref{ass2} be satisfied and further assume that $(y,G(y))\in \cD^{2\gamma}_{X,-\eta}$. Then $(G(y), DG(y)\circ G(y))\in\cD^{2\gamma}_{X,-\eta}$ and the following bound is valid
            \begin{align*}
                \norm{G(y), DG(y)\circ G(y)}_{X,2\gamma,-\eta}\lesssim 1+\norm{y,G(y)}_{X,2\gamma,-\eta}.
            \end{align*}
        \end{lemma}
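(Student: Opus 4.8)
The plan is to deduce the claim by combining the composition lemma (Lemma~\ref{lem:composition}) with the key linear-growth and bounded-derivative hypotheses contained in Assumption~\ref{ass2}~3), exactly so as to cancel the quadratic terms that would otherwise appear. First I would observe that $G = A_{-\sigma}NF$ maps $\cB_{-\eta-\vartheta}$ into $\cB_{-\eta-\vartheta}$ for $\vartheta\in\{0,\gamma\}$ but \emph{not} for $\vartheta = 2\gamma$, as pointed out in the remark after Assumption~\ref{ass2}; this is the by-now familiar technical nuisance that $A_{-\sigma}$ is undefined on $\cB_{\varepsilon-2\gamma}$. So the composition result cannot be quoted verbatim with $\delta = 0$; instead I would run the argument with the intermediate space $\widetilde{\cB}_{-\eta+\delta_2}$, using that $NF:\cB_{-\eta-\vartheta}\to\cB_{1-\sigma} = \cB_{\varepsilon-\gamma}\hookrightarrow\cB_{(\varepsilon-\vartheta)\vee(\varepsilon-\gamma)}$ for each relevant $\vartheta$, together with $A_{-\sigma}$ acting between the appropriate extrapolated spaces, precisely in the manner of Lemma~\ref{lem:GubDeriv} and Corollary~\ref{new:crp}. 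Concretely, $G(y)\in\cB_{-\eta}$ with Gubinelli derivative $DG(y)\circ G(y)$, and the required Hölder bounds for $DG(y)\circ G(y)$ in $\cB_{-\eta-2\gamma}$ and for the remainder $R^{G(y)}$ in $\cB_{-\eta-\gamma}\cap\cB_{-\eta-2\gamma}$ follow by the same telescoping estimates as in Lemma~\ref{lem:composition}, shifting indices down by $\gamma$ wherever $A_{-\sigma}$ has to be lifted.

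The crucial point — and the reason the bound is \emph{linear} rather than quadratic in $\norm{y,G(y)}_{X,2\gamma,-\eta}$ — is the structure of $DG(y)\circ G(y)$. When one expands $\abs{(DG(y))_{t,s}\circ G(y_t)}_{-\eta-2\gamma}$ via the mean value theorem one picks up a term $\norm{D^2G}\,\abs{y_{t,s}}\,\abs{G(y_t)}$; naively $\abs{G(y_t)}\lesssim\norm{G}_{C^1}\norm{y,G(y)}$, which would be quadratic. The hypothesis in Assumption~\ref{ass2}~3) that $D\big(DF(\cdot)\circ G(\cdot)\big):\cB_{-\eta-\gamma}\to\widetilde{\cB}_{-\eta-\gamma+\delta_2}$ is bounded is exactly what is needed: it says that the composite map $y\mapsto DF(y)\circ G(y)$ — and hence, after applying $A_{-\sigma}N$, the map $y\mapsto DG(y)\circ G(y)$ — is globally Lipschitz (and bounded) as a function of $y$, so its increments are controlled by $\abs{y_{t,s}}$ alone and its sup-norm is controlled by $1+\norm{y}_{\infty,-\eta}$. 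Thus I would write $DG(y)\circ G(y) = A_{-\sigma}N\big(DF(y)\circ G(y)\big)$, treat $H:=DF(\cdot)\circ G(\cdot)$ as a single Lipschitz, bounded, differentiable map with values in $\widetilde{\cB}_{-\eta-\gamma+\delta_2}$, and apply Lemma~\ref{lem:GubDeriv}-type bounds to $A_{-\sigma}N$ composed with it, producing only the linear dependence claimed.

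In more detail, the three pieces of the controlled-rough-path norm of $(G(y),DG(y)\circ G(y))$ are handled as follows: the sup-norms $\norm{G(y)}_{\infty,-\eta}$ and $\norm{DG(y)\circ G(y)}_{\infty,-\eta-\gamma}$ are bounded by $1 + \norm{y,G(y)}_{X,2\gamma,-\eta}$ using boundedness of $F$ (hence of $G$) and boundedness of $H = DF(\cdot)\circ G(\cdot)$ together with the embedding $\cB_\varepsilon\hookrightarrow\cB_{1-\sigma}$ and the continuity of $A_{-\sigma}N$; the Hölder seminorm $[DG(y)\circ G(y)]_{\gamma,-\eta-2\gamma}$ reduces, after lifting $A_{-\sigma}$ to $A_{-\eta-2\gamma}$ via Remark~\ref{rem:Extrapolation}, to $[H(y)]_{\gamma,\varepsilon-2\gamma}\lesssim \mathrm{Lip}(H)\,[y]_{\gamma,\cB_{-\eta-2\gamma}}$, which by Remark~\ref{rem:CRPy} is $\lesssim \norm{y,G(y)}_{X,2\gamma,-\eta}(1+\rho_\gamma(\X))$ — still linear; and the remainder $R^{G(y)}_{t,s} = A_{-\sigma}N R^{F(y)}_{t,s}$, with $R^{F(y)}$ the remainder of the composed pair $(F(y),DF(y)\circ y')$, is estimated in $\cB_{-\eta-\vartheta}$ by lifting $A_{-\sigma}$ and invoking the remainder bound from Lemma~\ref{lem:composition} applied to $F$ with $\delta = \delta_2$, again giving at most linear growth once the $C^2$-norm of $F$ and the Lipschitz bound on $H$ are used. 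I expect the main obstacle to be bookkeeping the index shifts correctly: one must consistently realize $A_{-\sigma}$ as $A_{-\eta-2\gamma}$ (resp.\ $A_{-\eta-\gamma}$) on the relevant subspaces, verify at each step that the argument of $A_{-\sigma}$ actually lies in $\cB_{1-\sigma}$ rather than merely $\cB_{1-\eta-2\gamma}$, and make sure the $y'$-dependence hidden inside $DF(y)\circ y'$ (which lives one scale lower) is absorbed by the Hölder regularity of $y$ rather than producing an uncontrolled factor — this is precisely where Assumption~\ref{ass2}~3) is indispensable, and getting the logic of which boundedness hypothesis kills which quadratic term is the delicate part of the write-up.
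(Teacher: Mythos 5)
Your proposal is essentially the paper's own argument: estimate each of the five pieces of $\norm{G(y),DG(y)\circ G(y)}_{X,2\gamma,-\eta}$ directly, lift $A_{-\sigma}$ to the appropriate $A_{-\eta-\vartheta}$ via Remark~\ref{rem:Extrapolation}, and use precisely the boundedness of $D\bigl(DF(\cdot)\circ G(\cdot)\bigr)$ from Assumption~\ref{ass2}~3) to replace the quadratic factor $\abs{G(y_t)}$ by a uniform constant, so that the increments of $H:=DF(\cdot)\circ G(\cdot)$ are controlled by $\abs{y_{t,s}}$ alone; the identity $R^{G(y)}=A_{-\sigma}N R^{F(y)}$ you write down is exactly the telescoping $R^{G(y)}_{t,s}=\int_0^1\bigl(DG(y_s+ry_{t,s})-DG(y_s)\bigr)\,\txtd r\,G(y_s)X_{t,s}+\int_0^1 DG(y_s+ry_{t,s})\,\txtd r\,R^y_{t,s}$ that the paper uses. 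The one thing you should tighten in the write-up is the phrase ``invoking the remainder bound from Lemma~\ref{lem:composition}'': that lemma's bound is quadratic, so you cannot cite it as a black box for the $C^{2\gamma}(\cB_{-\eta-2\gamma})$ piece; instead you must reproduce its internal decomposition and, at the step where $\norm{D^2F}\,\abs{y_{t,s}}\,\abs{G(y_s)}$ would appear, substitute the Lipschitz estimate $\abs{(DF(x_1)-DF(x_2))G(x_1)}_{\widetilde\cB_{-\eta-\gamma}}\lesssim\abs{x_1-x_2}_{\cB_{-\eta-\gamma}}$ that follows from boundedness of $DH$ together with boundedness of $DF$ and Lipschitz continuity of $G$ — which is what you allude to but should spell out.
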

        \begin{proof}
            By definition we have
            \begin{align*}
                 \norm{G(y), DG(y)\circ G(y)}_{X,2\gamma,-\eta}&= \norm{G(y)}_{\infty, -\eta}+ \norm{DG(y)\circ G(y)}_{\infty, -\eta-\gamma}+ \left[DG(y)\circ G(y)\right]_{\gamma,-\eta-2\gamma}\\
                 &+  \left[R^{G(y)}\right]_{\gamma,-\eta-\gamma}+\left[R^{G(y)}\right]_{2\gamma,-\eta-2\gamma},
            \end{align*}
            where $R^{G(y)}_{t,s}:=G(y_t)-G(y_s)-\left(DG(y_s)\circ G(y_s)\right)X_{t,s}$ is  the remainder. The first two terms can be estimated directly, since it is assumed that $F$ has bounded derivatives. Obviously, this holds also for $G$. Therefore, we have
            \begin{align*}
                \norm{G(y)}_{\infty, -\eta}&\leq \abs{G(y_0)}_{-\eta}+T^\gamma \left[G(y)\right]_{\gamma, -\eta}\lesssim 1+\left[y\right]_{\gamma,-\eta}\lesssim 1+\norm{y,G(y)}_{X,2\gamma, -\eta},\\ 
                \norm{DG(y)\circ G(y)}_{\infty, -\eta-\gamma}&\lesssim \norm{DG(y)}_{\infty, \mathcal{L}(\cB_{-\eta-\gamma})}\norm{G(y)}_{\infty,-\eta-\gamma}\lesssim \norm{y,G(y)}_{X,2\gamma, -\eta}.
            \end{align*}
            For the third term we recall that for $\alpha:=\delta_2-\eta>1+\frac{1}{p}$, the Neumann operator maps $\widetilde{\cB}_{\alpha-\gamma}$ to $\cB_{\varepsilon-\gamma}$. Together with \ref{ass2} 3), Remark \ref{rem:Extrapolation} 
            and \ref{rem:CRPy} we have
            \begin{align*}
                \left[DG(y)\circ G(y)\right]_{\gamma,\cB_{-\eta-2\gamma}}&=\left[A_{-\sigma}N(DF(y)\circ G(y))\right]_{\gamma,\cB_{-\eta-2\gamma}}=\left[A_{-\eta-2\gamma}N(DF(y)\circ G(y))\right]_{\gamma,\cB_{-\eta-2\gamma}}\\
                &\lesssim \left[N(DF(y)\circ G(y))\right]_{\gamma,\cB_{\varepsilon-2\gamma}}\lesssim \left[N(DF(y)\circ G(y))\right]_{\gamma,\cB_{\varepsilon-\gamma}}\\ 
                &\lesssim \left[DF(y)\circ G(y)\right]_{\gamma,\widetilde{\cB}_{\alpha-\gamma}}\lesssim \left[y\right]_{\gamma,\cB_{-\eta-\gamma}} \lesssim \norm{y,G(y)}_{X,2\gamma,\cB_{-\eta}}.
            \end{align*}
            Consequently we only have to estimate the remainder. For $0\leq s< t \leq T$ we write
            \begin{align*}
                R^{G(y)}_{t,s}=\int_0^1 (DG(y_s+ry_{t,s})-DG(y_s))~\txtd r~G(y_s) X_{t,s}+\int_0^1 DG(y_s+ry_{t,s})~\txtd r~ R^y_{t,s},
            \end{align*}
            and use again the boundedness of the derivatives to obtain
            \begin{align*}
                \left[R^{G(y)}\right]_{\gamma,-\eta-\gamma}&\lesssim\norm{DG(y)}_{\infty,\cL(\cB_{-\eta-\gamma})}(\norm{G(y)}_{\infty,-\eta-\gamma}\left[X\right]_\gamma+\left[R^y\right]_{\gamma,-\eta-\gamma})\\ 
                &\lesssim \norm{y,G(y)}_{X,2\gamma,-\eta}.
            \end{align*}
            For the second remainder term, note that due to the boundedness of $D\left[DF(\cdot)\circ G(\cdot)\right]$, we get the Lipschitz type estimate
            \begin{align*}
                \abs{(DF(x_1)-DF(x_2))G(x_1)}_{\widetilde{\cB}_{-\eta-\gamma}}&\leq \abs{DF(x_1)G(x_1)-DF(x_2)G(x_2)}_{\widetilde{\cB}_{-\eta-\gamma}}\\&
                +\abs{DF(x_2)(G(x_1)-G(x_2))}_{\widetilde{\cB}_{-\eta-\gamma}}\lesssim \abs{x_1-x_2}_{\cB_{-\eta-\gamma}},
            \end{align*}
            for $x_1,x_2\in \cB_{-\eta-\gamma}$. With this property, we get similarly to the first remainder term regarding that $N\in \cL(\widetilde{\cB}_{\alpha-2\gamma},\cB_{\varepsilon-2\gamma})$
            \begin{align*}
                 \abs{R^{G(y)}_{t,s}}_{\cB_{-\eta-2\gamma}}&\lesssim\int_0^1 \abs{(DG(y_s+ry_{t,s})-DG(y_s))G(y_s)}_{\cB_{-\eta-2\gamma}}~\txtd r~ \left[X\right]_{\gamma}\abs{t-s}^\gamma\\ &+\int_0^1 \abs{DG(y_s+ry_{t,s})R^y_{t,s}}_{\cB_{-\eta-2\gamma}}~\txtd r\\ 
                 &\lesssim\int_0^1 \abs{A_{-\sigma}N((DF(y_s+ry_{t,s})-DF(y_s))G(y_s))}_{\cB_{-\eta-2\gamma}}~\txtd r \abs{t-s}^{\gamma}\\&+\int_0^1 \abs{A_{-\sigma}N(DF(y_s+ry_{t,s}))R^y_{t,s}}_{\cB_{-\eta-2\gamma}}~\txtd r\\ 
                 &\lesssim\int_0^1 \abs{(DF(y_s+ry_{t,s})-DF(y_s))G(y_s)}_{\widetilde{\cB}_{\alpha-2\gamma}}~\txtd r \abs{t-s}^\gamma \\ &+\int_0^1 \abs{DF(y_s+ry_{t,s})R^y_{t,s}}_{\widetilde{\cB}_{\alpha-2\gamma}}~\txtd r\\
                 &\lesssim\int_0^1 \abs{(DF(y_s+ry_{t,s})-DF(y_s))G(y_s)}_{\widetilde{\cB}_{\alpha-\gamma}}~\txtd r \abs{t-s}^\gamma\\ &+\norm{DF(y)}_{\infty,\mathcal{L}(\cB_{-\eta-2\gamma};\widetilde{\cB}_{\alpha-2\gamma})}\left[R^y_{t,s}\right]_{2\gamma,\cB_{-\eta-2\gamma}} \abs{t-s}^{2\gamma}\\ 
                 &\lesssim \abs{y_t-y_s}_{\cB_{\eta-\gamma}}\abs{t-s}^\gamma+\norm{y,G(y)}_{X,2\gamma,\cB_{-\eta}}\abs{t-s}^{2\gamma}\\ 
                 &\lesssim \left[y\right]_{\gamma,\cB_{-\eta-\gamma}}\abs{t-s}^{2\gamma}+\norm{y,G(y)}_{X,2\gamma,\cB_{-\eta}}\abs{t-s}^{2\gamma}
                 \lesssim \norm{y,G(y)}_{X,2\gamma,\cB_{-\eta}}\abs{t-s}^{2\gamma},
            \end{align*}
            where we used Remark \ref{rem:CRPy} in the last inequality. This means that $R^{G(y)}\in C^{2\gamma}(\cB_{-\eta-2\gamma})$. Collecting all the estimates proves the statement.
            \qed
        \end{proof}\\
        
     Based on Lemma~\ref{no:q} we can derive an estimate for the solution of~\eqref{eq:introNew} which does not contain quadratic terms.
    \begin{corollary}\label{bound:int}
        Let $F$ satisfy the Assumption \ref{ass2} and let $(y,G(y))\in \cD^{2\gamma}_{X,-\eta}$. 
        Then we have
        \begin{align*}
            (z,z^\prime):=\left(\int_0^\cdot S_{\cdot-r} G(y_r)~\txtd \X_r, G(y)\right)\in \cD^{2\gamma}_{X,-\eta}
        \end{align*}
        and the following estimate holds true
        \begin{align*}
            \norm{z,z^\prime}_{X,2\gamma,-\eta}\lesssim \abs{G(y_0)}_{-\eta}+\abs{DG(y_0)\circ G(y_0)}_{-\eta-\gamma}+ T^\gamma \norm{G(y), DG(y)\circ G(y)}_{X,2\gamma,-\eta}.
        \end{align*}
    \end{corollary}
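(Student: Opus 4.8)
The plan is to read off the statement as a direct application of Corollary~\ref{cor:higherreg}, applied with higher-regularity parameter $\sigma:=0$ to the controlled rough path $(G(y),DG(y)\circ G(y))$ on the interpolation--extrapolation scale $(\cB_\alpha)$. The preparatory step has in fact already been carried out in Lemma~\ref{no:q}: under Assumption~\ref{ass2} and the hypothesis $(y,G(y))\in\cD^{2\gamma}_{X,-\eta}$, the pair $(G(y),DG(y)\circ G(y))$ belongs to $\cD^{2\gamma}_{X,-\eta}$, together with the linear bound
\begin{align*}
    \norm{G(y),DG(y)\circ G(y)}_{X,2\gamma,-\eta}\lesssim 1+\norm{y,G(y)}_{X,2\gamma,-\eta}.
\end{align*}
Here one uses the chain rule $DG=A_{-\sigma}N\,DF$, so that $DG(y)\circ G(y)=A_{-\sigma}N(DF(y)\circ G(y))$, which is exactly the Gubinelli derivative produced by Corollary~\ref{new:crp}; thus the two notions of the controlled rough path agree.

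Next I would check that the rough convolution of $G(y)$ falls within the scope of Theorem~\ref{integral} and Corollary~\ref{cor:higherreg}. This only requires that $(\cB_\alpha)$ near the indices $-\eta,-\eta-\gamma,-\eta-2\gamma\in[-2,\infty)$ is a monotone family of interpolation spaces carrying the analytic semigroup $(S_t)$ with the smoothing bounds \eqref{hg:1}--\eqref{hg:2}, which is guaranteed by Theorem~\ref{thm:EpolOperator} and Theorem~\ref{thm:EpolSemigroup}, since $A$ satisfies Assumption~\ref{ass}. Applying Corollary~\ref{cor:higherreg} with $\alpha:=-\eta$ and $\sigma:=0<\gamma$ to $(G(y),DG(y)\circ G(y))\in\cD^{2\gamma}_{X,-\eta}([0,T])$ then yields at once that
\begin{align*}
    (z,z')=\Big(\int_0^\cdot S_{\cdot-r}G(y_r)~\txtd\X_r,\;G(y)\Big)\in\cD^{2\gamma}_{X,-\eta},
\end{align*}
and, by the estimate \eqref{est:Integral2},
\begin{align*}
    \norm{z,z'}_{X,2\gamma,-\eta}\leq\abs{G(y_0)}_{-\eta}+\abs{DG(y_0)\circ G(y_0)}_{-\eta-\gamma}+C\,T^{\gamma}(1+\rho_\gamma(\X))\,\norm{G(y),DG(y)\circ G(y)}_{X,2\gamma,-\eta},
\end{align*}
and absorbing the fixed factor $1+\rho_\gamma(\X)$ into the implicit constant gives the claimed bound.

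The only point worth flagging -- but not a genuine obstacle -- is that $G=A_{-\sigma}NF$ loses spatial regularity, and in particular $G$ does not map $\cB_{-\eta-2\gamma}$ into itself (as noted after Assumption~\ref{ass2}), so one cannot invoke Lemma~\ref{lem:CRP} to obtain $(G(y),DG(y)\circ G(y))\in\cD^{2\gamma}_{X,-\eta}$ directly. This is precisely why the route through Lemma~\ref{no:q} -- which verifies by hand the Gubinelli-derivative bound in $\cB_{-\eta-2\gamma}$ and the remainder bounds $R^{G(y)}\in C^\gamma(\cB_{-\eta-\gamma})\cap C^{2\gamma}(\cB_{-\eta-2\gamma})$ -- is needed. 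Hence the essentially new content is already contained in Lemma~\ref{no:q}, and the proof of the corollary is the two-line combination above. (Chaining the displayed estimate with the bound of Lemma~\ref{no:q} further gives $\norm{z,z'}_{X,2\gamma,-\eta}\lesssim 1+T^\gamma\norm{y,G(y)}_{X,2\gamma,-\eta}$, the form used in the global-in-time bootstrap; for the corollary as stated, keeping the right-hand side in terms of $\norm{G(y),DG(y)\circ G(y)}_{X,2\gamma,-\eta}$, $\abs{G(y_0)}_{-\eta}$ and $\abs{DG(y_0)\circ G(y_0)}_{-\eta-\gamma}$ suffices.)
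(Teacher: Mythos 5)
Your proposal is correct and follows essentially the same route as the paper: the paper's proof is precisely the combination of Lemma~\ref{no:q} (which supplies $(G(y),DG(y)\circ G(y))\in\cD^{2\gamma}_{X,-\eta}$) with Corollary~\ref{cor:higherreg} applied at $\sigma=0$, reading the claimed bound off \eqref{est:Integral2} after absorbing $C(1+\rho_\gamma(\X))$ into the implicit constant. Your additional remark on why Lemma~\ref{lem:CRP} is not applicable here (since $G$ is not defined on $\cB_{-\eta-2\gamma}$) correctly identifies the reason the paper routes through Lemma~\ref{no:q}.
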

     \begin{proof} The claim follows by Corollary \ref{cor:higherreg} combined with the fact that $(G(y), DG(y)\circ G(y))\in \cD^{2\gamma}_{X,-\eta}$ as established in Lemma \ref{no:q}. 
        \qed
    \end{proof}
    \begin{corollary}\label{bound:s}
        Let $f$ and $F$ satisfy Assumptions \ref{ass2} 3) and let $(y,G(y))\in \cD^{2\gamma}_{X,-\eta}$ be the solution of~\eqref{eq:introNew} on a time interval $[0,T]$ with initial data $y_0\in\cB_{-\eta}$. Then we have
        \begin{align*}
        \|y,G(y)\|_{X,2\gamma,-\eta} \lesssim 1 + |y_0|_{-\eta} + T^{1-\delta_1} \|y,G(y)\|_{X,2\gamma,-\eta}.
        \end{align*}
        
    \end{corollary}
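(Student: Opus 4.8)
The plan is to plug the mild-solution formula~\eqref{eq:mildSolution} into the norm and estimate the three resulting contributions separately. Write $v_t:=\int_0^t S_{t-r}f(y_r)\,\txtd r$ and, recalling $G=A_{-\sigma}NF$ and the identity~\eqref{newInterchangeIntegral}, $z_t:=\int_0^t S_{t-r}G(y_r)\,\txtd\X_r$, so that $y=S_\cdot y_0+v+z$. The deterministic terms $S_\cdot y_0$ and $v$ carry the Gubinelli derivative $0$, while $(z,G(y))\in\cD^{2\gamma}_{X,-\eta}$ is exactly the content of Corollary~\ref{bound:int}; hence the controlled-rough-path decomposition reads $(y,G(y))=(S_\cdot y_0,0)+(v,0)+(z,G(y))$. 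Since $\norm{\cdot}_{X,2\gamma,-\eta}$ is subadditive along this linear decomposition, it suffices to bound each summand.

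For $(S_\cdot y_0,0)$ the smoothing estimates~\eqref{hg:1}--\eqref{hg:2} give $\norm{S_\cdot y_0}_{\infty,-\eta}\lesssim|y_0|_{-\eta}$, and writing $S_ty_0-S_sy_0=(S_{t-s}-\Id)S_sy_0$ and using~\eqref{hg:1} yields $[S_\cdot y_0]_{\gamma,-\eta-\gamma}+[S_\cdot y_0]_{2\gamma,-\eta-2\gamma}\lesssim|y_0|_{-\eta}$; as the remainder of this pair is precisely $S_ty_0-S_sy_0$ and the Gubinelli derivative vanishes, $\norm{S_\cdot y_0,0}_{X,2\gamma,-\eta}\lesssim|y_0|_{-\eta}$. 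For $(v,0)$ I use the linear growth of $f$ from Assumption~\ref{ass2}~3) to get $|f(y_r)|_{-\eta-\delta_1}\lesssim 1+|y_r|_{-\eta}\lesssim 1+\norm{y,G(y)}_{X,2\gamma,-\eta}$, and then the classical parabolic convolution bounds: with $S_{t-r}\in\cL(\cB_{-\eta-\delta_1},\cB_{-\eta-\vartheta})$ of operator norm $\lesssim(t-r)^{-(\delta_1-\vartheta)_+}$ for $\vartheta\in\{0,\gamma,2\gamma\}$, and the splitting $v_{t,s}=\int_s^tS_{t-r}f(y_r)\,\txtd r+(S_{t-s}-\Id)v_s$ treated with~\eqref{hg:1}, one obtains (the singular kernels being integrable since $\delta_1<1$ and the remainders landing in the correct spaces since $\delta_1\ge2\gamma$) the bound $\norm{v,0}_{X,2\gamma,-\eta}\lesssim T^{1-\delta_1}\bigl(1+\norm{y,G(y)}_{X,2\gamma,-\eta}\bigr)$.

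For the rough term $(z,G(y))$, Corollary~\ref{bound:int} already gives
$\norm{z,G(y)}_{X,2\gamma,-\eta}\lesssim|G(y_0)|_{-\eta}+|DG(y_0)\circ G(y_0)|_{-\eta-\gamma}+T^\gamma\norm{G(y),DG(y)\circ G(y)}_{X,2\gamma,-\eta}$,
and Lemma~\ref{no:q} bounds the last factor by $1+\norm{y,G(y)}_{X,2\gamma,-\eta}$. The two initial-data terms are $\lesssim1+|y_0|_{-\eta}$: since $F$ has bounded first derivative it has linear growth into $\widetilde\cB_{\alpha-\vartheta}$ with $\alpha=\delta_2-\eta>1+\tfrac1p$, hence $NF(y_0)\in\cB_{1-\eta}$ and $G(y_0)=A_{-\eta}NF(y_0)\in\cB_{-\eta}$ with $|G(y_0)|_{-\eta}\lesssim1+|y_0|_{-\eta}$; similarly $DG(y_0)\circ G(y_0)=A_{-\sigma}N(DF(y_0)\circ G(y_0))$ is controlled through $N\in\cL(\widetilde\cB_{\alpha-\gamma},\cB_{\varepsilon-\gamma})$, $A_{-\sigma}\in\cL(\cB_{1-\sigma},\cB_{-\sigma})$ and the Lipschitz bound on $DF(\cdot)\circ G(\cdot)$ from Assumption~\ref{ass2}~3). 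Thus $\norm{z,G(y)}_{X,2\gamma,-\eta}\lesssim1+|y_0|_{-\eta}+T^\gamma\norm{y,G(y)}_{X,2\gamma,-\eta}$.

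Finally I add the three bounds and use $T\le1$ to absorb the factors $T^{1-\delta_1}$ and $T^\gamma$ that multiply mere constants, leaving two time-weighted norm terms $T^{1-\delta_1}\norm{y,G(y)}$ and $T^\gamma\norm{y,G(y)}$; here $\delta_1\ge2\gamma$ together with $\gamma>\tfrac13$ gives $1-\delta_1\le1-2\gamma\le\gamma$, so $T^\gamma\le T^{1-\delta_1}$ for $T\le1$ and both are dominated by $T^{1-\delta_1}\norm{y,G(y)}_{X,2\gamma,-\eta}$, which yields exactly the claimed estimate $\norm{y,G(y)}_{X,2\gamma,-\eta}\lesssim1+|y_0|_{-\eta}+T^{1-\delta_1}\norm{y,G(y)}_{X,2\gamma,-\eta}$. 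I expect the only genuinely delicate bookkeeping to be the drift piece — checking that the remainder $R^v_{t,s}=v_{t,s}$ has the right time-Hölder exponents in $\cB_{-\eta-\gamma}$ and $\cB_{-\eta-2\gamma}$, which is where $\delta_1<1$ and the precise shape of the $S$-kernels are used — together with the final exponent comparison $T^\gamma\le T^{1-\delta_1}$; the rest is a direct assembly of Corollary~\ref{bound:int}, Lemma~\ref{no:q} and the standard smoothing estimates.
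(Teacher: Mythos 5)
Your proposal is correct and follows essentially the same route as the paper: decompose $y=S_\cdot y_0+\int_0^\cdot S_{\cdot-r}f(y_r)\,\txtd r+\int_0^\cdot S_{\cdot-r}G(y_r)\,\txtd\X_r$, bound the three pieces via the smoothing estimates, the linear growth of $f$, and Corollary~\ref{bound:int} combined with Lemma~\ref{no:q}, then absorb $T^\gamma$ into $T^{1-\delta_1}$ using $\delta_1\ge 2\gamma$, $\gamma>\tfrac13$ and $T\le 1$. Your explicit treatment of the initial-data terms $|G(y_0)|_{-\eta}$ and $|DG(y_0)\circ G(y_0)|_{-\eta-\gamma}$ is a detail the paper only mentions in passing, but it is consistent with the argument there.
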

    \begin{proof}
        Since the path component of $(y,G(y))$ solves~\eqref{eq:introNew}, we have
        \begin{align*}
            \norm{y, G(y)}_{X,2\gamma,-\eta}&\leq \norm{S_\cdot y_0,0}_{X,2\gamma,-\eta}+\norm{\int_0^\cdot S_{\cdot-r} f(y_r)~\txtd r,0}_{X,2\gamma,-\eta}\\ 
            &+\norm{\int_0^\cdot S_{\cdot-r} A_{-\sigma}NF(y_r)~\txtd \X_r, A_{-\sigma}NF(y_r)}_{X,2\gamma,-\eta}.
        \end{align*}
        Here it is easily to see that  $(S_\cdot y_0,0)\in \cD^{2\gamma}_{X,-\eta}$ and $(\int_0^\cdot S_{\cdot-r} f(y_r)~\txtd r,0)\in \cD^{2\gamma}_{X,-\eta}$ satisfy the bounds
        \begin{align*}
         \norm{S_\cdot y_0,0}_{X,2\gamma,-\eta}\lesssim \abs{y_0}_{-\eta}, \quad \norm{\int_0^\cdot S_{\cdot-r} f(y_r)~\txtd r,0}_{X,2\gamma,-\eta}\lesssim (1+\norm{y}_{\infty, -\eta}) T^{1-\delta_1},
        \end{align*}
       using Assumption~\ref{ass2} 1). 
        If we now combine these estimates with those obtained in Lemma \ref{no:q} and Lemma \ref{bound:int}, we get 
        \begin{align*}
            \norm{y, G(y)}_{X,2\gamma,-\eta}&\lesssim \abs{y_0}_{-\eta}+(1+\norm{y}_{\infty, -\eta}) T^{1-\delta_1}\\
            &+\abs{G(y_0)}_{-\eta}+\abs{DG(y_0)\circ G(y_0)}_{-\eta-\gamma}+ T^\gamma \norm{G(y), DG(y)\circ G(y)}_{X,2\gamma,-\eta}\\ 
            &\lesssim 1+ \abs{y_0}_{-\eta} +T^{1-\delta_1}\norm{y,G(y)}_{X,2\gamma,-\eta} +T^{\gamma}\norm{y,G(y)}_{X,2\gamma,-\eta},
        \end{align*}
        where also the boundedness of the derivative $DG$ was used. Since $T\leq 1$ and $1-\delta_1< \gamma$, due to $\delta_1\geq 2\gamma$, we have $T^\gamma<T^{1-\delta_1}$ which leads to the assertion.
        \qed
    \end{proof}\\
    
    Putting the previous deliberations together, we finally obtain the existence of a global-in-time solution.
    
        \begin{theorem}\em{(Existence of a global-in-time solution for~\eqref{eq:introNew})}\label{thm:global}
            Assume that $f$ and $F$ satisfy the Assumptions \ref{ass2} 3). Then there exists for every initial condition $y_0\in \cB_{-\eta}$ a unique solution $(y,A_{-\sigma}NF(y))=(y,G(y))\in \cD^{2\gamma}_{X,-\eta}\left([0,T]\right)$ to \eqref{eq:introNew}) such that $y$ fulfils \eqref{eq:mildSolution} for all $t\in [0,T]$.
        \end{theorem}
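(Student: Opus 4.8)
The plan is to upgrade the local solution from Theorem~\ref{local} to a global one via the standard continuation argument for rough evolution equations, now unobstructed thanks to the linear a priori bound of Corollary~\ref{bound:s}. First I would invoke Theorem~\ref{local}: under Assumption~\ref{ass2}~1) and 2), which are implied by Assumption~\ref{ass2}~3), there is a maximal existence time $T_{\max}\in(0,T]$ and a unique solution $(y,G(y))\in\cD^{2\gamma}_{X,-\eta}([0,\tau])$ for every $\tau<T_{\max}$, satisfying \eqref{eq:mildSolution}. The usual blow-up alternative then says that either $T_{\max}=T$, in which case we are done after checking the solution extends to the closed interval, or $T_{\max}<T$ and $\|y\|_{\infty,-\eta;[0,\tau]}\to\infty$ as $\tau\uparrow T_{\max}$ (equivalently the controlled-rough-path norm blows up). So the heart of the matter is to rule out the second scenario by an a priori estimate that is \emph{uniform} on $[0,T_{\max})$.

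Next I would derive the global bound by gluing together the local estimates. Fix $\tau<T_{\max}$ and cover $[0,\tau]$ by finitely many subintervals $[t_k,t_{k+1}]$ of length $h$ to be chosen. On each $[t_k,t_{k+1}]$ the restriction of $(y,G(y))$ is the solution of \eqref{eq:introNew} started from $y_{t_k}\in\cB_{-\eta}$, so Corollary~\ref{bound:s} (applied with horizon $h$, noting $h\le 1$) yields
\begin{align*}
    \|y,G(y)\|_{X,2\gamma,-\eta;[t_k,t_{k+1}]}\le C\bigl(1+|y_{t_k}|_{-\eta}\bigr)+Ch^{1-\delta_1}\|y,G(y)\|_{X,2\gamma,-\eta;[t_k,t_{k+1}]},
\end{align*}
with $C=C(\rho_\gamma(\X),\|F\|_{C^3},f)$ independent of $k$ and of $\tau$. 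Choosing $h$ so small that $Ch^{1-\delta_1}\le\tfrac12$ absorbs the last term and gives $\|y,G(y)\|_{X,2\gamma,-\eta;[t_k,t_{k+1}]}\le 2C(1+|y_{t_k}|_{-\eta})$. Since $|y_{t_{k+1}}|_{-\eta}\le \|y\|_{\infty,-\eta;[t_k,t_{k+1}]}\le \|y,G(y)\|_{X,2\gamma,-\eta;[t_k,t_{k+1}]}$, this produces a discrete recursion $|y_{t_{k+1}}|_{-\eta}\le a+b\,|y_{t_k}|_{-\eta}$ with $a,b$ depending only on $C$; iterating over the $\lceil T/h\rceil$ subintervals (a number fixed once and for all by $C,\delta_1,T$, \emph{not} depending on $\tau$) bounds $\sup_{k}|y_{t_k}|_{-\eta}$, hence $\|y\|_{\infty,-\eta;[0,\tau]}$, by a constant $M=M(\rho_\gamma(\X),\|F\|_{C^3},f,|y_0|_{-\eta},T)$ uniform in $\tau<T_{\max}$. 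This contradicts the blow-up alternative unless $T_{\max}=T$.

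Finally I would address the endpoint: the uniform bound $M$ also bounds $\|y,G(y)\|_{X,2\gamma,-\eta;[0,\tau]}$ uniformly, so $(y,G(y))$ extends continuously to $[0,T]$ and, applying Theorem~\ref{local} once more at time $T$ (or a point slightly before it) with the now-controlled data, the extension lies in $\cD^{2\gamma}_{X,-\eta}([0,T])$ and solves \eqref{eq:mildSolution} on the closed interval; uniqueness on $[0,T]$ follows from local uniqueness by a standard connectedness argument. I expect the main obstacle to be bookkeeping the subinterval estimates so that the constants genuinely do not degenerate: one must verify that Corollary~\ref{bound:s} is stated (or can be re-run) on an arbitrary subinterval $[t_k,t_{k+1}]$ with the same constant — this hinges on the H\"older norms of $\X$ being controlled by $\rho_\gamma(\X)$ on all of $[0,T]$ and on the estimates of Lemma~\ref{no:q}, Corollary~\ref{bound:int} and Corollary~\ref{cor:higherreg} being translation-invariant in time — and that the initial Gubinelli derivative $G(y_{t_k})$ entering $\|\xi,\xi'\|$ at each restart is itself bounded by $|y_{t_k}|_{-\eta}$ via boundedness of $F$ and $A_{-\sigma}N$, so that no hidden growth enters the recursion.
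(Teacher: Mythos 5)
Your proposal is correct and takes essentially the same route as the paper: the paper's proof likewise combines the local solution of Theorem~\ref{local} with the linear bound of Corollary~\ref{bound:s} via a ``standard concatenation argument'' (delegated to \cite[Lemma 3.8, Theorem 3.9]{HN21}) to obtain the exponential a priori bound $\|y\|_{\infty,-\eta,[0,T]}\leq M_1 r e^{M_2 T}$ and rule out finite-time blow-up. Your subinterval recursion is precisely that concatenation argument written out, and the caveats you flag (translation-invariance of the estimates, controlling $G(y_{t_k})$ at each restart) are the right ones.
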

        \begin{proof}
         Due to Corollary~\ref{bound:s} we obtain by a standard concatenation argument (compare~\cite[Lemma 3.8]{HN21}) the following a-priori bound for the path component of the solution on a time interval $[0,T]$ for an arbitrary $T>0$. Namely, there exist constants $M_1,M_2>0$ such that 
         \[
         \|y\|_{\infty,-\eta,[0,T]}\leq M_1 r e^{M_2T},
         \]
        where $r:=1\vee|y_0|_{-\eta}$. Therefore, the local solution obtained in Theorem~\ref{local} cannot exhibit a finite-time blow-up as justified in~\cite[Theorem 3.9]{HN21}. 
            \qed 
        \end{proof}\\
      Furthermore, we point out an advantage of the rough path approach, which gives stability of the solution with respect to the initial condition and the noise term in contrast to the It\^o calculus. This is an immediate consequence of the pathwise construction of the solutions of~\eqref{eq:intro}. To this aim, we let $\widetilde{\X}=(\widetilde{X},\widetilde{\XX})$ be another $\gamma$-H\"older rough path with $\widetilde{X}_0=0$ and consider two controlled rough paths $(u,u^\prime)\in \cD^{2\gamma}_{X,-\eta}\left([0,T]\right)$ respectively $(v,v^\prime)\in \cD^{2\gamma}_{\widetilde{X},-\eta}\left([0,T]\right)$
      and define for $0\leq \gamma^\prime <\gamma$ the metric
        \begin{align}\label{def:distance}
            \begin{split}
            d_{\gamma^\prime,\gamma,-\eta}(u,u^\prime,v,v^\prime)=\norm{u-v}_{\infty,-\eta}&+\norm{u^\prime-v^\prime}_{\infty,-\eta-\gamma}+[u^\prime-v^\prime]_{\gamma^\prime,-\eta-2\gamma}\\
            &+[R^u-R^v]_{\gamma^\prime,-\eta-\gamma}+[R^u-R^v]_{2\gamma^\prime,-\eta-2\gamma}.
            \end{split}
        \end{align}
       The dependence of $d$ on $\X$ and $\widetilde{\X}$ is not displayed here for notational simplicity. For our aims, we first state stability results for the rough integration and composition with smooth functions.
        \begin{lemma}\label{lem:stability}
            \begin{itemize}
                \item[i)] Let $(y,y^\prime)\in \cD^{2\gamma}_{X,-\eta}\left([0,T]\right)$ and $(\widetilde{y},\widetilde{y}^\prime)\in \cD^{2\gamma}_{\widetilde{X},-\eta}\left([0,T]\right)$. If $\rho_\gamma(\X),\rho_\gamma(\widetilde{\X}), \norm{y,y^\prime}_{X,2\gamma,-\eta}, \norm{\widetilde{y},\widetilde{y}^\prime}_{\widetilde{X},2\gamma,-\eta}$ are bounded by the same constant, then for every $\frac{1}{3}< \gamma^\prime\leq\gamma$ we have
                \begin{align}\label{est:Stability}
                    d_{\gamma^\prime,\gamma,-\eta}(z ,z^\prime,\widetilde{z},\widetilde{z}^\prime)\lesssim d_{\gamma,[0,T]}(\X,\widetilde{\X})+\abs{y_0-\widetilde{y}_0}_{-\eta}+d_{\gamma^\prime,\gamma,-\eta}(y,y^\prime,\widetilde{y},\widetilde{y}^\prime) T^{\gamma-\gamma^\prime},
                \end{align}
                where $z_t:=\int_0^t S_{t-s} y_s~\txtd \X_s,~ z^\prime:=y$ and analogously  $\widetilde{z}_t:=\int_0^t S_{t-s} \widetilde{y}_s~\txtd \widetilde{\X}_s,~ \widetilde{z}^\prime:=\widetilde{y}$. 
                \item[ii)] In addition to the assumptions in i) we suppose that $F$ satisfies Assumption \ref{ass2} 2). Then for every $\frac{1}{3}< \gamma^\prime\leq\gamma$ we have
                \begin{align*}
                    d_{\gamma^\prime,\gamma,-\eta}(z ,z^\prime,\widetilde{z},\widetilde{z}^\prime)\lesssim d_{\gamma,[0,T]}(\X,\widetilde{\X})+\abs{y_0-\widetilde{y}_0}_{-\eta}+d_{\gamma^\prime,\gamma,-\eta}(y,y^\prime,\widetilde{y},\widetilde{y}^\prime) T^{\gamma-\gamma^\prime},
                \end{align*}
                where $z_t:=F(y_t), z^\prime_t:=DF(y_t)\circ y^\prime_t$ and analogously $\widetilde{z}_t:=F(\widetilde{y}_t),\widetilde{z}^\prime_t:=DF(\widetilde{y}_t)\circ \widetilde{y}^\prime_t$.
            \end{itemize}
        \end{lemma}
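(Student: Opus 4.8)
The plan is to prove i) and ii) as two-input, perturbative versions of Corollary~\ref{cor:higherreg} and Lemma~\ref{lem:composition}~ii), carried out at the lower H\"older exponent $\gamma'$ while exploiting the standing hypothesis that $\rho_\gamma(\X),\rho_\gamma(\widetilde\X),\norm{y,y'}_{X,2\gamma,-\eta},\norm{\widetilde y,\widetilde y'}_{\widetilde X,2\gamma,-\eta}$ are all bounded by one constant, so that every $\gamma$-H\"older norm of the two inputs may be treated as an $O(1)$ factor. The mechanism producing the factor $T^{\gamma-\gamma'}$ in \eqref{est:Stability} is the elementary observation that, on $[0,T]$ with $T\le1$, one has $|t-s|^{\theta}\le T^{\theta-\theta'}|t-s|^{\theta'}$ whenever $\theta\ge\theta'$: each time an increment $X_{t,s},\XX_{t,s}$ or a difference of remainders is paired with a $\gamma'$-denominator coming from the output metric, a strictly positive power of $T$, always at least $\gamma-\gamma'$, is freed; conversely the two anchoring quantities $d_{\gamma,[0,T]}(\X,\widetilde\X)$ and $|y_0-\widetilde y_0|_{-\eta}$ are left without a power of $T$ because they enter through the initial values and through the rough-path metric \eqref{rp:metric} directly.

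\textbf{Part i).} Recall $z'=y$, $\widetilde z'=\widetilde y$, so the summands $\norm{z-\widetilde z}_{\infty,-\eta}$, $\norm{z'-\widetilde z'}_{\infty,-\eta-\gamma}$ and $[z'-\widetilde z']_{\gamma',-\eta-2\gamma}$ of the metric \eqref{def:distance} are treated by combining \eqref{est:hoelder:y}, which expresses the H\"older seminorms of $y-\widetilde y$ through $\norm{y'-\widetilde y'}_{\infty}$, $[R^y-R^{\widetilde y}]$ and the increments of $X,\widetilde X$ (the term $\norm{\widetilde y'}_{\infty}[X-\widetilde X]$ is where $d_{\gamma,[0,T]}(\X,\widetilde\X)$ appears), with a first integration from $0$, which turns sup-norms into $|y_0-\widetilde y_0|_{-\eta}$ plus H\"older seminorms, and with the integral estimate \eqref{estimate:integral} evaluated at $s=0$. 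The substantial part is the remainder. Writing $z_{t,s}=\int_s^tS_{t-r}y_r\,\txtd\X_r+(S_{t-s}-\Id)z_s$ and using \eqref{estimate:integral}, one decomposes
\[
R^z_{t,s}=(S_{t-s}-\Id)y_sX_{t,s}+S_{t-s}y'_s\XX_{t,s}+\cR^{y}_{t,s}+(S_{t-s}-\Id)z_s,
\]
with $\cR^{y}_{t,s}:=\int_s^tS_{t-r}y_r\,\txtd\X_r-S_{t-s}y_sX_{t,s}-S_{t-s}y'_s\XX_{t,s}$ the integral remainder, and analogously for $\widetilde z$; then one estimates $R^z_{t,s}-R^{\widetilde z}_{t,s}$ summand by summand via $ab-\widetilde a\widetilde b=(a-\widetilde a)b+\widetilde a(b-\widetilde b)$, the smoothing bounds \eqref{hg:1}--\eqref{hg:2}, and, for the $(S_{t-s}-\Id)(z_s-\widetilde z_s)$ piece, the bound on $\norm{z-\widetilde z}_{\infty,-\eta}$ obtained above. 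The one genuinely new ingredient is a stability version of the rough integral estimate: rerun the dyadic Cauchy-sequence argument in the proof of Lemma~\ref{lemma:3.7} on the difference of the two germs $\Xi_{s,t}=S_{t-s}(y_sX_{t,s}+y'_s\XX_{t,s})$ and $\widetilde\Xi_{s,t}$, whose elementary telescoping increments are controlled, through Chen's relation \eqref{chen} and the $C^{\gamma}\cap C^{2\gamma}$ bounds on $R^y-R^{\widetilde y}$, $y'-\widetilde y'$, $X-\widetilde X$ and $\XX-\widetilde\XX$, by $|t-s|^{3\gamma}$ times $d_{\gamma,[0,T]}(\X,\widetilde\X)+d_{\gamma',\gamma,-\eta}(y,y',\widetilde y,\widetilde y')+|y_0-\widetilde y_0|_{-\eta}$, so that the sewing furnishes the same bound for $\cR^{y}-\cR^{\widetilde y}$. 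Collecting the five contributions and applying the $T\le1$ trade-off yields \eqref{est:Stability}.

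\textbf{Part ii).} This is the two-input analogue of Lemma~\ref{lem:composition}~ii). The key is to write
\[
R^z_{t,s}=\int_0^1\big(DF(y_s+ry_{t,s})-DF(y_s)\big)\,\txtd r\,y_{t,s}+DF(y_s)\,R^y_{t,s},
\]
which confines the dependence on $\X$ to the H\"older seminorm of $y$ and to $R^y$. Using repeatedly the identity $DF(a)-DF(b)=\int_0^1D^2F(b+r(a-b))\,\txtd r\,(a-b)$ and its analogue for $D^2F$, the boundedness of $D^2F$, $D^3F$ guaranteed by Assumption~\ref{ass2}~2), and \eqref{est:hoelder:y} (through which $d_{\gamma,[0,T]}(\X,\widetilde\X)$ enters via $[y-\widetilde y]$), every summand of $d_{\gamma',\gamma,-\eta}(z,z',\widetilde z,\widetilde z')$ reduces to $\norm{y-\widetilde y}_{\infty}$, $\norm{y'-\widetilde y'}_{\infty}$, $[y'-\widetilde y']_{\gamma'}$, $[R^y-R^{\widetilde y}]$ and $|y_0-\widetilde y_0|_{-\eta}$, the last of these arising from $z_0-\widetilde z_0=F(y_0)-F(\widetilde y_0)$ and the Lipschitz bound on $F$. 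Since $F$ maps the scale $(\cB_\beta)$ into $(\widetilde\cB_\beta)$, the derivatives $D^kF(\cdot)$ are viewed in $\cL(\cB_{-\eta-\vartheta}^{\otimes k},\widetilde\cB_{-\eta-\vartheta+\delta_2})$ exactly as in the proof of Lemma~\ref{lem:composition}, and no new difficulty occurs; the $T\le1$ trade-off again supplies the factor $T^{\gamma-\gamma'}$.

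\textbf{Main obstacle.} The hard part is the stability of the rough integral in i): setting up the difference germ $\Xi-\widetilde\Xi$ so that its increment bound depends linearly on $d_{\gamma,[0,T]}(\X,\widetilde\X)$ with no leftover dependence on the individual rough paths, and then bookkeeping the time exponents so that precisely the input-distance term acquires the small factor $T^{\gamma-\gamma'}$ — the feature that makes \eqref{est:Stability} and its counterpart in ii) usable in the contraction argument for continuous dependence of the solution and for the construction of the random dynamical system in Section~\ref{sec:rds}.
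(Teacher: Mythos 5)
Your proposal is correct and follows essentially the same route as the paper: for part~i) you rerun the dyadic sewing argument of Lemma~\ref{lemma:3.7} on the difference of germs, exactly as the paper does with $\Xi=\xi-\widetilde\xi$ and the product-difference expansion that appears as \eqref{eq:Appr}, then assemble the five contributions to $d_{\gamma',\gamma,-\eta}$ term by term; for part~ii) you perturb the composition estimate of Lemma~\ref{lem:composition} via the Taylor-remainder identity for $DF$, which is precisely what the paper invokes. Your extra detail — the explicit four-term decomposition of $R^z_{t,s}$, the observation that $|t-s|^{\gamma}\le T^{\gamma-\gamma'}|t-s|^{\gamma'}$ is where the small factor comes from, and the use of $z_0=\widetilde z_0$ and $z'_0-\widetilde z'_0=y_0-\widetilde y_0$ to anchor the sup-norms — is a faithful fleshing-out of the paper's sketch rather than a different argument.
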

        \begin{proof}
            \begin{itemize}
                \item[i)] The idea of the proof is to analyze the difference of the stochastic convolutions $z-\widetilde{z}$. As in the proof of Lemma~\ref{lemma:3.7} one needs an approximating sequence $\Xi:=\xi-\widetilde{\xi}$, where $\xi_{t,s}:=y_sX_{t,s} +y_s^\prime \XX_{t,s}$ and $\widetilde{\xi}_{t,s}:=\widetilde{y}_s\widetilde{X}_{t,s} +\widetilde{y}_s^\prime \widetilde{\XX}_{t,s}$ are the approximations of the individual integrals $z$ and $\widetilde{z}$. 
                This reads as
                \begin{align}\label{eq:Appr}
                    \Xi_{t,s}=(y_s-\widetilde{y}_s)X_{t,s}+\widetilde{y}_s(X_{t,s}-\widetilde{X}_{t,s})+(y^\prime_{s}-\widetilde{y}^\prime_s)\XX_{t,s}+\widetilde{y}^\prime_s (\XX_{t,s}-\widetilde{\XX}_{t,s}).
                \end{align}
                 Combining now \eqref{eq:Appr} with the proofs of Lemma~\ref{lemma:3.7} and Corollary \ref{cor:RoughConv} we derive
                \begin{align*}
                    \abs{z_{t,s}-\widetilde{z}_{t,s}-S_{t-s}\Xi_{t,s}}_{-\eta-i\gamma}\lesssim (d_{\gamma,[0,T]}(\X,\widetilde{\X})+ d_{\gamma^\prime,\gamma,-\eta}(y,y^\prime,\widetilde{y},\widetilde{y}^\prime)T^{\gamma^\prime})\abs{t-s}^{i\gamma^\prime},
                \end{align*}
                for $i=0,1,2$. To get now \eqref{est:Stability} we have to estimate the individual terms of the distance \eqref{def:distance} similar to Theorem \ref{thm:RoughConv}, see also \cite[Lemma 3.13]{GHairer}. For example, we have 
                \begin{align*}
                    \abs{z_t-\widetilde{z_t}}_{-\eta}&\leq \abs{z_{t,0}-\widetilde{z}_{t,0}-S_{t}\Xi_{t,0}}_{-\eta}+
                    \abs{S_{t}\Xi_{t,0}}_{-\eta}\\
                    &\lesssim d_{\gamma,[0,T]}(\X,\widetilde{\X})+ d_{\gamma^\prime,\gamma,-\eta}(y,y^\prime,\widetilde{y},\widetilde{y}^\prime)T^{\gamma^\prime}+T^\gamma \norm{y-\widetilde{y}}_{\infty,-\eta}\rho_\gamma(\X)\\ &+T^\gamma\norm{\widetilde{y}}_{\infty,-\eta}d_{\gamma,[0,T]}(\X,\widetilde{\X})+\norm{y^\prime-\widetilde{y}^\prime}_{\infty,-\eta}\rho_\gamma(\X)+\norm{\widetilde{y}^\prime}_{\infty,-\eta}d_{\gamma,[0,T]}(\X,\widetilde{\X})\\
                        &\lesssim d_{\gamma,[0,T]}(\X,\widetilde{\X})+ d_{\gamma^\prime,\gamma,-\eta}(y,y^\prime,\widetilde{y},\widetilde{y}^\prime)T^{\gamma^\prime}.
                \end{align*}
                The remaining terms of \eqref{def:distance} can be handled analogously. 
                \item[ii)] The statement can be obtained following the steps of the proof of Lemma \ref{lem:composition}, see also \cite[Lemma 3.14]{GHairer}.
            \end{itemize}
            \qed
        \end{proof}\\
      Based on this result we can establish the continuous dependence of the solution with respect to the noise and initial data.
        \begin{theorem}\em{(Stability of the solution)}\label{thm:stability} 
            Assume that $f$ and $F$ satisfy the Assumptions \ref{ass2} 3). Let $y_0,\widetilde{y}_0 \in \cB_{-\eta}$ be two initial conditions  and let $(y,G(y))\in \cD^{2\gamma}_{X,-\eta}\left([0,T]\right), (\widetilde{y},G(\widetilde{y}))\in \cD^{2\gamma}_{\widetilde{X},-\eta}\left([0,T]\right)$ be the solutions of \eqref{eq:introNew} driven by $\X$ respectively $\widetilde{\X}$ with initial conditions $y_0$ and $\widetilde{y}_0$. If $\rho_\gamma(\X),\rho_\gamma(\widetilde{\X}),\norm{y_0}_{-\eta},\norm{\widetilde{y}_0}_{-\eta}$ are bounded by the same constant, then for every $\frac{1}{3}< \gamma^\prime<\gamma$ we have
            \begin{align*}
                d_{\gamma^\prime,\gamma,-\eta}(y,G(y),\widetilde{y},G(\widetilde{y}))\lesssim d_{\gamma,[0,T]}(\X,\widetilde{\X})+\abs{y_0-\widetilde{y}_0}_{-\eta}.
            \end{align*}
        \end{theorem}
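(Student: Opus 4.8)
\noindent The plan is to follow the template of the global existence proof (Theorem~\ref{thm:global}, Corollary~\ref{bound:s}): first establish the bound on a short interval $[0,\tau]$ by absorbing every contribution that carries a positive power of $\tau$ into the left-hand side, and then propagate it to $[0,T]$ by concatenation. A preliminary observation is that the common bound on $\rho_\gamma(\X),\rho_\gamma(\widetilde{\X}),|y_0|_{-\eta},|\widetilde{y}_0|_{-\eta}$, combined with the a~priori estimate of Corollary~\ref{bound:s} (extended to $[0,T]$ as in the proof of Theorem~\ref{thm:global}), shows that $\norm{y,G(y)}_{X,2\gamma,-\eta}$ and $\norm{\widetilde{y},G(\widetilde{y})}_{\widetilde{X},2\gamma,-\eta}$ are dominated by a single constant on \emph{every} subinterval; hence Lemma~\ref{lem:stability} is applicable throughout with constants that do not degenerate as the subinterval is varied.

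First I would write, using the mild formulation~\eqref{eq:mildSolution} for both solutions,
\begin{align*}
y_t-\widetilde{y}_t=S_t(y_0-\widetilde{y}_0)+\int_0^t S_{t-r}\big(f(y_r)-f(\widetilde{y}_r)\big)~\txtd r+\Big(\int_0^t S_{t-r}G(y_r)~\txtd\X_r-\int_0^t S_{t-r}G(\widetilde{y}_r)~\txtd\widetilde{\X}_r\Big),
\end{align*}
noting that the Gubinelli derivatives of the three summands are $0,0$ and $G(y)$ (respectively $G(\widetilde{y})$), consistently with $G(y)$ being the Gubinelli derivative of $y$. The semigroup term contributes at most $C|y_0-\widetilde{y}_0|_{-\eta}$ to $d_{\gamma',\gamma,-\eta}$: its sup-part is controlled by boundedness of $S_\cdot$ on $\cB_{-\eta}$, and its remainder, being $(S_{t-s}-\Id)S_s(y_0-\widetilde{y}_0)$, is handled by~\eqref{hg:1} exactly as the term $(S_\cdot y_0,0)$ in Corollary~\ref{bound:s}, producing an extra factor $T^{\gamma-\gamma'}\leq1$. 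The drift term is treated by the Lipschitz assumption~\ref{ass2}~1) together with the parabolic estimates already used for the drift convolution in Corollary~\ref{bound:s}, yielding a bound of the form $C\tau^{1-\delta_1}d_{\gamma',\gamma,-\eta}(y,G(y),\widetilde{y},G(\widetilde{y}))$ with $1-\delta_1>0$, since $\norm{y-\widetilde{y}}_{\infty,-\eta}$ is dominated by $d_{\gamma',\gamma,-\eta}$.

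The main work is the rough convolution term, which I would handle by chaining the two stability lemmas. Applying Lemma~\ref{lem:stability}~ii) to the composition with $F$ bounds the distance between $(F(y),DF(y)\circ G(y))$ and $(F(\widetilde{y}),DF(\widetilde{y})\circ G(\widetilde{y}))$ by $d_{\gamma,[0,\tau]}(\X,\widetilde{\X})+|y_0-\widetilde{y}_0|_{-\eta}+\tau^{\gamma-\gamma'}d_{\gamma',\gamma,-\eta}(y,G(y),\widetilde{y},G(\widetilde{y}))$. This is then pushed through the \emph{linear} operator $A_{-\sigma}N$, which by the scale-shifting argument of Lemma~\ref{lem:GubDeriv} and Corollary~\ref{new:crp} maps the relevant controlled rough paths into $\cD^{2\gamma}_{X,-\eta}$; being linear, it transfers the stability estimate verbatim to $(G(y),DG(y)\circ G(y))$ versus $(G(\widetilde{y}),DG(\widetilde{y})\circ G(\widetilde{y}))$ (here $DG=A_{-\sigma}NDF$ and $R^{A_{-\sigma}Nw}=A_{-\sigma}NR^{w}$ by linearity). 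Finally, Lemma~\ref{lem:stability}~i) applied to the rough integrals of these two controlled rough paths gives a bound of the same type for the distance of the two convolutions, whose Gubinelli derivatives are precisely $G(y)$ and $G(\widetilde{y})$. Collecting the three contributions yields, on $[0,\tau]$,
\begin{align*}
d_{\gamma',\gamma,-\eta}(y,G(y),\widetilde{y},G(\widetilde{y}))\le C\big(d_{\gamma,[0,\tau]}(\X,\widetilde{\X})+|y_0-\widetilde{y}_0|_{-\eta}\big)+C\big(\tau^{\gamma-\gamma'}+\tau^{1-\delta_1}\big)\,d_{\gamma',\gamma,-\eta}(y,G(y),\widetilde{y},G(\widetilde{y})),
\end{align*}
so choosing $\tau$ with $C(\tau^{\gamma-\gamma'}+\tau^{1-\delta_1})\le\frac12$ (possible since $\gamma'<\gamma$) gives the asserted estimate on $[0,\tau]$.

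It then remains to iterate. On $[\tau,2\tau]$ the solutions solve the same equation with initial data $y_\tau,\widetilde{y}_\tau$, and $|y_\tau-\widetilde{y}_\tau|_{-\eta}\le\norm{y-\widetilde{y}}_{\infty,-\eta,[0,\tau]}$ is already bounded by $C\big(d_{\gamma,[0,T]}(\X,\widetilde{\X})+|y_0-\widetilde{y}_0|_{-\eta}\big)$; since $\tau$ is uniform, repeating this $\lceil T/\tau\rceil$ times and assembling the controlled-rough-path pieces as in~\cite[Lemma 3.8]{HN21} (used in the proof of Theorem~\ref{thm:global}) produces the claimed bound on $[0,T]$, linear in $d_{\gamma,[0,T]}(\X,\widetilde{\X})+|y_0-\widetilde{y}_0|_{-\eta}$ with a constant depending only on $T$ and the common bound. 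The main obstacle I anticipate is keeping every estimate linear in the data while passing through $A_{-\sigma}$, which is unbounded on $\cB_{\varepsilon-2\gamma}$: one cannot merely invoke boundedness of $A_{-\sigma}N$ but must redo the scale-shifted computations of Lemma~\ref{lem:GubDeriv} and Lemma~\ref{no:q} for differences of two controlled rough paths, relying on the Lipschitz-type bound for $DF(\cdot)\circ G(\cdot)$ from Assumption~\ref{ass2}~3); a secondary difficulty is ensuring that the short-time length $\tau$ and the absorption constant are uniform across all subintervals, which is exactly what the uniform a~priori bounds supply.
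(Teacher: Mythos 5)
Your proposal is correct and follows essentially the same route as the paper's proof: rewrite the difference via the mild formulation, combine the stability estimates of Lemma~\ref{lem:stability} with the Lipschitz bound on the drift to reach an inequality of the form
\begin{align*}
d_{\gamma',\gamma,-\eta}(y,G(y),\widetilde y,G(\widetilde y)) \lesssim d_{\gamma,[0,T]}(\X,\widetilde\X) + |y_0-\widetilde y_0|_{-\eta} + T^{\gamma-\gamma'}\,d_{\gamma',\gamma,-\eta}(y,G(y),\widetilde y,G(\widetilde y)),
\end{align*}
and absorb the last term by shrinking the time horizon. The paper's proof is a two-line version of exactly this. The one place where you go further is the concatenation step: the paper only says ``choosing $T$ small enough proves the statement'' and leaves the iteration to arbitrary $T\leq 1$ implicit, whereas you carry it out explicitly, justifying that the short-time length and the absorption constant are uniform via the a~priori bound of Corollary~\ref{bound:s}. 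You also flag correctly that Lemma~\ref{lem:stability}~ii) only covers the composition with $F$, so passing through the unbounded scale-shift $A_{-\sigma}N$ requires a difference version of the computations in Lemma~\ref{lem:GubDeriv} and Lemma~\ref{no:q}; the paper also glosses over this, so your remark is a fair and accurate account of what would need to be checked rather than a deviation in method.
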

        \begin{proof}
          Using Lemma~\ref{lem:stability} one infers that
            \begin{align*}
                d_{\gamma^\prime,\gamma,-\eta}(y,G(y),\widetilde{y},G(\widetilde{y}))\lesssim d_{\gamma^\prime,\gamma,-\eta}(y,G(y),\widetilde{y},G(\widetilde{y})) T^\kappa+ d_{\gamma,[0,T]}(\X,\widetilde{\X})+\abs{y_0-\widetilde{y}_0}_{-\eta},
            \end{align*}
            where $0<\kappa:=\gamma-\gamma^\prime$. Choosing $T$ small enough proves the statement.
            \qed
        \end{proof}
        
       
        \paragraph{The Young case.}
        For the sake of completeness, we now consider Dirichlet boundary noise in the Young regime, i.e.~if the random input~$X\in C^{\widetilde{\gamma}}(\R)$ for $\widetilde{\gamma}\in(\frac{1}{2},1)$. We denote, as in Remark \ref{rem:dirichlet}, by $\mathfrak{D}$ the solution operator of~\eqref{eq:bvp} with $\widetilde{\cC}=\gamma_\partial$. In this case, the domain $D(A)$ is different now, which means that the extrapolation-interpolation scale according to $A$ changes. To point that out, we denote the extrapolation spaces by $\cB_\beta^\mathfrak{D}$ and $\widetilde{\cB}^\mathfrak{D}_{\beta}:=B^{\beta-\frac{1}{p}}_{p,p}(\partial \cO)$. In this case the spaces $\cB^{\mathfrak{D}}_\beta$ are given by
        \begin{align*}
\cB^\mathfrak{D}_{\frac{\beta}{2}}:=H^{\beta,p}_{\widetilde{\cC}}(\cO):=
            \begin{cases}
                \{ u\in H^{\beta,p}(\cO) : \gamma_\partial u =0 \},& \beta>\frac{1}{p}\\
                H^{\beta,p}(\cO) ,& -2+\frac{1}{p}<\beta<\frac{1}{p}.
            \end{cases}
        \end{align*}
        Then, as justified in Remark \ref{rem:dirichlet}, the Dirichlet operator $\mathfrak{D}$ is bounded from $\widetilde{\cB}_\beta^\mathfrak{D}$ to $\cB^\mathfrak{D}_{\varepsilon_\mathfrak{D}}$ with $\varepsilon_\mathfrak{D}<\frac{1}{2p}$. Furthermore, the boundary value problem~\eqref{eq:bvp} (with $\cC$ replaced by $\widetilde{\cC}$) has a strong solution for $\beta>\frac{1}{p}$. 
        However, for the definition of Young's integral~\eqref{young} we need to consider paths which are continuous in $\cB^\mathfrak{D}_\beta$ and $\widetilde{\gamma}$-H\"older continuous with values in $\cB^\mathfrak{D}_{\beta-\widetilde{\gamma}}$. This means that the index $\beta-\widetilde{\gamma}$  can become negative if we only assume that $\beta>\frac{1}{p}$. In this case the theory of the interpolation-extrapolation scale in \cite{Amann} breaks down. To overcome this issue, we additionally assume that $\beta>1+\frac{1}{p}$. Then  $\beta-\widetilde{\gamma}>\frac{1}{p}$ and $\mathfrak{D}:\widetilde{\cB}^\mathfrak{D}_{\beta-\widetilde{\gamma}} \to \cB^\mathfrak{D}_{\varepsilon_\mathfrak{D}-\widetilde{\gamma}}$. Furthermore, just as in the Neumann case, the condition $\widetilde{\gamma}>1-\varepsilon_\mathfrak{D}$ also needs to be satisfied. In the Neumann case, this condition automatically holds for rough noise, i.e.~$\gamma\in (\frac{1}{3},\frac{1}{2}]$. For Dirichlet boundary noise in the Young regime, this leads to an additional restriction on the regularity of the noise, compare~\cite{DuncanMaslowski}. Therefore we choose $\widetilde{\gamma}\in (1-\frac{1}{2p},1)$.  Under these assumptions, we show that it is possible to incorporate Dirichlet boundary noise in~\eqref{eq:intro}. This SPDE can be now rewritten as
        
        \begin{align}\label{eq:introNewDir}
            \begin{cases}
                  \txtd y = \left(Ay + f(y)\right)~\txtd t + A_{-\sigma_\mathfrak{D}}\mathfrak{D}F(y)~\txtd X_t,\\
                  y(0)=y_0\in \cB^\mathfrak{D}_{-\eta_\mathfrak{D}},
            \end{cases}
        \end{align}
         where $\eta_\mathfrak{D}:=1-\varepsilon_\mathfrak{D}$ and $\sigma_\mathfrak{D}:=\eta_\mathfrak{D}-\tilde{\gamma}$. 
          \begin{assumptions}(Young case)\label{ass3}
            There exists $\delta_2> \eta_\mathfrak{D}+1+\frac{1}{p}$ such that for any $\vartheta\in \{0,\tilde{\gamma} \}$ the diffusion term $F:\cB^\mathfrak{D}_{-\eta_\mathfrak{D}-\vartheta}\to \widetilde{\cB}^\mathfrak{D}_{-\eta_\mathfrak{D}-\vartheta+\delta_2}$ is two times continuously Fr\'{e}chet differentiable with bounded derivatives.
        \end{assumptions}
         Based on the arguments of Theorem~\ref{thm:global} we derive. 
        \begin{theorem}\label{globalDirichlet}{\em (Dirichlet boundary noise in the Young case)}
            Let $X\in C^{\tilde{\gamma}} (\R)$ with $\tilde{\gamma}\in (1-\frac{1}{2p},1)$. Assume that $f$ and $F$ satisfy Assumption \ref{ass2} 3) replacing \ref{ass2} 2) with \ref{ass3}. Then there exists for every initial condition $y_0\in \cB^\mathfrak{D}_{-\eta_\mathfrak{D}}$ a unique mild solution $y\in C(\cB^\mathfrak{D}_{-\eta_{\mathfrak{D}}})\cap C^{\widetilde{\gamma}}(\cB^\mathfrak{D}_{-\eta_{\mathfrak{D}}-\widetilde{\gamma}})$ that satisfies  for all $t\in [0,T]$
            $$ y_t=S_t y_0 + \int_0^t S_{t-r} f(y_r)~\txtd r+\int_0^t S_{t-r} A_{-\sigma}\mathfrak{D}F(y_r)~\txtd X_r,$$
where the integral is understood in the Young sense~\eqref{young}.

        \end{theorem}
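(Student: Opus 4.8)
The plan is to run the programme of Theorems~\ref{local} and~\ref{thm:global} with the rough integral of Theorem~\ref{integral} replaced by the Young integral~\eqref{young}--\eqref{est:young} and the Neumann operator $N$ replaced by the Dirichlet operator $\mathfrak{D}$. Recall from the discussion preceding the statement that $\beta>1+\frac1p$ forces $\beta-\widetilde\gamma>\frac1p$, so that $\mathfrak{D}$ is bounded from $\widetilde{\cB}^\mathfrak{D}_{\beta}$ into $\cB^\mathfrak{D}_{\varepsilon_\mathfrak{D}}$ and from $\widetilde{\cB}^\mathfrak{D}_{\beta-\widetilde\gamma}$ into $\cB^\mathfrak{D}_{\varepsilon_\mathfrak{D}-\widetilde\gamma}$ for some $\varepsilon_\mathfrak{D}<\frac1{2p}$; since $\widetilde\gamma>1-\frac1{2p}$ one can moreover fix $\varepsilon_\mathfrak{D}$ with $1-\widetilde\gamma<\varepsilon_\mathfrak{D}<\frac1{2p}$, so that $\eta_\mathfrak{D}=1-\varepsilon_\mathfrak{D}\in(0,1)$ and $\varepsilon_\mathfrak{D}>1-\widetilde\gamma$, which makes $\sigma_\mathfrak{D}=\eta_\mathfrak{D}-\widetilde\gamma$ an admissible extrapolation index.

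First I would establish the Young analogues of Lemmas~\ref{lem:CRP}, \ref{lemma:3.7} and~\ref{sigma}. For $y$ in $\mathfrak{C}:=C(\cB^\mathfrak{D}_{-\eta_\mathfrak{D}})\cap C^{\widetilde\gamma}(\cB^\mathfrak{D}_{-\eta_\mathfrak{D}-\widetilde\gamma})$ and $F$ as in Assumption~\ref{ass3}, the chain rule for $C^2$-maps between Banach scales gives $F(y)\in C(\widetilde{\cB}^\mathfrak{D}_{-\eta_\mathfrak{D}+\delta_2})\cap C^{\widetilde\gamma}(\widetilde{\cB}^\mathfrak{D}_{-\eta_\mathfrak{D}+\delta_2-\widetilde\gamma})$ with a bound \emph{linear} in $\|y\|_{\mathfrak{C}}$ --- this linearity is the decisive simplification compared with Lemma~\ref{lem:composition}, because in the Young regime composition with a smooth function produces no quadratic term. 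Since $\delta_2-\eta_\mathfrak{D}>1+\frac1p$, the operator $\mathfrak{D}$ maps $\widetilde{\cB}^\mathfrak{D}_{-\eta_\mathfrak{D}+\delta_2}$ into $\cB^\mathfrak{D}_{\varepsilon_\mathfrak{D}}=\cB^\mathfrak{D}_{1-\eta_\mathfrak{D}}$ and $\widetilde{\cB}^\mathfrak{D}_{-\eta_\mathfrak{D}+\delta_2-\widetilde\gamma}$ into $\cB^\mathfrak{D}_{1-\sigma_\mathfrak{D}}$, so by Remark~\ref{rem:Extrapolation} (as in Lemma~\ref{lem:GubDeriv}) we get $A_{-\sigma_\mathfrak{D}}\mathfrak{D}F(y)\in\mathfrak{C}$. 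A sewing argument for Young integrals --- the one-line counterpart of Lemma~\ref{lemma:3.7} based on~\eqref{est:young} --- then shows that $t\mapsto\int_0^t S_{t-r}\mathfrak{D}F(y_r)\,\txtd X_r$ exists in the $\cB^\mathfrak{D}_{\varepsilon_\mathfrak{D}-\widetilde\gamma+\beta}$ topology for every $\beta<2\widetilde\gamma$; taking $\beta:=1-\varepsilon_\mathfrak{D}+\widetilde\gamma$, which is $<2\widetilde\gamma$ exactly because $\widetilde\gamma>1-\varepsilon_\mathfrak{D}$, puts the convolution into $D(A)=\cB^\mathfrak{D}_1$ and justifies the interchange
\begin{align*}
    A\int_0^t S_{t-r}\mathfrak{D}F(y_r)\,\txtd X_r=\int_0^t S_{t-r}A_{-\sigma_\mathfrak{D}}\mathfrak{D}F(y_r)\,\txtd X_r ,
\end{align*}
the right-hand side being a genuine Young convolution of the path $A_{-\sigma_\mathfrak{D}}\mathfrak{D}F(y)\in\mathfrak{C}$; this is precisely why~\eqref{eq:introNewDir} is the correct reformulation of~\eqref{eq:intro}.

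With these ingredients in place I would obtain local existence by Banach's fixed point theorem exactly as in Theorem~\ref{local}: on a short interval $[0,\tau]$ the map
\begin{align*}
    \Phi(u):=S_\cdot y_0+\int_0^\cdot S_{\cdot-r}f(u_r)\,\txtd r+\int_0^\cdot S_{\cdot-r}A_{-\sigma_\mathfrak{D}}\mathfrak{D}F(u_r)\,\txtd X_r
\end{align*}
leaves a ball of $\mathfrak{C}([0,\tau])$ centred at $S_\cdot y_0+\int_0^\cdot S_{\cdot-r}A_{-\sigma_\mathfrak{D}}\mathfrak{D}F(y_0)\,\txtd X_r$ invariant and is a contraction there, the estimates coming from~\eqref{est:young} (with its gain $2\widetilde\gamma-\beta$), the Lipschitz continuity and linear growth of $f$, the $C^2$-bounds on $F$, and the boundedness of $\mathfrak{D}$ and $A_{-\sigma_\mathfrak{D}}$ between the relevant scales, all multiplied by a positive power $\tau^{\kappa}$. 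For the global statement, the absence of quadratic terms yields at once the a priori bound
\begin{align*}
    \|y\|_{\mathfrak{C},[0,T]}\lesssim 1+|y_0|_{\cB^\mathfrak{D}_{-\eta_\mathfrak{D}}}+T^{\kappa}\,\|y\|_{\mathfrak{C},[0,T]} ,
\end{align*}
which is the Young counterpart of Corollary~\ref{bound:s}; a standard concatenation argument as in~\cite[Lemma~3.8, Theorem~3.9]{HN21} then produces a bound $\|y\|_{\infty,-\eta_\mathfrak{D},[0,T]}\le M_1\,r\,e^{M_2T}$ with $r:=1\vee|y_0|_{\cB^\mathfrak{D}_{-\eta_\mathfrak{D}}}$, ruling out finite-time blow-up, so the local solution extends to $[0,T]$ and uniqueness is inherited from the fixed point.

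I expect the main obstacle to be the middle step --- showing that the Young convolution gains enough spatial regularity to belong to $D(A)$ and that $A$ may be moved inside the integral. This is exactly where the hypothesis $\widetilde\gamma>1-\frac1{2p}$ enters: it is the sharp condition under which an index $\varepsilon_\mathfrak{D}$ with $1-\widetilde\gamma<\varepsilon_\mathfrak{D}<\frac1{2p}$ exists, $\mathfrak{D}$ being bounded only into $\cB^\mathfrak{D}_{\varepsilon_\mathfrak{D}}$ with $\varepsilon_\mathfrak{D}<\frac1{2p}$, whereas in the Neumann case the better mapping property $N:\widetilde{\cB}_\alpha\to\cB_\varepsilon$ with $\varepsilon<\frac12+\frac1{2p}$ makes the analogous inequality hold automatically for every $\gamma\in(\frac13,\frac12]$.
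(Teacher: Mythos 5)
Your proposal is correct and matches the paper's intended argument: the paper proves Theorem~\ref{globalDirichlet} in a single sentence (``Based on the arguments of Theorem~\ref{thm:global} we derive''), and your sketch carries out exactly that programme in the Young regime, correctly identifying the two decisive simplifications (no Gubinelli derivative is needed, and composition with a $C^2$ map yields a bound linear in $\|y\|_{\mathfrak{C}}$ rather than quadratic) as well as the precise role of $\widetilde\gamma>1-\frac{1}{2p}$ in allowing $1-\widetilde\gamma<\varepsilon_{\mathfrak{D}}<\frac{1}{2p}$ so that the Young convolution lands in $D(A)$. One small remark: the paper (and your opening paragraph) write $\sigma_{\mathfrak{D}}=\eta_{\mathfrak{D}}-\widetilde\gamma$, which would be negative under the stated constraints; comparing with the Neumann case $\sigma=\eta+\gamma$ and with your own later identification $\mathfrak{D}:\widetilde{\cB}^{\mathfrak{D}}_{-\eta_{\mathfrak{D}}+\delta_2-\widetilde\gamma}\to\cB^{\mathfrak{D}}_{1-\sigma_{\mathfrak{D}}}=\cB^{\mathfrak{D}}_{\varepsilon_{\mathfrak{D}}-\widetilde\gamma}$, the intended value is $\sigma_{\mathfrak{D}}=\eta_{\mathfrak{D}}+\widetilde\gamma$, and with that correction your argument is consistent throughout.
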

        \begin{remark}
         The theory developed in this work can be extended to time-dependent operators $A(t)$ generating parabolic evolution families $(U(t,s))_{t\geq s}$ as in~\cite{VeraarSchnaubelt}. Analogously, such operators satisfy for $t>s$ similar estimates to~\eqref{hg:1} and~\eqref{hg:2}, i.e. 
            \begin{align}
                |(U(t,s)-\Id) x|_{\alpha}&\lesssim |t-s|^\sigma |x|_{\alpha+\sigma},\\
                |U(t,s)x|_{\alpha+\sigma}&\lesssim |t-s|^{-\sigma}|x|_{\alpha}.
            \end{align}
            However the extrapolation scale has to be extended to cover this setting as well. Moreover, we believe that  this theory can be extended to higher-order differential operators, since our method is independent of the order of the operator. The only necessary ingredient is the existence of a continuous solution operator equivalent to the Neumann / Dirichlet operator. 
             
        \end{remark}

    \section{Random dynamical systems}\label{sec:rds}
        
        Based on our global well-posedness result, under suitable assumptions on the driving
        rough path, we are able to construct a random dynamical system corresponding to~\eqref{eq:intro}. To this aim, we introduce some concepts from the theory of random dynamical systems~\cite{Arnold}.
        \begin{definition}\label{mds} 
            Let $(\Omega,\mathcal{F},\mathbb{P})$ stand for a probability space and 
            $\theta:\mathbb{R}\times\Omega\rightarrow\Omega$ be a family of 
            $\mathbb{P}$-preserving transformations (i.e.~$\theta_{t}\mathbb{P}=
            \mathbb{P}$ for $t\in\mathbb{R}$) having the following properties:
            \begin{description}
                \item[(i)] The mapping $(t,\omega)\mapsto\theta_{t}\omega$ is 
                $(\mathcal{B}(\mathbb{R})\otimes\mathcal{F},\mathcal{F})$-measurable, where 
                $\mathcal{B}(\cdot)$ denotes the Borel $\sigma$-algebra;
                \item[(ii)] $\theta_{0}=\textnormal{Id}_{\Omega}$;
                \item[(iii)] $\theta_{t+s}=\theta_{t}\circ\theta_{s}$ for all 
                $t,s,\in\mathbb{R}$.
            \end{description}
            Then the quadrupel $(\Omega,\mathcal{F},\mathbb{P},(\theta_{t})_{t\in\mathbb{R}})$ 
            is called a metric dynamical system.
        \end{definition}

        \begin{definition}
            \label{rds} 
            A continuous random dynamical system on a separable Banach space $\cX$ over a metric dynamical 
            system $(\Omega,\mathcal{F},\mathbb{P},(\theta_{t})_{t\in\mathbb{R}})$ 
            is a mapping $$\varphi:[0,\infty)\times\Omega\times \cX\to \cX,
            \mbox{  } (t,\omega,x)\mapsto \varphi(t,\omega,x), $$
            which is $(\mathcal{B}([0,\infty))\otimes\mathcal{F}\otimes
            \mathcal{B}(\cX),\mathcal{B}(\cX))$-measurable and satisfies:
            \begin{description}
                \item[(i)] $\varphi(0,\omega,\cdot{})=\textnormal{Id}_{\cX}$ 
                for all $\omega\in\Omega$;
                \item[(ii)]$ \varphi(t+\tau,\omega,x)=
                \varphi(t,\theta_{\tau}\omega,\varphi(\tau,\omega,x)), 
                \mbox{ for all } x\in \cX, ~t,\tau\in[0,\infty),~\omega\in\Omega;$
                \item[(iii)] $\varphi(t,\omega,\cdot{}):\cX\to \cX$ is 
                continuous for all $t\in[0,\infty)$ and all $\omega\in\Omega$.
            \end{description}
        \end{definition}
        
        The second property in Definition~\ref{rds} is referred to as the 
        cocycle property. The generation of a random dynamical system from an It\^{o}-type stochastic partial differential equation (SPDE) has been a long-standing open problem, since Kolmogorov's theorem breaks down for random fields parametrized by infinite-dimensional Banach spaces.~As a consequence it is not known how to obtain a random dynamical system from an SPDE, since its solution is defined almost surely, which contradicts the cocycle property. In particular, this means that there are exceptional sets which depend on the initial condition and it is not clear how to define a random dynamical system if more than countably many exceptional sets occur. This issue does not occur in a pathwise approach.  Provided that global existence of solutions is ensured, rough path driven equations generate random
        dynamical systems if the driving rough path forms a rough path cocycle, as established in~\cite{BRiedelScheutzow}.\\
        
        The next concept describes a model of the driving noise. Let $(\Omega,\mathcal{F},\mathbb{P},(\theta_{t})_{t\in\mathbb{R}})$ be a metric dynamical system as in Definition~\ref{mds}. We say that 
        \begin{align*}
            \mathbf{X}=(X,\xx):\Omega\to C^{\gamma}_{\text{loc}}([0,\infty);\mathbb{R}^d) \times C^{2\gamma}_{\text{loc}}([0,\infty);\mathbb{R}^{d\times d}) 
        \end{align*}
        is a continuous ($\gamma$-H\"older) rough path cocycle~\cite{BRiedelScheutzow} if $\mathbf{X}|_{[0,T]}$ is a continuous $\gamma$-H\"older rough path for every $T>0$ and $\omega\in\Omega$ and the following cocycle property holds true for every $s,t\in[0,\infty)$ and $\omega\in\Omega$
        \begin{align*}
            &X_{s+t,s}(\omega)= X_t(\theta_s\omega),\\
            &\xx_{s+t,s}(\omega)=\xx_{t,0}(\theta_s\omega).
        \end{align*}
        According to~\cite[Section 2]{BRiedelScheutzow} rough path lifts of various stochastic processes define cocycles.
        These include Gaussian processes with stationary increments under certain
        assumption on the covariance function~\cite[Chapter 10]{FrizHairer} and particularly apply to the fractional Brownian motion with Hurst index $H>\frac{1}{4}$.\\
        
        Based on Theorem~\ref{thm:global} we immediately derive the existence of a random dynamical system associated to~\eqref{eq:intro}. Using a classical flow transformation, such a statement together with the existence of a random attractor was obtained for a system of SPDEs  
        with dynamical boundary conditions in~\cite{BDS}.
        
        \begin{theorem}\label{thm:rds}  Let $\mathbf{X}=(X,\xx)$ be a $\gamma$-H\"older rough path cocycle. 
            Under the assumptions of Theorem~\ref{thm:global}, the solution operator of~\eqref{eq:intro} generates a random dynamical system on $\cB_{-\eta}$. 
        \end{theorem}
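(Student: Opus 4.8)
The plan is to read the candidate cocycle $\varphi$ off the global solution map of Theorem~\ref{thm:global} --- recalling that by the reformulation carried out in Section~\ref{main} the solutions of~\eqref{eq:intro} coincide with those of~\eqref{eq:introNew} --- and then to verify the axioms of Definition~\ref{rds}. For $\omega\in\Omega$, $y_0\in\cB_{-\eta}$ and $t\geq 0$ we let $(y(\cdot\,;\omega,y_0),G(y(\cdot\,;\omega,y_0)))\in\cD^{2\gamma}_{X(\omega),-\eta}([0,t])$ be the unique solution of~\eqref{eq:introNew} driven by $\X(\omega)$ with $y_0(\omega;y_0)=y_0$, which exists on every bounded interval by Theorem~\ref{thm:global}, and we set $\varphi(t,\omega,y_0):=y_t(\omega;y_0)$. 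It then remains to check: $\varphi(0,\omega,\cdot)=\textnormal{Id}_{\cB_{-\eta}}$; the cocycle property; continuity of each $\varphi(t,\omega,\cdot)$; and joint measurability of $(t,\omega,y_0)\mapsto\varphi(t,\omega,y_0)$.

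Two of these are essentially free. The identity at $t=0$ is read off from~\eqref{eq:mildSolution}, while continuity in the initial datum follows from Theorem~\ref{thm:stability} applied with $\widetilde{\X}=\X$: for $\tfrac{1}{3}<\gamma'<\gamma$ and $y_0,\widetilde y_0$ in a bounded set of $\cB_{-\eta}$,
\[
\sup_{t\in[0,T]}\bigl|\varphi(t,\omega,y_0)-\varphi(t,\omega,\widetilde y_0)\bigr|_{-\eta}\leq d_{\gamma',\gamma,-\eta}\bigl(y,G(y),\widetilde y,G(\widetilde y)\bigr)\lesssim \bigl|y_0-\widetilde y_0\bigr|_{-\eta},
\]
so $\varphi(t,\omega,\cdot)$ is even Lipschitz on bounded sets. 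For measurability we use that Theorem~\ref{thm:stability} likewise yields continuity of the map $(y_0,\X)\mapsto y\in C([0,T];\cB_{-\eta})$ on sublevel sets of the $\gamma$-H\"older norm of $\X$; composing with the measurable map $\omega\mapsto\X(\omega)$, localizing over the $\mathcal F$-measurable sets on which $d_{\gamma,[0,T]}(\X(\cdot),0)\leq m$, $m\in\N$, and evaluating at $t$ gives the required $(\mathcal B([0,\infty))\otimes\mathcal F\otimes\mathcal B(\cB_{-\eta}),\mathcal B(\cB_{-\eta}))$-measurability.

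The hard part is the cocycle identity $\varphi(t+\tau,\omega,y_0)=\varphi(t,\theta_\tau\omega,\varphi(\tau,\omega,y_0))$ for $t,\tau\geq 0$, $\omega\in\Omega$. Fixing $\omega,\tau,y_0$, putting $y:=y(\cdot\,;\omega,y_0)$ and $u_t:=y_{\tau+t}$, we aim to show that $(u,G(u))$ is the unique solution of~\eqref{eq:introNew} on $[0,T]$ driven by the shifted rough path $\X(\theta_\tau\omega)$ with initial datum $y_\tau=\varphi(\tau,\omega,y_0)$; the cocycle identity then follows from uniqueness in Theorem~\ref{thm:global}. Evaluating~\eqref{eq:mildSolution} at $\tau+t$, splitting all integrals at $\tau$, and using $S_{\tau+t-r}=S_tS_{\tau-r}$ together with the restart/concatenation property of the rough convolution already exploited in the proof of Theorem~\ref{thm:global} (built on Theorem~\ref{integral} and Corollary~\ref{cor:higherreg}), one rewrites
\[
u_t=S_t y_\tau+\int_0^t S_{t-r}f(u_r)\,\txtd r+\int_\tau^{\tau+t}S_{\tau+t-r}A_{-\sigma}NF(y_r)\,\txtd \X_r .
\]
After the substitution $r\mapsto r-\tau$, the cocycle relations $X_{\tau+t,\tau+s}(\omega)=X_{t,s}(\theta_\tau\omega)$ and $\xx_{\tau+t,\tau+s}(\omega)=\xx_{t,s}(\theta_\tau\omega)$ identify the compensated Riemann sums defining the last integral with those of the rough convolution $\int_0^t S_{t-r}A_{-\sigma}NF(u_r)\,\txtd\X_r$ against the shifted rough path $\X(\theta_\tau\omega)$; here one uses that $(A_{-\sigma}NF(u),DG(u)\circ G(u))$ is a controlled rough path for $\X(\theta_\tau\omega)$ by Corollary~\ref{new:crp}, the controlled rough path property and the value of the rough integral being local in time and invariant under the reparametrization. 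Hence $u$ satisfies~\eqref{eq:mildSolution} with $\X(\theta_\tau\omega)$ and initial value $y_\tau$, which proves the claim. Since $\theta$ is $\P$-preserving and the pathwise construction yields a solution for \emph{every} $\omega\in\Omega$ with no exceptional set depending on $y_0$, the cocycle property holds for all $\omega$, and $\varphi$ is a continuous random dynamical system on $\cB_{-\eta}$ over $(\Omega,\mathcal F,\P,(\theta_t)_{t\in\R})$.

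The step we expect to be the main obstacle is this restart identity, namely proving
\[
\int_0^{\tau+t}S_{\tau+t-r}g_r\,\txtd\X_r=S_t\int_0^\tau S_{\tau-r}g_r\,\txtd\X_r+\int_\tau^{\tau+t}S_{\tau+t-r}g_r\,\txtd\X_r
\]
for $g=A_{-\sigma}NF(y)$ and then identifying the last term, after the time shift, with a rough integral against $\X(\theta_\tau\omega)$; this is where the interplay between the semigroup, the extrapolation operator $A_{-\sigma}$ and the sewing definition of the rough integral in Theorem~\ref{integral} must be tracked carefully. Once this is in place, the statement is an instance of the abstract principle of~\cite{BRiedelScheutzow} --- a globally well-posed rough equation driven by a rough path cocycle generates a random dynamical system --- now verified in the present two-scale extrapolation framework.
\qed
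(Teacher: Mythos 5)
Your proposal is correct and follows essentially the same route as the paper: the cocycle property is obtained by shifting the mild formulation using the semigroup property and the shift/restart property of the rough integral (the paper cites \cite[Corollary 4.5]{HN20} for this) together with uniqueness of the global solution from Theorem~\ref{thm:global}, while continuity and joint measurability come from continuous dependence on the initial datum and the driving rough path. The only cosmetic difference is that the paper establishes measurability by approximating with smooth rough paths and then invoking \cite{CastaingValadier}, whereas you localize via the stability estimate of Theorem~\ref{thm:stability} directly; both arguments are standard and equivalent here.
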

        \begin{proof}
            In order to verify the cocycle property, regarding Theorem~\ref{thm:global}, we let $t,\tau\in\R_{+}$, recall that $G=A_{-\sigma}NF$  and compute
            \begin{align*}
                \varphi({t+\tau},\omega,y_0):=y_{t+\tau} &=S_{t+\tau}y_0 + \int\limits_{0}^{t+\tau} S_{t+\tau-r} 
                f(y_{r}) ~\txtd r + \int\limits_{0}^{t+\tau} S_{t+\tau-r} 
                G(y_{r}) ~\txtd \mathbf{X}_{r}\\
                & = S_{t} y_{\tau} + \int\limits_{0}^{t} S_{t-r} f(y_{r+\tau}) 
                ~\txtd r + \int\limits_{0}^{t}S_{t-r}G(y_{r+\tau}) ~\txtd 
                \theta_{\tau}\mathbf{X}_{r}.
            \end{align*}
            Note that the shift property of the rough integral~\eqref{Gintegral} is immediate, see \cite[Corollary 4.5]{HN20}. The $(\mathcal{B}([0,\infty))\otimes\mathcal{F}\otimes\mathcal{B}(\cB_{-\eta}), \mathcal{B}(\cB_{-\eta}))$-measurability of $\varphi$ follows by well-known arguments. More precisely, one considers a sequence of (classical) solutions $(y^{n},(y^{n})')_{n\in\mathbb{N}}$ of~\eqref{eq:intro} corresponding to smooth approximations $(X^{n},\mathbb{X}^{n})_{n\in\mathbb{N}}$ of $(X,\mathbb{X})$. Obviously, the mapping $(t,X,\xi)\mapsto y^{n}_{t}$ is $(\mathcal{B}([0,T])\otimes\mathcal{F}\otimes\mathcal{B}(\cB_{-\eta}), \mathcal{B}(\cB_{-\eta}))$-measurable for any $T>0$. Since $y$ continuously depends on the rough input $X=(X,\mathbb{X})$, according to \cite[Lemma 3.12]{GHairer}, one concludes that $\lim\limits_{n\to\infty}y^{n}_t=y_{t}$. This gives the measurability of $y$ with respect to $\mathcal{F}\otimes \mathcal{B}(\cB_{-\eta})$. Due to the time-continuity of $y$, we obtain by~\cite[Chapter~3]{CastaingValadier} the $(\mathcal{B}([0,T])\otimes\mathcal{F}\otimes\mathcal{B}(\cB_{-\eta}), \mathcal{B}(\cB_{-\eta}))$-measurability of the mapping $(t,\omega,\xi)\mapsto y_{t}$ for any $t\geq 0$.
            \qed
        \end{proof}
        
        \begin{remark}
            Naturally, based on the statement of Theorem~\ref{globalDirichlet}, we obtain a random dynamical system in the Young case for the SPDE~\eqref{eq:intro} with multiplicative Dirichlet boundary noise.
        \end{remark}
        
    \section{Examples}\label{examples}
        Here we provide an application of our theory, specifying concrete examples for $A$ and $F$. Since the condition on the drift term $f$ is less restrictive,  examples such as polynomial nonlinearities or Nemytskii type operators are possible. Therefore we focus here on examples for the diffusion coefficient $F$. 
        In both examples, we consider $p=2$ and the formal operators are augmented by either Neumann or Dirichlet boundary conditions
        \begin{align}\label{formalOp}
            \cA u:= \sum_{i,j=1}^d \partial_i \left(a_{ij}\partial_j \right)u +bu,\quad \cC u:= \sum_{i,j=1}^d \nu_i\gamma_\partial a_{ij}\partial _ju, \quad \widetilde{\cC} u = \gamma_\partial u,
        \end{align}
        where the coefficients $a_{ij}, b:\overline{\cO}\to \R$ are smooth,  $(a_{ij})_{i,j=1}^d$ is symmetric and uniform elliptic, meaning that there exists some constant $k>0$ such that for all $\xi \in \R^d$ and $x\in \overline{\cO}$ we have
        \begin{align*}
            \sum_{i,j=1}^d a_{ij}(x)\xi_i \xi_j\geq k \abs{\xi}^2.
        \end{align*}
        In this case $(\cA,\cC)$ and $(\cA,\widetilde{\cC})$ are normally elliptic boundary value problems and the corresponding $L^2(\cO)$-realization $A$, after a possible shift, has bounded imaginary powers \cite[Theorem 2.3]{DDHPV}.
        \begin{example}(Young case and Dirichlet boundary noise)\\
            We consider Dirichlet case $(\cA,\widetilde{\cC})$ and the respective realization $A$. For the regularity of the noise, we take $\tilde{\gamma}\in (\frac{3}{4},1)$, such that the condition $\varepsilon_\mathfrak{D}>1-\widetilde{\gamma}$ is satisfied. In order to verify the Assumption~\ref{ass3}, we first investigate the extrapolation spaces. Recall that for $\beta>-\frac{3}{2}$ they are given by 
            \begin{align*}
            \cB^\mathfrak{D}_{\frac{\beta}{2}}:=
            \begin{cases}
                \{ u\in H^{\beta}(\cO) : \gamma_\partial u =0 \},& \beta>\frac{1}{2}\\
                H^{\beta}(\cO) ,& -\frac{3}{2}<\beta<\frac{1}{2}\\ 
                \{ u\in H^{-\beta}(\cO) : \gamma_\partial u =0 \}^\prime, & \beta<-\frac{3}{2}.
            \end{cases}
            \end{align*}
          Regarding Theorem~\ref{globalDirichlet} and since $-\eta_\mathfrak{D}-\tilde{\gamma}>2\tilde{\gamma}>-2$, we are interested in $\cB^\mathfrak{D}_{-2}$. Now, it is known that $\cB^\mathfrak{D}_{-2}$ is given by the dual space of $D(A^2)$, see \cite[Theorem V.1.5.12]{Amann2}. But since also $H^4_0(\cO)\hookrightarrow D(A^2)$ and $(H^4_0(\cO))^\prime=H^{-4}(\cO)$ we can continuously embed $\cB^\mathfrak{D}_{-2}$ into $H^{-4}(\cO)$\tim{, where $H^4_0(\cO):=H^4(\cO)\cap H^1_0(\cO)$}. Based on the above considerations, it is sufficient to find a linear continuous mapping from $H^{-4}(\cO)$ to $H^{\tilde{\delta}+1}(\partial \cO)=B^{\tilde{\delta}+1}_{2,2}(\partial \cO)$ for a $\tilde{\delta}>0$. An example for this is a slightly adapted lifting operator. More precisely, we set $\nu_\mathfrak{D}:=-\frac{11}{2}-\tilde{\delta}$ and consider
            \begin{align*}
                \Lambda^{\nu_\mathfrak{D}}:H^{-4}(\R^d)\to H^{\tilde{\delta}+1+\frac{1}{2}}(\R^d),f\mapsto\cF^{-1}(1+\abs{\cdot}^2)^{\frac{\nu_\mathfrak{D}}{2}}\cF f,
            \end{align*}
            where $\cF$ is the Fourier transform. Such examples occur in the theory of pseudo-differential operators, see for example \cite[Theorem 18.1.13]{Hor}.  Furthermore, there exist linear and continuous  operators $e_\cO:H^{-4}(\cO)\to H^{-4}(\R^d)$ and  $r_\cO:H^{\tilde{\delta}+1+\frac{1}{2}}(\R^d)\to H^{\tilde{\delta}+1+\frac{1}{2}}(\cO)$. These are called {\em retraction/co-retraction}, see \cite[Theorem 4.2.2]{Triebel78}. Since the trace operator $\gamma_\cO:H^{\tilde{\delta}+1+\frac{1}{2}}(\cO)\to H^{\tilde{\delta}+1}(\partial \cO)$ is also linear and continuous, we conclude that the operator $F:=\gamma_{\partial} r_\cO \Lambda^{\nu_\mathfrak{D}} e_\cO$ fulfils the Assumption \ref{ass2} 2). Moreover, since $F$ is linear and bounded, the same holds true for $G=A_{-\sigma_\mathfrak{D}} \mathfrak{D}F$ and Theorem \ref{globalDirichlet} provides a global-in-time solution in this case.
        \end{example}
        \begin{example}(Rough Neumann boundary noise)\\
        Returning to the rough Neumann boundary noise, we consider $(\cA,\cC)$ and the respective realization $A$. 
        We recall the characterization of the extrapolation spaces in this case
        \begin{align*}
            \cB_{\frac{\beta}{2}}:=H^\beta_\cC(\cO):=
            \begin{cases}
                \{ u\in H^{\beta}(\cO) : \cC u =0 \},& \beta>\frac{3}{2}\\
                H^{\beta}(\cO) ,& -\frac{1}{2}<\beta<\frac{3}{2},\\ 
                \left(H^{-\beta}(\cO)\right)^\prime, & -\frac{3}{2}<\beta\leq-\frac{1}{2}\\
                \{ u\in H^{-\beta}(\cO) : \cC u =0 \}^\prime, & \beta<-\frac{3}{2}.
            \end{cases}.
        \end{align*}
        Keeping the results obtained in Section~\ref{main} in mind, we are interested in the case $\beta=2(-\eta-2\gamma)$. Since $-\eta >-\gamma$ we have $-\eta-2\gamma>-\frac{3}{2}>-2$ and therefore $\cB_{-\eta-2\gamma}\hookrightarrow \cB_{-2}$.
        Similar to the example before, it holds that $\cB_{-2}\hookrightarrow H^{-4}(\cO)$. Now we define $F:=\gamma_{\partial} r_\cO \Lambda^{\nu} e_\cO$ in the same way as before, but set $\nu:=-\frac{9}{2}-\tilde{\delta}$ for some $\widetilde{\delta}>0$. In this case $F$ maps $H^{-4}(\cO)$ into $H^{\tilde{\delta}}(\partial \cO)$ and fulfils the Assumption \ref{ass2} 2). Therefore Theorem \ref{local} provides a local-in-time solution in this case. Moreover, since $F$ is linear and bounded, the same holds true for $G=A_{-\sigma} NF$. This means that $F$ satisfies the Assumption \ref{ass2} 3) and Theorem~\ref{thm:global} entails a global-in-time solution

        \end{example}

        \begin{remark}
            Before we conclude, 
            we compare our results to the ones obtained in~\cite{DuncanMaslowski, DuncanMaslowski2} for {\em additive} infinite-dimensional fractional noise in Hilbert spaces. Similar results have been derived also in Banach spaces in~\cite[Section 5.2]{CMO}. To this aim, let $U$ and $V$ stand for two separable Hilbert spaces. For a $U$-cylindrical fractional Brownian motion $(B^H_t)_{t\geq 0}$ it is known (\cite[Corollary 3.1]{DuncanMaslowski} for $H>\frac{1}{2}$ and \cite[Corollary 11.9]{DuncanMaslowski2} for $H<\frac{1}{2}$) that the stochastic convolution 
            $$\int_0^t S_{t-r}\Phi ~\txtd B^H_r ~~\text{ is well-defined if } ~~\|S(t)\Phi\|_{\cL_2(U,V)}<t^{-\theta},$$
            where $\cL_2(U,V)$ denotes the space of Hilbert-Schmidt operators from $U$ to $V$, $\Phi\in\cL(U,V)$ and $\theta<H$.  Consequently, in order to incorporate boundary noise given by an  $U:=L^2(\partial\cO)$-cylindrical fractional Brownian motion with covariance operator $Q^{\frac{1}{2}}\in \cL_2(U)$, one has to verify that
            \begin{align*}
                \| A S(t) N Q^{\frac{1}{2}}\|_{\cL_2(U,V)}\leq t^{-\theta},
            \end{align*}
            where $V:=L^2(\cO)$ and $\theta<H$. Setting $\cB_\varepsilon:=D(A^\varepsilon)$ as in Section~\ref{main}, one infers that
            \begin{align*}
                \| A S(t) N Q^{\frac{1}{2}}\|_{\cL_2(U,V)}\leq \|A S(t)\|_{\cL(\cB_\varepsilon,V)}\|N\|_{\cL(U,\cB_\varepsilon)}\|Q^{\frac{1}{2}}\|_{\cL_2(U)} \leq c t^{\varepsilon-1} .
            \end{align*}
            This leads to the condition $\varepsilon>1-H$. Since $\varepsilon<\frac{3}{4}$ (recall Corollary~\ref{cor:RoughConvInDomain}), this means that it is possible to deal with fractional Neumann boundary noise for $H>\frac{1}{4}$.
            The condition $\varepsilon>1-H$ is consistent with the result of Corollary~\ref{cor:RoughConvInDomain}, which is established in the more general setting of rough path theory. Moreover, due to the multiplicative structure of the noise, the theory of extrapolation operators is additionally required. 
        \end{remark}

\section*{Conflict of interest statement}
The authors have no conflicts of interest to declare. All co-authors have seen and agree with the contents of the manuscript and there is no financial interest to report. 
\section*{Data availability}
No data was used for the research described in the article.

\end{document}